\newtheorem{theorem}{Theorem}
\theoremstyle{plain}
\newtheorem{corollary}{Corollary}
\newtheorem{definition}{Definition}
\newtheorem{example}{Example}
\newtheorem{lemma}{Lemma}
\newtheorem{proposition}{Proposition}
\newtheorem{remark}{Remark}
\numberwithin{equation}{section}
\begin{document}
\title{ON SLANT RIEMANNIAN SUBMERSIONS FOR COSYMPLECTIC MANIFOLDS}
\author{I. K\"{u}peli Erken}
\address{Art and Science Faculty,Department of Mathematics, Uludag
University, 16059 Bursa, TURKEY}
\email{iremkupeli@uludag.edu.tr}
\author{C. Murathan}
\address{Art and Science Faculty,Department of Mathematics, Uludag
University, 16059 Bursa, TURKEY}
\email{cengiz@uludag.edu.tr}
\date{14.11.2013}
\subjclass[2000]{Primary  53C43, 53C55; Secondary 53D15}
\keywords{Riemannian submersion, cosymplectic manifold, slant submersion\\
\ \ \ This paper is supported by Uludag University research project
(KUAP(F)-2012/57).}

\begin{abstract}
In this paper we introduce slant Riemannian submersions from cosymplectic
manifolds onto Riemannian manifolds. We obtain some results on slant
Riemannian submersions of a cosymplectic manifolds. We also give examples
and inequalities between the scalar curvature and squared mean curvature of
fibres of such slant submersions according to characteristic vector field is
vertical or horizontal.
\end{abstract}

\maketitle

\section{\textbf{Introduction}}

Riemannian submersions were introduced in the sixties by B. O'Neill and A.
Gray (see \cite{BO1}, \cite{GRAY}) as a tool to study the geometry of a
Riemannian manifold with an additional structure in terms of certain
components, that is, the fibers and the base space. The Riemannian
submersions are of great interest both in mathematics and physics, owing to
their applications in the Yang-Mills theory (\cite{BL}, \cite{WATSON}),
Kaluza-Klein theory (\cite{BOL}, \cite{IV}), supergravity and superstring
theories (\cite{IV2}, \cite{MUS}), etc. Riemannian submersions were
considered between almost complex manifolds by Watson in \cite{WAT} under
the name of almost Hermitian submersion. For Riemannian submersions between
almost contact manifolds, Chinea \cite{CHI} studied under the name of almost
contact submersions. \c{S}ahin \cite{SAHIN} introduced a kind of submersion
which was defined from almost Hermitian manifolds to any Rieamannian
manifolds. Recently there are several kinds of submersions according to the
conditions on it: e.g., contact-complex submersion \cite{IANUS},
quaternionic submersion \cite{IANUS2}, almost $h$-slant submersion and $h$%
-slant submersion \cite{PARK1}, semi-invariant submersion \cite{SAHIN2}, $h$%
-semi-invariant submersion \cite{PARK2}, etc.

On the other hand, The study of slant submanifolds was initiated by B. Y.
Chen in \cite{CHEN}. In \cite{SAHIN1}, B. \c{S}ahin studied slant
submersions from almost Hermitian manifold to a Riemannian manifold and
generalized his results which was given in \cite{SAHIN}.

In this paper, we study slant Riemannian submersions from cosymplectic
manifolds.We obtain some results on slant Riemannian submersions of a
cosymplectic manifolds.

The paper is organized in the following way. In section 2, we recall some
notions needed for this paper. Section 3 deals with cosymplectic manifolds.
In section 4, we give definition of slant Riemannian submersions and
introduce slant Riemannian submersions from cosymplectic manifolds onto
Riemannian manifolds. We survey main results of slant submersions defined on
cosymplectic manifolds and obtain some interesting properties about them. We
construct examples of slant submersions such that characteristic vector
field $\xi $ is vertical or horizontal. We give a sufficient condition for a
slant Riemannian submersion from cosymplectic manifolds onto Riemannian
manifolds to be harmonic. Moreover, we investigate the geometry of leaves of
\ $(\ker F_{\ast })$ and $(\ker F_{\ast })^{\bot }$. Here, we find a
necessary and sufficient condition for a slant Riemannian submersion to be
totally geodesic. We give sharp inequalities between the scalar curvature
and squared mean curvature of fibres such that characteristic vector field $%
\xi $ is vertical or horizontal. Moreover, we know that anti-invariant
submersions are special slant submersions with slant angle $\theta =\frac{%
\pi }{4}.$ We investigated such a submersions in \cite{KM}. Especially, we
will give some additional results for anti-invariant submersions from a
cosymplectic manifold to a Riemannian manifold such that $(\ker F_{\ast
})^{\bot }$ =$\phi ($ $\ker (F_{\ast }))\oplus \{\xi \}$.

\section{\textbf{Riemannian Submersions}}

In this section we recall several notions and results which will be needed
throughout the paper.

Let $(M,g_{M})$ be an $m$-dimensional Riemannian manifold , let $(N,g_{N})$
be an $n$-dimensional Riemannian manifold. A Riemannian submersion is a
smooth map $F:M\rightarrow N$ which is onto and satisfies the following
axioms:

$S1$. $F$ has maximal rank.

$S2$. The differential $F_{\ast }$ preserves the lenghts of horizontal
vectors.

The fundamental tensors of a submersion were defined by O'Neill (\cite{BO1},%
\cite{BO2}). They are $(1,2)$-tensors on $M$, given by the formula:%
\begin{eqnarray}
\mathcal{T}(E,F) &=&\mathcal{T}_{E}F=\mathcal{H}\nabla _{\mathcal{V}E}%
\mathcal{V}F+\mathcal{V}\nabla _{\mathcal{V}E}\mathcal{H}F,  \label{AT1} \\
\mathcal{A}(E,F) &=&\mathcal{A}_{E}F=\mathcal{V}\nabla _{\mathcal{H}E}%
\mathcal{H}F+\mathcal{H}\nabla _{\mathcal{H}E}\mathcal{V}F,  \label{AT2}
\end{eqnarray}%
for any vector field $E$ and $F$ on $M.$ Here $\nabla $ denotes the
Levi-Civita connection of $(M,g_{M})$. These tensors are called
integrability tensors for the Riemannian submersions. Note that we denote
the projection morphism on the distributions ker$F_{\ast }$ and (ker$F_{\ast
})^{\perp }$ by $\mathcal{V}$ and $\mathcal{H},$ respectively. The following
\ Lemmas are well known (\cite{BO1},\cite{BO2}).

\begin{lemma}
\label{T}For any $U,W$ vertical and $X,Y$ horizontal vector fields, the
tensor fields $\mathcal{T}$, $\mathcal{A}$ satisfy:%
\begin{eqnarray}
i)\mathcal{T}_{U}W &=&\mathcal{T}_{W}U,  \label{TUW} \\
ii)\mathcal{A}_{X}Y &=&-\mathcal{A}_{Y}X=\frac{1}{2}\mathcal{V}\left[ X,Y%
\right] .  \label{TUW2}
\end{eqnarray}
\end{lemma}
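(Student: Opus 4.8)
The plan is to reduce each fundamental tensor to a single term using the definitions \eqref{AT1}--\eqref{AT2}, and then to exploit the torsion-freeness and metric compatibility of the Levi-Civita connection $\nabla$ together with the geometry of the fibers.

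For (i), I would first note that since $U,W$ are vertical we have $\mathcal{V}U=U$, $\mathcal{V}W=W$ and $\mathcal{H}U=\mathcal{H}W=0$, so \eqref{AT1} collapses to $\mathcal{T}_{U}W=\mathcal{H}\nabla _{U}W$ and likewise $\mathcal{T}_{W}U=\mathcal{H}\nabla _{W}U$. The asserted symmetry $\mathcal{T}_{U}W=\mathcal{T}_{W}U$ is therefore equivalent to $\mathcal{H}(\nabla _{U}W-\nabla _{W}U)=0$, that is, to $\mathcal{H}[U,W]=0$. This last equality holds because $\ker F_{\ast }$ is the tangent distribution of the fibers $F^{-1}(q)$, which are submanifolds of $M$; hence $\ker F_{\ast }$ is integrable and the bracket of two vertical fields is again vertical. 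Torsion-freeness of $\nabla$ then finishes this part.

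For (ii), I would start the same way: for horizontal $X,Y$ the definition \eqref{AT2} reduces to $\mathcal{A}_{X}Y=\mathcal{V}\nabla _{X}Y$ and $\mathcal{A}_{Y}X=\mathcal{V}\nabla _{Y}X$, so that $\mathcal{A}_{X}Y-\mathcal{A}_{Y}X=\mathcal{V}[X,Y]$ is immediate from torsion-freeness. The whole statement would then follow once I establish the single identity $\mathcal{A}_{X}Y=\tfrac{1}{2}\mathcal{V}[X,Y]$, since this gives $\mathcal{A}_{Y}X=\tfrac{1}{2}\mathcal{V}[Y,X]=-\mathcal{A}_{X}Y$. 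Because $\mathcal{A}$ is $C^{\infty }(M)$-bilinear in its two arguments, it suffices to verify this identity when $X,Y$ are basic (horizontal lifts of vector fields on $N$). For such fields I would test against an arbitrary vertical $V$ and expand $2g_{M}(\nabla _{X}Y,V)$ by the Koszul formula: the terms $Xg_{M}(Y,V)$ and $Yg_{M}(V,X)$ vanish because horizontal and vertical fields are $g_{M}$-orthogonal, the term $Vg_{M}(X,Y)$ vanishes because $g_{M}(X,Y)$ is constant along the fibers for basic $X,Y$, and the two bracket terms $g_{M}([Y,V],X)$ and $g_{M}([X,V],Y)$ vanish because the bracket of a basic field with a vertical field is vertical. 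What survives is $2g_{M}(\nabla _{X}Y,V)=g_{M}([X,Y],V)$, and since this holds for every vertical $V$ we conclude $\mathcal{V}\nabla _{X}Y=\tfrac{1}{2}\mathcal{V}[X,Y]$, as required.

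The reduction of the definitions and the Koszul bookkeeping are routine; the substantive input is the trio of facts about basic vector fields, namely that $g_{M}(X,Y)$ is fiber-constant and that $[X,V]$ remains vertical, together with the tensoriality of $\mathcal{A}$ that licenses passing to basic representatives. I expect the main obstacle to be justifying these facts cleanly, since they encode the submersion structure and follow from $F$ having maximal rank and each horizontal lift being $F$-related to its projection, rather than any computation.
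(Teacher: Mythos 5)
Your proof is correct. The paper does not prove this lemma at all --- it states it as well known and cites O'Neill \cite{BO1} and \cite{BO2} --- and your argument (collapsing the definitions \eqref{AT1}--\eqref{AT2} to single terms, using integrability of the fibers and torsion-freeness for $\mathcal{T}$, and tensoriality plus the Koszul formula on basic fields for $\mathcal{A}$) is precisely the standard argument from O'Neill's original paper, with the supporting facts about basic fields being items (i) and (iv) of the paper's Lemma \ref{g}.
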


It is easy to see that $\mathcal{T}$ $\ $is vertical, $\mathcal{T}_{E}=%
\mathcal{T}_{\mathcal{V}E}$ and $\mathcal{A}$ is horizontal, $\mathcal{A=A}_{%
\mathcal{H}E}$.

For each $q\in N,$ $F^{-1}(q)$ is an $(m-n)$ dimensional submanifold of $M$.
The submanifolds $F^{-1}(q),$ $q\in N,$ are called fibers. A vector field on 
$M$ is called vertical if it is always tangent to fibers. A vector field on $%
M$ is called horizontal if it is always orthogonal to fibers. A vector field 
$X$ on $M$ is called basic if $X$ is horizontal and $F$-related to a vector
field $X$ on $N,$ i. e., $F_{\ast }X_{p}=X_{\ast F(p)}$ for all $p\in M.$

\begin{lemma}
\label{g}Let $F:(M,g_{M})\rightarrow (N,g_{N})$ be a Riemannian submersion.
If $\ X,$ $Y$ are basic vector fields on $M$, then:
\end{lemma}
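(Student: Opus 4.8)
The plan is to recognize this as the classical O'Neill lemma, whose conclusions assert that for basic $X,Y$: (i) $g_{M}(X,Y)=g_{N}(X_{\ast },Y_{\ast })\circ F$; (ii) $\mathcal{H}[X,Y]$ is basic and $F$-related to $[X_{\ast },Y_{\ast }]$; (iii) $\mathcal{H}\nabla _{X}Y$ is basic and $F$-related to $\nabla _{X_{\ast }}^{N}Y_{\ast }$, where $\nabla ^{N}$ is the Levi-Civita connection of $(N,g_{N})$; and (iv) $[X,V]$ is vertical for every vertical $V$. The two structural facts I would rely on throughout are that a basic field $X$ is by definition $F$-related to its projection $X_{\ast }$, and that $F$-relatedness is preserved under the Lie bracket, so that $F_{\ast }[E_{1},E_{2}]=[F_{\ast }E_{1},F_{\ast }E_{2}]\circ F$ whenever $E_{i}$ is $F$-related to $F_{\ast }E_{i}$.

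For (i), I would simply apply axiom $S2$: since $F_{\ast }$ preserves the lengths of horizontal vectors, polarization shows it preserves their inner products, and evaluating at $p\in M$ gives $g_{M}(X,Y)_{p}=g_{N}(F_{\ast }X_{p},F_{\ast }Y_{p})=g_{N}(X_{\ast },Y_{\ast })_{F(p)}$. For (ii), since $X$ is $F$-related to $X_{\ast }$ and $Y$ to $Y_{\ast }$, the bracket is $F$-related to $[X_{\ast },Y_{\ast }]$; as $F_{\ast }$ annihilates the vertical part, $F_{\ast }(\mathcal{H}[X,Y])=F_{\ast }[X,Y]=[X_{\ast },Y_{\ast }]\circ F$, which is exactly the assertion that $\mathcal{H}[X,Y]$ is basic. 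For (iv), a vertical $V$ satisfies $F_{\ast }V=0$, i.e. $V$ is $F$-related to the zero field, so $[X,V]$ is $F$-related to $[X_{\ast },0]=0$, forcing $F_{\ast }[X,V]=0$ and hence $[X,V]$ vertical.

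The substantive step is (iii), and here the plan is to use the Koszul formula on both manifolds. Taking a third basic field $Z$ and writing $g_{M}(\nabla _{X}Y,Z)=g_{M}(\mathcal{H}\nabla _{X}Y,Z)$ (legitimate since $Z$ is horizontal), I would expand by Koszul,
\begin{eqnarray*}
2g_{M}(\nabla _{X}Y,Z) &=&Xg_{M}(Y,Z)+Yg_{M}(X,Z)-Zg_{M}(X,Y) \\
&&+g_{M}([X,Y],Z)-g_{M}([X,Z],Y)-g_{M}([Y,Z],X).
\end{eqnarray*}
Each inner product on the right is converted by (i) into a function pulled back from $N$, and each derivative such as $Xg_{M}(Y,Z)$ becomes $\left( X_{\ast }g_{N}(Y_{\ast },Z_{\ast })\right) \circ F$ because $X$ is $F$-related to $X_{\ast }$; each bracket term is handled with (ii), the vertical parts dropping out against the horizontal fields $X,Y,Z$. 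The right-hand side thereby equals the corresponding Koszul expression for $2g_{N}(\nabla _{X_{\ast }}^{N}Y_{\ast },Z_{\ast })$ pulled back by $F$. Using (i) once more to identify $g_{M}(\mathcal{H}\nabla _{X}Y,Z)$ with $g_{N}(F_{\ast }\mathcal{H}\nabla _{X}Y,Z_{\ast })\circ F$ and letting $Z$ range over all basic fields yields $F_{\ast }(\mathcal{H}\nabla _{X}Y)=\nabla _{X_{\ast }}^{N}Y_{\ast }$.

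I expect the bookkeeping in (iii) to be the only real obstacle: one must check carefully that the bracket terms $[X,Z]$ and $[Y,Z]$ may be replaced by their horizontal parts when paired against the horizontal fields $Y,X$, and that the functions $g_{M}(Y,Z)$, etc., are genuinely constant along fibres so that their derivatives descend to $N$. Both points follow from (i) and (ii), but they are where the argument must be made precise. The remaining parts (i), (ii), (iv) are immediate from the definitions and the naturality of the bracket.
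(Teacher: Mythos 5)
Your proposal is correct, and all four parts are proved by the standard classical arguments. The paper itself offers no proof of this lemma --- it is stated as ``well known'' with a citation to O'Neill --- and your route (polarization of axiom $S2$ for (i), naturality of the Lie bracket under $F$-relatedness for (ii) and (iv), and the Koszul formula combined with (i), (ii) and the pointwise isometry $F_{\ast}|_{\mathcal{H}_{p}}$ for (iii)) is precisely the argument found in the cited references, so there is nothing to compare beyond noting that your treatment of the two delicate points in (iii) (constancy of $g_{M}(Y,Z)$ along fibres and discarding vertical parts of brackets against horizontal fields) is handled correctly.
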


$i)$ $g_{M}(X,Y)=g_{N}(X_{\ast },Y_{\ast })\circ F,$

$ii)$ $\mathcal{H}[X,Y]$ is basic, $F$-related to $[X_{\ast },Y_{\ast }]$,

$iii)$ $\mathcal{H}(\nabla _{X}Y)$ is basic vector field corresponding to $%
\nabla _{X_{\ast }}^{^{\ast }}Y_{\ast }$ where $\nabla ^{\ast }$ is the
connection on $N.$

$iv)$ for any vertical vector field $V$, $[X,V]$ is vertical.

Moreover, if $X$ is basic and $U$ is vertical then $\mathcal{H}(\nabla
_{U}X)=\mathcal{H}(\nabla _{X}U)=\mathcal{A}_{X}U.$ On the other hand, from (%
\ref{AT1}) and (\ref{AT2}) we have%
\begin{eqnarray}
\nabla _{V}W &=&\mathcal{T}_{V}W+\hat{\nabla}_{V}W  \label{1} \\
\nabla _{V}X &=&\mathcal{H\nabla }_{V}X+\mathcal{T}_{V}X  \label{2} \\
\nabla _{X}V &=&\mathcal{A}_{X}V+\mathcal{V}\nabla _{X}V  \label{3} \\
\nabla _{X}Y &=&\mathcal{H\nabla }_{X}Y+\mathcal{A}_{X}Y  \label{4}
\end{eqnarray}%
for $X,Y\in \Gamma ((\ker F_{\ast })^{\bot })$ and $V,W\in \Gamma (\ker
F_{\ast }),$ where $\hat{\nabla}_{V}W=\mathcal{V}\nabla _{V}W.$ On any fibre 
$F^{-1}(q),$ $q\in N,\hat{\nabla}$ coincides with the Levi-Civita connection
with respect to the metric induced by $g_{M}.$ This induced metric on fibre $%
F^{-1}(q)$ is denoted by $\hat{g}.$

Notice that $\mathcal{T}$ \ acts on the fibres as the second fundamental
form of the submersion and restricted to vertical vector fields and it can
be easily seen that $\mathcal{T}=0$ is equivalent to the condition that the
fibres are totally geodesic. A Riemannian submersion is called a Riemannian
submersion with totally geodesic fiber if $\mathcal{T}$ vanishes
identically. Let $U_{1},...,U_{m-n}$ be an orthonormal frame of $\Gamma
(\ker F_{\ast }).$ Then the horizontal vector field $H$ $=\frac{1}{m-n}%
\dsum\limits_{j=1}^{m-n}\mathcal{T}_{U_{j}}U_{j}$ is called the mean
curvature vector field of the fiber. If \ $H$ $=0$ the Riemannian submersion
is said to be minimal. A Riemannian submersion is called a Riemannian
submersion with totally umbilical fibers if 
\begin{equation}
\mathcal{T}_{U}W=g_{M}(U,W)H  \label{4a}
\end{equation}%
for $U,W\in $ $\Gamma (\ker F_{\ast })$. For any $E\in \Gamma (TM),\mathcal{T%
}_{E\text{ }}$and $\mathcal{A}_{E}$ are skew-symmetric operators on $(\Gamma
(TM),g_{M})$ reversing the horizontal and the vertical distributions. By
Lemma \ref{T} horizontally distribution $\mathcal{H}$ is integrable if and
only if \ $\mathcal{A=}0$. For any $D,E,G\in \Gamma (TM)$ one has%
\begin{equation}
g(\mathcal{T}_{D}E,G)+g(\mathcal{T}_{D}G,E)=0,  \label{4b}
\end{equation}%
\begin{equation}
g(\mathcal{A}_{D}E,G)+g(\mathcal{A}_{D}G,E)=0.  \label{4c}
\end{equation}%
The tensor fields $\mathcal{A}$, $\mathcal{T}$ and their covariant
derivatives play a fundamental role in expressing the Riemannian curvature $%
R $ of $(M,g)$. By (\ref{1}) and (\ref{2}) B.O'Neill (\cite{BO1}) gave%
\begin{equation}
R(S,W,V,U)=g(R(S,W)V,U)=\hat{R}(S,W,V,U)+g(\mathcal{T}_{U}W,\mathcal{T}%
_{V}S)-g(\mathcal{T}_{V}W,\mathcal{T}_{U}S),  \label{4D}
\end{equation}%
where $\hat{R}$ is Riemannian curvature tensor of any fibre $(F^{-1}(q),\hat{%
g}_{q})$. Precisely, if $\{U,V\}$ is orthonormal basis of the vertical $2$%
-plane then the equation (\ref{4D}) allow following%
\begin{equation}
K(U\wedge V)=\hat{K}(U\wedge V)+\parallel \mathcal{T}_{U}V\parallel ^{2}-g(%
\mathcal{T}_{U}U,\mathcal{T}_{V}V),  \label{4E}
\end{equation}%
where $K$ and $\hat{K}$ denote the sectional curvature of $M$ and fibre $%
F^{-1}(q)$ respectively. Moreover, the following formula stated in (\cite%
{BO1})%
\begin{eqnarray}
R(Y,W,V,X) &=&g((\nabla _{X}\mathcal{T})(V,W),Y)+g((\nabla _{V}\mathcal{A}%
)(X,Y),W)  \label{4F} \\
&&-g(\mathcal{T}_{V}X,\mathcal{T}_{W}Y)+g(\mathcal{A}_{X}V,\mathcal{A}_{V}Y),
\notag
\end{eqnarray}%
for any $X,Y,Z\in \Gamma ((\ker F_{\ast })^{\bot })$, $V,W\in \Gamma (\ker
F_{\ast })$.

We recall the notion of harmonic maps between Riemannian manifolds. Let $%
(M,g_{M})$ and $(N,g_{N})$ be Riemannian manifolds and suppose that $\varphi
:M\rightarrow N$ is a smooth map between them. Then the differential $%
\varphi _{\ast }$ of $\varphi $ can be viewed a section of the bundle $\
Hom(TM,\varphi ^{-1}TN)\rightarrow M,$ where $\varphi ^{-1}TN$ is the
pullback bundle which has fibres $(\varphi ^{-1}TN)_{p}=T_{\varphi (p)}N,$ $%
p\in M.\ Hom(TM,\varphi ^{-1}TN)$ has a connection $\nabla $ induced from
the Levi-Civita connection $\nabla ^{M}$ and the pullback connection. Then
the second fundamental form of $\varphi $ is given by 
\begin{equation}
(\nabla \varphi _{\ast })(X,Y)=\nabla _{X}^{\varphi }\varphi _{\ast
}(Y)-\varphi _{\ast }(\nabla _{X}^{M}Y)  \label{5}
\end{equation}%
for $X,Y\in \Gamma (TM),$ where $\nabla ^{\varphi }$ is the pullback
connection. It is known that the second fundamental form is symmetric. If $%
\varphi $ is a Riemannian submersion it can be easily prove that 
\begin{equation}
(\nabla \varphi _{\ast })(X,Y)=0  \label{5a}
\end{equation}%
for $X,Y\in \Gamma ((\ker F_{\ast })^{\bot })$.A smooth map $\varphi
:(M,g_{M})\rightarrow (N,g_{N})$ is said to be harmonic if $trace(\nabla
\varphi _{\ast })=0.$ On the other hand, the tension field of $\varphi $ is
the section $\tau (\varphi )$ of $\Gamma (\varphi ^{-1}TN)$ defined by%
\begin{equation}
\tau (\varphi )=div\varphi _{\ast }=\sum_{i=1}^{m}(\nabla \varphi _{\ast
})(e_{i},e_{i}),  \label{6}
\end{equation}%
where $\left\{ e_{1},...,e_{m}\right\} $ is the orthonormal frame on $M$.
Then it follows that $\varphi $ is harmonic if and only if $\tau (\varphi
)=0 $, for details, \cite{B}.

\section{\textbf{Cosymplectic Manifolds}}

A $(2m+1)$-dimensional $C^{\infty }$-manifold $M$ is said to have an almost
contact structure if there exist on $M$ a tensor field $\phi $ of type $%
(1,1) $, a vector field $\xi $ and 1-form $\eta $ satisfying:%
\begin{equation}
\phi ^{2}=-I+\eta \otimes \xi ,\text{ \ }\phi \xi =0,\text{ }\eta \circ \phi
=0,\text{ \ \ }\eta (\xi )=1.  \label{fi}
\end{equation}%
There always exists a Riemannian metric $g$ on an almost contact manifold $M$
satisfying the following conditions%
\begin{equation}
g(\phi X,\phi Y)=g(X,Y)-\eta (X)\eta (Y),\text{\ }\eta (X)=g(X,\xi )
\label{metric}
\end{equation}%
where $X,Y$ are vector fields on $M.$

An almost contact structure $(\phi ,\xi ,\eta )$ is said to be normal if the
almost complex structure $J$ on the product manifold $M\times R$ is given by%
\begin{equation*}
J(X,f\frac{d}{dt})=(\phi X-f\xi ,\eta (X)\frac{d}{dt})
\end{equation*}%
where $f$ is the $C^{\infty }$-function on $M\times R$ has no torsion i.e., $%
J$ is integrable. The condition for normality in terms of $\phi ,\xi $ and $%
\eta $ is $\left[ \phi ,\phi \right] +2d\eta \otimes \xi =0$ on $M$, where $%
\left[ \phi ,\phi \right] $ is the Nijenhuis tensor of $\phi .$~Finally, the
fundamental two-form $\Phi $ is defined by $\Phi (X,Y)=g(X,\phi Y).$

An almost contact metric structure $(\phi ,\xi ,\eta ,g)$ is said to be
cosymplectic, if it is normal and both $\Phi $ and $\eta $ are closed (\cite%
{BL1}, \cite{LUDDEN}), and the structure equation of a cosymplectic manifold
is given by%
\begin{equation}
(\nabla _{E}\phi )G=0  \label{nambla}
\end{equation}%
for any $E,G$ tangent to $M,$ where $\nabla $ denotes the Riemannian
connection of the metric $g$ on $M.$ Moreover, for cosymplectic manifold%
\begin{equation}
\nabla _{E}\xi =0.  \label{xzeta}
\end{equation}%
The $\phi $-\textit{sectional curvature} of a cosymplectic manifold $M$
defined for any unit vector $E$ on $M$ orthogonal to $\xi $ by the formula 
\begin{equation}
H(E)=g(R(E,\phi E)\phi E,E).  \label{HOLOMORPHIC}
\end{equation}%
If cosymplectic manifold $M$ \ has $\phi $-\textit{sectional curvature} $c$,
then $M$ is called a \textit{cosymplectic space} \textit{form} and denoted
by $M(c)$ \cite{LUDDEN}. So the curvarure tensor $R$ of a cosymplectic space
form $M(c)$ is given by \cite{LUDDEN}%
\begin{eqnarray}
R(X,Y)Z &=&\frac{c}{4}(g(Y,Z)X-g(X,Z)Y+\eta (X)\eta (Z)Y-\eta (Y)\eta
(Z)X+g(X,Z)\eta (Y)\xi )  \notag \\
&&-g(Y,Z)\eta (X)\xi +g(\phi Y,Z)\phi X-g(\phi X,Z)\phi Y-2g(\phi X,Y)\phi
Z),  \label{CURVATURE}
\end{eqnarray}%
for any tangent vector fields $X,Y,Z$ to $M(c).$

The canonical example of cosymplectic manifold is given by the product $%
B^{2n}\times 
\mathbb{R}
$ Kahler manifold $B^{2n}(J,g)$ with $%
\mathbb{R}
$ real line. Now we will introduce a well known cosymplectic manifold
example on $%
\mathbb{R}
^{2n+1}.$

\begin{example}[\protect\cite{O}]
\label{O}We consider $%
\mathbb{R}
^{2n+1}$ with Cartesian coordinates $(x_{i},y_{i},z)$ $(i=1,...,n)$ and its
usual contact form 
\begin{equation*}
\eta =dz,
\end{equation*}%
The characteristic vector field $\xi $ is given by $\frac{\partial }{%
\partial z}$ and its Riemannian metric $g$ and tensor field $\phi $ are
given by%
\begin{equation*}
g=\sum\limits_{i=1}^{n}((dx_{i})^{2}+(dy_{i})^{2})+(dz)^{2},\text{ \ }\phi
=\left( 
\begin{array}{ccc}
0 & \delta _{ij} & 0 \\ 
-\delta _{ij} & 0 & 0 \\ 
0 & 0 & 0%
\end{array}%
\right) \text{, \ }i=1,...,n
\end{equation*}%
This gives a cosymplectic structure on $%
\mathbb{R}
^{2n+1}$. The vector fields $E_{i}=\frac{\partial }{\partial y_{i}},$ $%
E_{n+i}=\frac{\partial }{\partial x_{i}}$, $\xi $ form a $\phi $-basis for
the cosymplectic structure. On the other hand, it can be shown that $%
\mathbb{R}
^{2n+1}(\phi ,\xi ,\eta ,g)$ is a cosymplectic manifold.
\end{example}

\begin{example}[\protect\cite{KIM}]
\label{KIM}We denote Cartesian coordinates in $%
\mathbb{R}
^{5}$ by $(x_{1},x_{2},x_{3},x_{4},x_{5})$ and its Riemannian metric $g$ 
\begin{equation*}
g=\left( 
\begin{array}{ccccc}
1+\tau ^{2} & 0 & \tau ^{2} & 0 & -\tau \\ 
0 & 1 & 0 & 0 & 0 \\ 
\tau ^{2} & 0 & 1+\tau ^{2} & 0 & -\tau \\ 
0 & 0 & 0 & 1 & 0 \\ 
-\tau & 0 & -\tau & 0 & 1%
\end{array}%
\right) ,\text{ \ }
\end{equation*}%
where $\tau =\sin (x_{1}+x_{3}).$ We define \ an \ almost contact structure $%
(\phi ,\xi ,\eta )$ on $%
\mathbb{R}
^{5}$ by 
\begin{equation*}
\phi =\left( 
\begin{array}{ccccc}
0 & -1 & 0 & 0 & 0 \\ 
1 & 0 & 0 & 0 & 0 \\ 
0 & 0 & 0 & -1 & 0 \\ 
0 & 0 & 1 & 0 & 0 \\ 
0 & -\tau & 0 & -\tau & 0%
\end{array}%
\right) ,\eta =-\tau dx_{1}-\tau dx_{3}+dx_{5},\xi =\frac{\partial }{%
\partial x_{5}}.
\end{equation*}%
The fundamental 2-form $\Phi $ have the form 
\begin{equation*}
\Phi =dx_{1}\wedge dx_{2}+dx_{3}\wedge dx_{4}.
\end{equation*}%
This gives a cosymplectic structure on $%
\mathbb{R}
^{5}$. If we take vector fields $E_{1}=\frac{\partial }{\partial x_{1}}+\tau 
\frac{\partial }{\partial x_{5}},E_{2}=\frac{\partial }{\partial x_{3}},\phi
E_{1}=E_{3}=\frac{\partial }{\partial x_{2}},\phi E_{2}=E_{4}=\frac{\partial 
}{\partial x4}$ and $E_{5}=\frac{\partial }{\partial x_{5}}$ then these
vector fields form a frame field in $%
\mathbb{R}
^{5}$.
\end{example}

\section{\textbf{Slant Riemannian submersions}}

\begin{definition}
\label{DEF}Let $M(\phi ,\xi ,\eta ,g_{M})$ be a cosymplectic manifold and $%
(N,g_{N})$ be a Riemannian manifold. A Riemannian submersion $F:M(\phi ,\xi
,\eta ,g_{M})\rightarrow $ $(N,g_{N})$ is said to be slant if for any non
zero vector $X\in \Gamma (\ker F_{\ast })-\{\xi \}$, the angle $\theta (X)$
between $\phi X$ and the space $\ker F_{\ast }$ is a constant (which is
independent of the choice of $p\in M$ and of $X\in \Gamma (\ker F_{\ast
})-\{\xi \}$). The angle $\theta $ is called the slant angle of the slant
submersion. Invariant and anti-invariant submersions are slant submersions
with $\theta =0$ and $\theta =\pi /2$, respectively. A slant submersion
which is not invariant nor anti-invariant is called proper submersion.
\end{definition}

Now we will introduce examples.

\begin{example}
\label{E3}$%
\mathbb{R}
^{5}$ has got a cosymplectic structure as in Example \ref{O} . Let $F:%
\mathbb{R}
^{5}\rightarrow 
\mathbb{R}
^{2}$ be a map defined by $F(x_{1},x_{2},y_{1},y_{2},z)=(\frac{1}{\sqrt{2}}%
(x_{1}-x_{2}),y_{2})$. Then, by direct calculations 
\begin{equation*}
\ker F_{\ast }=span\{V_{1}=E_{1},V_{2}=\frac{1}{\sqrt{2}}%
(E_{3}+E_{4}),V_{3}=\xi =E_{5}\}
\end{equation*}%
and 
\begin{equation*}
(\ker F_{\ast })^{\bot }=span\{H_{1}=\frac{1}{\sqrt{2}}(E_{3}-E_{4}),\text{ }%
H_{2}=E_{2}\}.
\end{equation*}%
Then it is easy to see that $F$ is a Riemannian submersion. Moreover, $\phi
V_{1}=E_{3}$ and $\phi V_{2}=-\frac{1}{\sqrt{2}}(E_{1}+E_{2})$ imply that $%
\left\vert g(\phi V_{1},V_{2})\right\vert =\frac{1}{\sqrt{2}}$. So $F$ is a
slant submersion with slant angle $\theta =\frac{\pi }{4}$ and $\xi $ is
vertical.
\end{example}

\begin{example}
\label{E4}$%
\mathbb{R}
^{5}$ has got a cosymplectic structure as in Example \ref{KIM}. Let $F:%
\mathbb{R}
^{5}\rightarrow 
\mathbb{R}
^{2}$ be a map defined by $F(x_{1},x_{2},y_{1},y_{2},z)=(\frac{1}{\sqrt{2}}%
(x_{1}-y_{1}),\frac{1}{\sqrt{2}}(x_{2}-y_{2}))$. Then, by direct
calculations 
\begin{equation*}
\ker F_{\ast }=span\{V_{1}=\frac{1}{\sqrt{2}}(E_{3}+E_{4}),V_{2}=\frac{1}{%
\sqrt{2}}(E_{1}+E_{2}),V_{3}=\xi =E_{5}\}
\end{equation*}%
and 
\begin{equation*}
(\ker F_{\ast })^{\bot }=span\{H_{1}=\frac{1}{\sqrt{2}}(E_{3}-E_{4}),\text{ }%
H_{2}=\frac{1}{\sqrt{2}}(E_{1}-E_{2})\}.
\end{equation*}%
Then it is easy to see that $F$ is a Riemannian submersion. Moreover, $\phi
V_{1}=-\frac{1}{\sqrt{2}}(E_{1}+E_{2})$ and $\phi V_{2}=\frac{1}{\sqrt{2}}%
(E_{3}+E_{4})$imply that $\left\vert g(\phi V_{1},V_{2})\right\vert =1$. So $%
F$ is a slant submersion with slant angle $\theta =0.$
\end{example}

\begin{example}
\label{HOR}$%
\mathbb{R}
^{5}$ has got a cosymplectic structure as in Example \ref{O} . Let $F:%
\mathbb{R}
^{5}\rightarrow 
\mathbb{R}
^{3}$ be a map defined by $F(x_{1},x_{2},y_{1},y_{2},z)=(\frac{1}{\sqrt{2}}%
(x_{1}-x_{2}),y_{2},z)$. Then, by direct calculations 
\begin{equation*}
\ker F_{\ast }=span\{V_{1}=E_{1},V_{2}=\frac{1}{\sqrt{2}}(E_{3}+E_{4})\}
\end{equation*}%
and 
\begin{equation*}
(\ker F_{\ast })^{\bot }=span\{H_{1}=\frac{1}{\sqrt{2}}(E_{3}-E_{4}),\text{ }%
H_{2}=E_{2},H_{3}=\xi \}.
\end{equation*}%
Then it is easy to see that $F$ is a Riemannian submersion. Moreover, $\phi
V_{1}=E_{3}$ and $\phi V_{2}=-\frac{1}{\sqrt{2}}(E_{1}+E_{2})$ imply that $%
\left\vert g(\phi V_{1},V_{2})\right\vert =\frac{1}{\sqrt{2}}$. So $F$ is a
slant submersion with slant angle $\theta =\frac{\pi }{4}$and $\xi $ is
horizontal.
\end{example}

\subsection{Slant Riemannian submersions admitting vertical structure vector
field}

In this subsection, we will investigate the properties of slant Riemannian
submersions from cosymplectic manifolds onto a Riemannian manifold such that
characteristic vector field $\xi $ is vertical.

Now, let $F$ be a slant Riemannian submersion from a cosymplectic manifold $%
M(\phi ,\xi ,\eta ,g_{M})$ onto a Riemannian manifold $(N,g_{N}).$ Then for
any $U,V\in \Gamma (\ker F_{\ast }),$ we put 
\begin{equation}
\phi U=\psi U+\omega U,  \label{TAN}
\end{equation}%
where $\psi U$ and $\omega U$ are vertical and horizontal components of $%
\phi U$, respectively. Similarly, for any $X\in \Gamma (\ker F_{\ast
})^{\perp }$, we have 
\begin{equation}
\phi X=\mathcal{B}X+\mathcal{C}X,  \label{NOR}
\end{equation}%
where $\mathcal{B}X$ (resp.$\mathcal{C}X),$ is vertical part (resp.
horizontal part) of $\phi X$.

From (\ref{metric}), (\ref{TAN}) and (\ref{NOR}) we obtain%
\begin{equation}
g_{M}(\psi U,V)=-g_{M}(U,\psi V)  \label{ANTT}
\end{equation}

and%
\begin{equation}
g_{M}(\omega U,Y)=-g_{M}(U,\mathcal{B}Y).  \label{ANTN}
\end{equation}%
for any $U,V$ $\in \Gamma (\ker F_{\ast })$ and $Y\in \Gamma ((\ker F_{\ast
})^{\bot })$.

Using (\ref{1}), (\ref{3}) and (\ref{xzeta}) we obtain%
\begin{equation}
\mathcal{T}_{U}\xi =0,  \label{CONNECTION}
\end{equation}%
for any $U\in \Gamma (\ker F_{\ast }).$

From (\ref{fi}), (\ref{TAN}) and (\ref{NOR}) we can easily obtain following
lemma.

\begin{lemma}
\label{LEM}Let $F$ be an slant Riemannian submersion from a cosymplectic
manifold $M(\phi ,\xi ,\eta ,g_{M})$ to a Riemannian manifold $(N,g_{N})$.
Then we have%
\begin{eqnarray*}
-X &=&\omega \mathcal{B}X+\mathcal{C}^{2}X,\text{ \ } \\
0 &=&\psi \mathcal{B}X+\mathcal{BC}X, \\
\phi ^{2}U &=&\psi ^{2}U+\mathcal{B}\omega U,\text{ } \\
0 &=&\omega \psi U+\mathcal{C}\omega U, \\
g_{M}(\mathcal{C}X,\phi U) &=&-g_{M}(\mathcal{B}X,\psi U)
\end{eqnarray*}%
for any $X\in \Gamma ((\ker F_{\ast })^{\bot })$ and $V\in \Gamma ((\ker
F_{\ast }))$.
\end{lemma}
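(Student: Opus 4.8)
The plan is to derive each of the five identities by applying the defining relations $\phi U=\psi U+\omega U$ (\ref{TAN}) and $\phi X=\mathcal{B}X+\mathcal{C}X$ (\ref{NOR}) together with the almost-contact identity $\phi^{2}=-I+\eta\otimes\xi$ (\ref{fi}), and then separating the resulting expressions into their vertical and horizontal components. The key observation is that $\psi$ and $\mathcal{B}$ have vertical image while $\omega$ and $\mathcal{C}$ have horizontal image, so once I expand $\phi^{2}$ applied to a vertical or horizontal vector, matching the vertical parts against each other and the horizontal parts against each other will produce the paired identities.

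First I would take $X\in\Gamma((\ker F_{\ast})^{\bot})$ and compute $\phi(\phi X)=\phi(\mathcal{B}X+\mathcal{C}X)$. Since $\mathcal{B}X$ is vertical, $\phi(\mathcal{B}X)=\psi\mathcal{B}X+\omega\mathcal{B}X$; since $\mathcal{C}X$ is horizontal, $\phi(\mathcal{C}X)=\mathcal{B}\mathcal{C}X+\mathcal{C}^{2}X$. On the other hand $\phi^{2}X=-X+\eta(X)\xi$, and because $X$ is horizontal while (in this subsection) $\xi$ is vertical, we have $\eta(X)=g_{M}(X,\xi)=0$, so $\phi^{2}X=-X$. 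Collecting horizontal parts gives $-X=\omega\mathcal{B}X+\mathcal{C}^{2}X$ and collecting vertical parts gives $0=\psi\mathcal{B}X+\mathcal{B}\mathcal{C}X$, which are the first two identities. Repeating the same expansion for $U\in\Gamma(\ker F_{\ast})$, namely $\phi(\phi U)=\phi(\psi U+\omega U)$ with $\psi U$ vertical and $\omega U$ horizontal, and comparing with $\phi^{2}U$, yields the vertical identity $\phi^{2}U=\psi^{2}U+\mathcal{B}\omega U$ and the horizontal identity $0=\omega\psi U+\mathcal{C}\omega U$ (here I keep $\phi^{2}U$ unexpanded, matching the stated form). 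The fifth identity, $g_{M}(\mathcal{C}X,\phi U)=-g_{M}(\mathcal{B}X,\psi U)$, I would obtain from the compatibility of $\phi$ with the metric: writing $g_{M}(\phi X,\phi U)=g_{M}(X,U)-\eta(X)\eta(U)$ via (\ref{metric}), expanding both $\phi X$ and $\phi U$ into their components, and using the orthogonality of the vertical and horizontal distributions to discard the cross terms and isolate the desired relation.

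I do not expect a genuine obstacle here, since the lemma is a direct algebraic consequence of the component decompositions; the only point requiring care is the \emph{bookkeeping} of which summand lands in which distribution, and the repeated use of $\eta(X)=0$ for horizontal $X$ in the vertical-$\xi$ setting, which is exactly why $\phi^{2}X$ collapses to $-X$ in the first identity while $\phi^{2}U$ is left in its general form in the third. The cleanest exposition is simply to present the two expansions $\phi^{2}X$ and $\phi^{2}U$, read off the four tensorial identities by projecting, and then handle the scalar identity separately through the metric compatibility relation (\ref{metric}).
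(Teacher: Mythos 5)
Your proposal is correct and is exactly the argument the paper intends: the lemma is stated there without proof, citing precisely (\ref{fi}), (\ref{TAN}) and (\ref{NOR}), and your expansion of $\phi^{2}X$ and $\phi^{2}U$ followed by projection onto the vertical and horizontal distributions is the standard way to read off the four tensorial identities, with (\ref{metric}) handling the scalar one. No gaps.
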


\begin{proposition}
(\cite{KUPMUR}) \label{PROP.}Let $F$ be a Riemannian submersion from almost
contact manifold onto a Riemannian manifold. If dim($\ker F_{\ast })=2$ and $%
\xi $\ is vertical then fibers are anti-invariant.
\end{proposition}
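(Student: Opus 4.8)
The plan is to exploit the two-dimensionality of $\ker F_\ast$ together with the fact that $\xi$ is vertical and is annihilated by $\phi$. Since $\dim(\ker F_\ast)=2$ and $\xi\in\Gamma(\ker F_\ast)$ is a unit vector (because $\eta(\xi)=g_M(\xi,\xi)=1$ by (\ref{fi}) and (\ref{metric})), at each point I can choose a unit vertical vector $U$ orthogonal to $\xi$, so that $\{U,\xi\}$ is an orthonormal basis of $\ker F_\ast$. By the vertical/horizontal splitting $\phi U=\psi U+\omega U$ of (\ref{TAN}), the fibers are anti-invariant precisely when the vertical component $\psi U$ vanishes; thus the whole proof reduces to showing $\psi U=0$.

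First I would record the skew-symmetry of $\phi$, namely $g_M(\phi X,Y)=-g_M(X,\phi Y)$, which follows by replacing $Y$ with $\phi Y$ in (\ref{metric}) and using $\phi^2=-I+\eta\otimes\xi$ together with $\eta\circ\phi=0$ from (\ref{fi}). Since $\psi U$ lies in $\ker F_\ast=\mathrm{span}\{U,\xi\}$ and the horizontal part $\omega U$ is orthogonal to both $U$ and $\xi$, I can expand
\begin{equation*}
\psi U=g_M(\phi U,U)\,U+g_M(\phi U,\xi)\,\xi .
\end{equation*}
I then kill both coefficients: skew-symmetry gives $g_M(\phi U,U)=-g_M(\phi U,U)$, hence $g_M(\phi U,U)=0$; and $g_M(\phi U,\xi)=-g_M(U,\phi\xi)=0$ because $\phi\xi=0$ by (\ref{fi}). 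Therefore $\psi U=0$ and $\phi U=\omega U\in\Gamma((\ker F_\ast)^{\bot})$, so every vertical vector orthogonal to $\xi$ is carried by $\phi$ into the horizontal distribution, which is exactly the anti-invariance of the fibers.

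I do not expect a genuine obstacle in this argument; the content is entirely linear-algebraic. The one point requiring care is that the dimension hypothesis is essential: it is precisely what forces the orthogonal complement of $\xi$ inside $\ker F_\ast$ to be the single line spanned by $U$, so that annihilating $\phi U$ against $U$ and against $\xi$ annihilates \emph{all} of $\psi U$. It is also worth noting that the cosymplectic conditions (normality and closedness of $\Phi$ and $\eta$) are never invoked; only the pointwise almost contact metric identities (\ref{fi}) and (\ref{metric}) enter, which is consistent with the statement being formulated for an arbitrary almost contact metric submersion.
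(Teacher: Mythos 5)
Your proof is correct; the paper does not reprove this proposition (it is imported from \cite{KUPMUR}), and your argument is the standard pointwise one: since $\dim(\ker F_{\ast})=2$ and $\xi$ is vertical, the orthogonal complement of $\xi$ in the vertical space is the single line spanned by $U$, and $g_{M}(\phi U,U)=0$ (skew-symmetry of $\phi$) together with $g_{M}(\phi U,\xi)=-g_{M}(U,\phi\xi)=0$ annihilates both components of $\psi U$. Your closing remark that only the almost contact metric identities (\ref{fi}) and (\ref{metric}) are used, and none of the cosymplectic conditions, is also accurate and consistent with the statement being formulated for a general almost contact (metric) manifold.
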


As the proof of the following proposition is similar to slant submanifolds
(see \cite{alfonso}) we don't give its proof.

\begin{proposition}
\label{d}Let $F$ be a Riemannian submersion from a cosymplectic manifold $%
M(\phi ,\xi ,\eta ,g_{M})$ onto a Riemannian manifold $(N,g_{N})$ such that $%
\xi \in \Gamma (\ker F_{\ast })$. Then $F$ is anti-invariant submersion if
and only if $D$ is integrable, where $D=\ker F_{\ast }-\{\xi \}$.
\end{proposition}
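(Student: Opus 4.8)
The plan is to reduce the integrability of $D$ to a single scalar condition and then to match that condition with the vanishing of $\psi$ on $D$. First I would observe that $\ker F_{\ast }$ is always integrable, its leaves being the fibres, so for $U,V\in \Gamma (D)$ the bracket $[U,V]$ is automatically vertical. Since $D$ is by definition the orthogonal complement of $\xi $ inside $\ker F_{\ast }$, the only possible obstruction to $[U,V]$ lying in $\Gamma (D)$ is its component along $\xi $. Hence $D$ is integrable if and only if $g_{M}([U,V],\xi )=0$ for all $U,V\in \Gamma (D)$, and the entire problem reduces to evaluating this inner product.

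Next I would compute $g_{M}([U,V],\xi )=g_{M}(\nabla _{U}V,\xi )-g_{M}(\nabla _{V}U,\xi )$ using the cosymplectic relations (\ref{nambla}) and (\ref{xzeta}) together with the tangential/normal splitting $\phi U=\psi U+\omega U$ from (\ref{TAN}). The idea is to write $V=-\phi ^{2}V$, which is legitimate on $D$ since $\eta (V)=0$, to push $\phi $ through the Levi--Civita connection by means of $(\nabla _{U}\phi )=0$, and then to use the skew-symmetry of $\phi $ together with $\phi \xi =0$ and the antisymmetry relation (\ref{ANTT}), namely $g_{M}(\psi U,V)=-g_{M}(U,\psi V)$. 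The target is to express $g_{M}([U,V],\xi )$ purely through the tangential operator $\psi $ evaluated on $U$ and $V$; I expect it to come out proportional to $g_{M}(\psi U,V)$, or equivalently to $g_{M}(\psi U,V)-g_{M}(\psi V,U)$ after antisymmetrizing.

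Finally I would read off the equivalence. Anti-invariance of $F$ means, by Definition \ref{DEF} with $\theta =\pi /2$, that $\phi X$ is horizontal for every $X\in \Gamma (\ker F_{\ast })-\{\xi \}$, that is, $\psi |_{D}\equiv 0$. If $\psi |_{D}=0$, the expression obtained in the previous step vanishes and $D$ is integrable. Conversely, if $D$ is integrable then $g_{M}(\psi U,V)=0$ for all $U,V\in \Gamma (D)$; since $\psi $ maps $D$ into $D$ and is skew-symmetric there, non-degeneracy of $g_{M}$ on $D$ forces $\psi U=0$ for every $U$, so $F$ is anti-invariant.

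The step I expect to be the main obstacle is the middle computation: organizing the covariant derivatives so that the horizontal pieces, which are governed by the O'Neill tensors $\mathcal{T}$ and $\mathcal{A}$ through (\ref{1})--(\ref{4}), pair off against the vertical field $\xi $ and drop out, leaving a clean multiple of $g_{M}(\psi U,V)$. Care is required because $\psi V$ and $\omega V$ mix the vertical and horizontal directions, so one must repeatedly invoke $\mathcal{T}_{U}\xi =0$ from (\ref{CONNECTION}) and the covariant constancy of $\xi $ to discard the unwanted terms; once the identity relating $g_{M}([U,V],\xi )$ to $g_{M}(\psi U,V)$ is secured, the stated equivalence follows immediately.
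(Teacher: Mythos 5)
Your reduction of the integrability of $D$ to the single condition $g_{M}([U,V],\xi )=0$ for $U,V\in \Gamma (D)$ is correct: the fibres are integral manifolds of $\ker F_{\ast }$, so $[U,V]$ is automatically vertical and only its $\xi $-component can obstruct integrability. The genuine gap is in the middle step, where you expect this component to come out proportional to $g_{M}(\psi U,V)$. It does not. Writing
\begin{equation*}
g_{M}([U,V],\xi )=U(g_{M}(V,\xi ))-g_{M}(V,\nabla _{U}\xi )-V(g_{M}(U,\xi
))+g_{M}(U,\nabla _{V}\xi )
\end{equation*}
and using $\eta (U)=\eta (V)=0$ together with (\ref{xzeta}), i.e. $\nabla \xi =0$, every term vanishes, so $g_{M}([U,V],\xi )=0$ identically and $D$ is \emph{always} integrable on a cosymplectic manifold. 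The term $g_{M}(\phi U,V)$ you are hoping to extract is produced, in the Sasakian setting of \cite{alfonso}, by the identity $\nabla _{U}\xi =-\phi U$ (equivalently $2d\eta (U,V)=2g(U,\phi V)$); in the cosymplectic setting that identity is replaced by $\nabla \xi =0$ and $d\eta =0$, and the mechanism disappears. Your argument therefore establishes only the vacuous direction (anti-invariant $\Rightarrow $ $D$ integrable) and cannot deliver the converse.

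Worse, the computation above shows the converse is false as stated: Example \ref{E3} is a proper slant submersion with $\theta =\pi /4$ and $\xi $ vertical, whose distribution $D$ is spanned by constant-coefficient combinations of coordinate fields and hence integrable, yet $F$ is not anti-invariant. The paper gives no proof of Proposition \ref{d}, only a pointer to the Sasakian slant-submanifold argument of \cite{alfonso}, and that argument does not transfer to cosymplectic manifolds for exactly the reason your middle step collapses. So the failure is not a defect of your write-up alone: no proof along these lines (or, apparently, any lines) can succeed without changing the hypothesis, and you should flag the discrepancy rather than try to force the computation to yield $g_{M}(\psi U,V)$.
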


\begin{theorem}
\label{tu}Let $M(\phi ,\xi ,\eta ,g_{M})$ be a cosymplectic manifold \ of
dimension $2m+1$ and $(N,g_{N})$ is a Riemannian manifold of dimension $n$.
Let $F:M(\phi ,\xi ,\eta ,g_{M})\rightarrow $ $(N,g_{N})$ be a slant
Riemannian submersion. Then the fibers are not proper totally umbilical.
\end{theorem}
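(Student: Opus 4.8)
The plan is to exploit the fact that in this subsection the characteristic vector field $\xi$ is vertical, together with the cosymplectic structure equation which forces $\xi$ to be parallel. Suppose, toward a contradiction, that the fibers are proper totally umbilical. Then the defining relation (\ref{4a}), namely $\mathcal{T}_{U}W=g_{M}(U,W)H$, holds for all $U,W\in \Gamma (\ker F_{\ast })$ with mean curvature vector $H\neq 0$. Since $\xi \in \Gamma (\ker F_{\ast })$ by the standing hypothesis, I would first specialize this identity by setting $W=\xi$.

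The key step is then to invoke (\ref{CONNECTION}), which records $\mathcal{T}_{U}\xi =0$ and is itself a direct consequence of $\nabla _{E}\xi =0$ (equation (\ref{xzeta})) valid on any cosymplectic manifold. Substituting $W=\xi$ into the umbilical relation gives $\mathcal{T}_{U}\xi =g_{M}(U,\xi )H=\eta (U)H$, and comparing this with $\mathcal{T}_{U}\xi =0$ yields $\eta (U)H=0$ for every vertical $U$. Choosing $U=\xi$ and using $\eta (\xi )=g_{M}(\xi ,\xi )=1$ from (\ref{fi}) immediately forces $H=0$, contradicting the assumed properness.

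Thus the conclusion $H=0$ shows the fibers are minimal; since a fiber that is simultaneously totally umbilical and minimal has $\mathcal{T}\equiv 0$ by (\ref{4a}), such a fiber is in fact totally geodesic. Hence no totally umbilical fiber can carry a non-vanishing mean curvature, which is exactly the assertion that the fibers are not proper totally umbilical.

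I expect no serious obstacle here: the whole argument hinges on the single observation that the parallelism of $\xi$ in the cosymplectic setting makes $\mathcal{T}_{U}\xi$ vanish, which is incompatible with the umbilical condition unless the mean curvature is zero. The only point requiring care is that $\xi$ must genuinely lie in $\ker F_{\ast }$, so that it is an admissible choice for the vertical arguments in (\ref{4a}) — and this is precisely the defining hypothesis of the subsection on submersions admitting a vertical structure vector field.
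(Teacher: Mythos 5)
Your proposal is correct and follows essentially the same route as the paper: the paper's proof likewise plugs $\xi$ into the umbilical relation and uses $\mathcal{T}_{\xi }\xi =0$ (a consequence of $\nabla \xi =0$ on a cosymplectic manifold with $\xi$ vertical) to force $H=0$, hence total geodesy of the fibers. Your only variation is specializing first to $W=\xi$ and then to $U=\xi$ rather than taking $U=V=\xi$ at once, which changes nothing of substance.
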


\begin{proof}
If the fibers are proper totally umbilical, then we have $\mathcal{T}%
_{U}V=g_{M}(U,V)H$ for any vertical vector fields $U,V$ where $H$ is the
mean curvature vector field of any fibre. Since $\mathcal{T}_{\xi }\xi $ $=0$%
, we have $H=0$, which shows that fibres are minimal. Hence the fibers are
totally geodesic. This completes proof of the Theorem.
\end{proof}

By (\ref{1}), (\ref{2}), (\ref{TAN}) and (\ref{NOR}) we have%
\begin{equation}
(\nabla _{U}\omega )V=\mathcal{C}T_{U}V-\mathcal{T}_{U}\psi V,  \label{W}
\end{equation}%
\begin{equation}
(\nabla _{U}\psi )V=\mathcal{BT}_{U}V-\mathcal{T}_{U}\omega V,  \label{F}
\end{equation}%
where%
\begin{eqnarray*}
(\nabla _{U}\omega )V &=&\mathcal{H}\nabla _{U}\omega V-\omega \hat{\nabla}%
_{U}V, \\
(\nabla _{U}\psi )V &=&\hat{\nabla}_{U}\psi V-\psi \hat{\nabla}_{U}V
\end{eqnarray*}%
for $U,V\in \Gamma (\ker F_{\ast }).$

We will give a characterization theorem for slant submersions of a
cosymplectic manifold. Since the proof of the following theorem is quite
similar to (Theorem 3 of \cite{KUPMUR}) , we don't give its proof.

\begin{theorem}
\label{pisi}Let $F$ be a Riemannian submersion from a cosymplectic manifold $%
M(\phi ,\xi ,\eta ,g_{M})$ onto a Riemannian manifold $(N,g_{N})$ such that $%
\xi \in \Gamma (\ker F_{\ast })$. Then, $F$ is a slant Riemannian submersion
if and only if there exist a constant $\lambda \in \lbrack 0,1]$ such that 
\begin{equation}
\psi ^{2}=-\lambda (I-\eta \otimes \xi ).  \label{SLANT}
\end{equation}%
Furthermore, in such case, if $\theta $ is the slant angle of $F$, it
satisfies that $\lambda =\cos ^{2}\theta .$
\end{theorem}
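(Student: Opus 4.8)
The plan is to translate the constancy of the slant angle into an algebraic identity for the tangential operator $\psi$ on the distribution $D=\ker F_{\ast}-\{\xi\}$. First I would pin down the geometric content of $\theta$. Since $\phi\xi=0$ by (\ref{fi}), for any vertical $X$ we have $\phi X=\phi(X-\eta(X)\xi)$, so it suffices to treat vertical $X$ with $\eta(X)=0$. For such $X$ the decomposition (\ref{TAN}), $\phi X=\psi X+\omega X$, is precisely the orthogonal splitting of $\phi X$ into its part $\psi X\in\ker F_{\ast}$ and its part $\omega X\in(\ker F_{\ast})^{\bot}$; hence $\psi X$ is the orthogonal projection of $\phi X$ onto the fibre and the angle between $\phi X$ and $\ker F_{\ast}$ obeys $\cos\theta(X)=\left\Vert\psi X\right\Vert/\left\Vert\phi X\right\Vert$. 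As $\eta(X)=0$, (\ref{metric}) gives $\left\Vert\phi X\right\Vert^{2}=g_{M}(X,X)$, and by the skew-symmetry (\ref{ANTT}) of $\psi$ we have $\left\Vert\psi X\right\Vert^{2}=-g_{M}(\psi^{2}X,X)$. Combining these yields the pivotal identity
\[
\cos ^{2}\theta (X)=\frac{-g_{M}(\psi ^{2}X,X)}{g_{M}(X,X)},
\]
valid for every vertical $X$ orthogonal to $\xi$.

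Before exploiting this, I would record two structural facts. From $\phi\xi=0$ one gets $\psi\xi=0$, and then (\ref{ANTT}) shows $g_{M}(\psi X,\xi)=-g_{M}(X,\psi\xi)=0$, so $\psi$ maps $D$ into $D$. Moreover $\psi^{2}$ is self-adjoint on $D$, since (\ref{ANTT}) gives $g_{M}(\psi^{2}X,Y)=-g_{M}(\psi X,\psi Y)=g_{M}(X,\psi^{2}Y)$.

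For the forward implication, assume $F$ is slant, so $\theta(X)\equiv\theta$ is constant and, with $\lambda=\cos^{2}\theta$, the pivotal identity reads $g_{M}\big((\psi^{2}+\lambda I)X,X\big)=0$ for all $X\in D$. The step I expect to be the main obstacle is upgrading this vanishing of a quadratic form to the operator identity $\psi^{2}=-\lambda I$ on $D$: since $\psi^{2}+\lambda I$ is self-adjoint, polarization gives $g_{M}\big((\psi^{2}+\lambda I)X,Y\big)+g_{M}\big((\psi^{2}+\lambda I)Y,X\big)=0$, and self-adjointness collapses this to $g_{M}\big((\psi^{2}+\lambda I)X,Y\big)=0$ for all $X,Y\in D$, whence $\psi^{2}+\lambda I=0$ on $D$. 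Finally $\psi^{2}\xi=0=-\lambda(I-\eta\otimes\xi)\xi$, so splitting an arbitrary vertical $U=U'+\eta(U)\xi$ with $U'\in D$ extends the identity to $\psi^{2}U=-\lambda(U-\eta(U)\xi)$, which is exactly (\ref{SLANT}) with $\lambda=\cos^{2}\theta$.

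For the converse, suppose $\psi^{2}=-\lambda(I-\eta\otimes\xi)$ with $\lambda\in[0,1]$. Then $-g_{M}(\psi^{2}X,X)=\lambda\,g_{M}(X,X)$ for every vertical $X$ with $\eta(X)=0$, so the pivotal identity gives $\cos^{2}\theta(X)=\lambda$, a value independent of the choice of $X$ and of the point; hence $F$ is slant with slant angle $\theta=\arccos\sqrt{\lambda}$. The constraint $\lambda\in[0,1]$ is in fact automatic rather than an extra hypothesis: $\lambda=\left\Vert\psi X\right\Vert^{2}/g_{M}(X,X)\ge0$, while $\left\Vert\psi X\right\Vert\le\left\Vert\phi X\right\Vert=\left\Vert X\right\Vert$ forces $\lambda\le1$.
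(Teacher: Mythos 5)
Your proof is correct, and it follows essentially the same route the paper relies on: the paper omits the proof and defers to Theorem 3 of \cite{KUPMUR} (modeled on the slant-submanifold argument of \cite{alfonso}), which is exactly your computation $\cos^{2}\theta(X)=-g_{M}(\psi^{2}X,X)/g_{M}(X,X)$ for vertical $X$ orthogonal to $\xi$, followed by the extension along $\xi$ via $\psi\xi=0$. The one place you add genuine value is in making explicit the polarization step that upgrades the vanishing of the quadratic form $g_{M}((\psi^{2}+\lambda I)X,X)$ to the operator identity $\psi^{2}=-\lambda I$ on $D$ using the self-adjointness of $\psi^{2}$ — a point the cited sources pass over silently.
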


By using (\ref{metric}), (\ref{TAN}), (\ref{ANTT}) and (\ref{SLANT}) we have
following Lemma.

\begin{lemma}
\label{pisi2}Let $F$ be a slant Riemannian submersion from a cosymplectic
manifold $M(\phi ,\xi ,\eta ,g_{M})$ onto a Riemannian manifold $(N,g_{N})$
with slant angle $\theta .$Then the following relations are valid 
\begin{equation}
g_{M}(\psi U,\psi V)=\cos ^{2}\theta (g_{M}(U,V)-\eta (U)\eta (V)),
\label{COS5A}
\end{equation}%
\begin{equation}
g_{M}(\omega U,\omega V)=\sin ^{2}\theta (g_{M}(U,V)-\eta (U)\eta (V))
\label{COS6}
\end{equation}%
for any $U,V\in \Gamma (\ker F_{\ast }).$
\end{lemma}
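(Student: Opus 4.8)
The plan is to derive both identities directly from the slant characterization in Theorem~\ref{pisi}, namely $\psi^{2}=-\cos^{2}\theta\,(I-\eta\otimes\xi)$, together with the compatibility relations already established. The two equations are of a very similar flavour, so I would handle \eqref{COS5A} first and then reduce \eqref{COS6} to it.

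For \eqref{COS5A}, first I would start from $g_{M}(\psi U,\psi V)$ and use the skew-symmetry relation \eqref{ANTT}, which gives $g_{M}(\psi U,\psi V)=-g_{M}(U,\psi^{2}V)$. At this point I would substitute the slant identity \eqref{SLANT} for $\psi^{2}V$, obtaining $g_{M}(\psi U,\psi V)=-g_{M}(U,-\cos^{2}\theta\,(V-\eta(V)\xi))=\cos^{2}\theta\,(g_{M}(U,V)-\eta(V)g_{M}(U,\xi))$. Finally, using $g_{M}(U,\xi)=\eta(U)$ from \eqref{metric}, this is exactly $\cos^{2}\theta\,(g_{M}(U,V)-\eta(U)\eta(V))$, which proves the first relation.

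For \eqref{COS6}, I would exploit the decomposition $\phi U=\psi U+\omega U$ from \eqref{TAN}, in which $\psi U$ is vertical and $\omega U$ is horizontal, so the two pieces are orthogonal. Applying the compatibility of $g_M$ with $\phi$ from \eqref{metric} to the pair $U,V$ gives $g_{M}(\phi U,\phi V)=g_{M}(U,V)-\eta(U)\eta(V)$, while expanding the left side using orthogonality of the vertical and horizontal components yields $g_{M}(\phi U,\phi V)=g_{M}(\psi U,\psi V)+g_{M}(\omega U,\omega V)$. Subtracting the already-proved \eqref{COS5A} then isolates $g_{M}(\omega U,\omega V)=(1-\cos^{2}\theta)(g_{M}(U,V)-\eta(U)\eta(V))=\sin^{2}\theta\,(g_{M}(U,V)-\eta(U)\eta(V))$, which is the second relation.

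I do not expect a genuine obstacle here, since every ingredient is already in hand; the computation is short and purely algebraic. The only point requiring a little care is the bookkeeping of the $\eta$-terms: one must consistently use $g_{M}(U,\xi)=\eta(U)$ and remember that $\xi$ lies in $\ker F_{\ast}$ (so it is legitimately a vertical argument of these tensors), ensuring the correction terms $\eta(U)\eta(V)$ come out with the right sign. Otherwise the derivation is a direct unwinding of Theorem~\ref{pisi} and \eqref{metric}.
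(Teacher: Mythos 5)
Your argument is correct and coincides with the paper's own (the paper states the lemma follows "by using (\ref{metric}), (\ref{TAN}), (\ref{ANTT}) and (\ref{SLANT})", which is precisely the combination you employ: skew-symmetry of $\psi$ plus the slant identity for \eqref{COS5A}, then orthogonal decomposition of $\phi U$ and metric compatibility for \eqref{COS6}). Nothing further is needed.
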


We denote the complementary orthogonal distribution to $\omega (\ker F_{\ast
})$ in $(\ker F_{\ast })^{\bot }$ by $\mu .$ Then we have%
\begin{equation}
(\ker F_{\ast })^{\bot }=\omega (\ker F_{\ast })\oplus \mu .  \label{A1}
\end{equation}

\begin{lemma}
\label{ir}Let $F$ be a proper slant Riemannian submersion from a
cosymplectic manifold $M(\phi ,\xi ,\eta ,g_{M})$ onto a Riemannian manifold 
$(N,g_{N})$ then $\mu $ is an invariant distribution of $(\ker F_{\ast
})^{\bot },$ under the endomorphism $\phi $.
\end{lemma}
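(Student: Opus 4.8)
The goal is to show that $\mu$ is $\phi$-invariant, i.e.\ $\phi(\mu)\subseteq\mu$. Recall that $\mu$ is defined as the orthogonal complement of $\omega(\ker F_{\ast})$ inside $(\ker F_{\ast})^{\bot}$, so by the decomposition (\ref{A1}) every horizontal vector splits into a piece in $\omega(\ker F_{\ast})$ and a piece in $\mu$. The plan is to take an arbitrary $Z\in\Gamma(\mu)$ and verify that $\phi Z$ is again horizontal and that it is orthogonal to every element of $\omega(\ker F_{\ast})$; these two facts together place $\phi Z$ in $\mu$.

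First I would show $\phi Z$ is horizontal. For $Z\in\Gamma(\mu)\subseteq\Gamma((\ker F_{\ast})^{\bot})$, using the decomposition (\ref{NOR}) we write $\phi Z=\mathcal{B}Z+\mathcal{C}Z$, where $\mathcal{B}Z$ is vertical and $\mathcal{C}Z$ is horizontal. So it suffices to prove $\mathcal{B}Z=0$. To see this, I would compute $g_{M}(\mathcal{B}Z,U)$ for an arbitrary vertical $U\in\Gamma(\ker F_{\ast})$. Since $\mathcal{B}Z$ is the vertical part of $\phi Z$, we have $g_{M}(\mathcal{B}Z,U)=g_{M}(\phi Z,U)$, and by the skew-adjointness coming from (\ref{metric}) this equals $-g_{M}(Z,\phi U)=-g_{M}(Z,\psi U+\omega U)$ using (\ref{TAN}). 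Now $\psi U$ is vertical, so it is orthogonal to the horizontal vector $Z$; and $\omega U\in\omega(\ker F_{\ast})$, which is orthogonal to $\mu\ni Z$ by the very definition of $\mu$. Hence $g_{M}(\mathcal{B}Z,U)=0$ for all $U$, forcing $\mathcal{B}Z=0$ and $\phi Z=\mathcal{C}Z\in\Gamma((\ker F_{\ast})^{\bot})$.

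It remains to show $\phi Z$ lies in $\mu$ rather than having a component in $\omega(\ker F_{\ast})$; equivalently $g_{M}(\phi Z,\omega U)=0$ for all $U\in\Gamma(\ker F_{\ast})$. Again by skew-adjointness, $g_{M}(\phi Z,\omega U)=-g_{M}(Z,\phi\omega U)$. The natural route is to expand $\phi\omega U$ and identify its vertical and horizontal parts, using the structure relations of Lemma~\ref{LEM} (for instance the identity $0=\omega\psi U+\mathcal{C}\omega U$, which controls the horizontal part $\mathcal{C}\omega U$ of $\phi\omega U$, together with $\phi^{2}U=\psi^{2}U+\mathcal{B}\omega U$ governing the vertical part). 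Since $Z\in\mu$ is horizontal, only the horizontal part of $\phi\omega U$ can pair nontrivially with $Z$, and that horizontal part $\mathcal{C}\omega U=-\omega\psi U$ again lies in $\omega(\ker F_{\ast})$, which is orthogonal to $Z$. This yields $g_{M}(\phi Z,\omega U)=0$, completing the argument.

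The main obstacle is the second step: the first step (horizontality of $\phi Z$) is a clean one-line orthogonality computation, but confirming that $\phi Z$ has no $\omega(\ker F_{\ast})$-component requires correctly resolving $\phi\omega U$ into its parts and invoking the right identity from Lemma~\ref{LEM}. I expect that the cleanest way to organize this is to note that $\phi$ preserves the horizontal distribution's subspace structure here precisely because $\phi$ is skew-adjoint and $\mu\perp\omega(\ker F_{\ast})$ by construction, so the whole proof reduces to two applications of (\ref{metric}) bracketing the decomposition (\ref{A1}); one should be careful that the vertical structure vector $\xi$ (if vertical) plays no role since $Z$ is horizontal and $\phi\xi=0$.
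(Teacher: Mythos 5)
Your proposal is correct and follows essentially the same route as the paper: both arguments reduce the claim to checking that $\phi Z$ is orthogonal to $\ker F_{\ast}$ and to $\omega(\ker F_{\ast})$, using the skew-adjointness of $\phi$ coming from (\ref{metric}) together with the fact that the horizontal part of $\phi\psi V$ (equivalently of $\phi\omega U$) lands back in $\omega(\ker F_{\ast})$. The only cosmetic difference is that the paper carries out the second check by expanding $\omega V=\phi V-\psi V$ and invoking (\ref{SLANT}), whereas you invoke the identity $\mathcal{C}\omega U=-\omega\psi U$ from Lemma~\ref{LEM}; these are equivalent.
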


\begin{proof}
For $X\in \Gamma (\mu ),$ from (\ref{metric}) and (\ref{TAN}), we obtain%
\begin{eqnarray*}
g_{M}(\phi X,\omega V) &=&g_{M}(\phi X,\phi V)-g_{M}(\phi X,\psi V) \\
&=&g_{M}(X,V)-\eta (X)\eta (V)-g_{M}(\phi X,\psi V) \\
&=&+g_{M}(X,\phi \psi V).
\end{eqnarray*}%
Using (\ref{SLANT}) and (\ref{A1}) we have%
\begin{eqnarray*}
g_{M}(\phi X,\omega V) &=&-\cos ^{2}\theta g_{M}(X,V-\eta (V)\xi ) \\
&=&g_{M}(X,\omega \psi V) \\
&=&0.
\end{eqnarray*}%
In a similar way, we have $g_{M}(\phi X,U)=-g_{M}(X,\phi U)=0$ due to $\phi
U\in \Gamma ((\ker F_{\ast })\oplus \omega (\ker F_{\ast }))$ for $X\in
\Gamma (\mu )$ and $U\in \Gamma (\ker F_{\ast }).$Thus the proof of the
lemma is completed.
\end{proof}

By help (\ref{COS6}), we can give following

\begin{corollary}
\label{teta}Let $F$ be a proper slant Riemannian submersion from a
cosymplectic manifold $M^{2m+1}(\phi ,\xi ,\eta ,g_{M})$ onto a Riemannian
manifold $(N^{n},g_{N}).$Let%
\begin{equation*}
\left\{ e_{1},e_{2},...e_{2m-n},\xi \right\}
\end{equation*}%
be a local orthonormal basis of $(\ker F_{\ast }),$ then $\left\{ \csc
\theta we_{1},\csc \theta we_{2},...,\csc \theta we_{2m-n}\right\} $ is a
local orthonormal basis of $\omega (\ker F_{\ast }).$
\end{corollary}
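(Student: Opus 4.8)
The plan is to read off orthonormality directly from the norm formula (\ref{COS6}) of Lemma \ref{pisi2}, and then to fix the spanning/dimension count using the fact that $\omega$ annihilates $\xi$.

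First I would record two elementary facts. Since $\{e_1,\dots,e_{2m-n},\xi\}$ is orthonormal, each $e_i$ is orthogonal to $\xi$, so $\eta(e_i)=g_M(e_i,\xi)=0$. Also, from $\phi\xi=0$ in (\ref{fi}) together with the decomposition $\phi\xi=\psi\xi+\omega\xi$ into its vertical and horizontal parts, both $\psi\xi$ and $\omega\xi$ must vanish; in particular $\omega\xi=0$. Consequently $\omega(\ker F_{\ast})$ is already spanned by the $2m-n$ vectors $\omega e_1,\dots,\omega e_{2m-n}$.

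Next I would apply (\ref{COS6}) with $U=e_i$ and $V=e_j$. Because $\eta(e_i)=\eta(e_j)=0$, the formula collapses to $g_M(\omega e_i,\omega e_j)=\sin^2\theta\,g_M(e_i,e_j)=\sin^2\theta\,\delta_{ij}$. Since $F$ is proper we have $\theta\neq 0$, hence $\sin\theta\neq 0$ and $\csc\theta$ is well defined; rescaling then gives $g_M(\csc\theta\,\omega e_i,\csc\theta\,\omega e_j)=\delta_{ij}$. Thus the family $\{\csc\theta\,\omega e_i\}_{i=1}^{2m-n}$ is orthonormal, and in particular linearly independent.

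Finally, I would close the argument by combining the two observations: the independent family $\{\csc\theta\,\omega e_i\}_{i=1}^{2m-n}$ consists of scalar multiples of the spanning set $\{\omega e_i\}$ of $\omega(\ker F_{\ast})$ produced in the first step, so these $2m-n$ orthonormal vectors form an orthonormal basis of $\omega(\ker F_{\ast})$. There is essentially no genuine obstacle here beyond bookkeeping; the only points requiring care are to justify that $\csc\theta$ is defined (i.e. $\theta\neq 0$, guaranteed by properness) and to use $\omega\xi=0$ so that $\xi$ drops out of the spanning set and the count matches $\dim\omega(\ker F_{\ast})=2m-n$.
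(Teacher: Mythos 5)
Your proof is correct and follows exactly the route the paper intends: the paper offers no written proof beyond the remark ``By help (\ref{COS6})'', and your argument is precisely the direct application of (\ref{COS6}) with $\eta(e_i)=0$, together with the observations that $\omega\xi=0$ and that properness guarantees $\sin\theta\neq 0$. No discrepancy to report.
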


By using (\ref{A1}) and Corollary \ref{teta} one can easily prove the
following Proposition.

\begin{proposition}
\label{mu}Let $F$ be a proper slant Riemannian submersion from a
cosymplectic manifold $M^{2m+1}(\phi ,\xi ,\eta ,g_{M})$ onto a Riemannian
manifold $(N^{n},g_{N}).$Then dim$(\mu )=2(n-m).$ If $\mu =\left\{ 0\right\}
,$then $n=m.$
\end{proposition}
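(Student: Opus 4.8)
The plan is to prove Proposition \ref{mu} by a direct dimension count, using only the orthonormal basis supplied by Corollary \ref{teta} together with the orthogonal splitting (\ref{A1}). No curvature or connection identities are needed; everything reduces to bookkeeping of ranks.

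First I would record the two relevant dimensions. Since $F$ is a Riemannian submersion onto $N^{n}$, the differential $F_{\ast }$ restricts to a linear isomorphism from the horizontal space $(\ker F_{\ast })^{\bot }$ onto $T_{F(p)}N$, so $\dim (\ker F_{\ast })^{\bot }=n$. On the vertical side, the fibers have dimension $\dim M-\dim N=(2m+1)-n=2m-n+1$, which is exactly the number of vectors in the basis $\{e_{1},\dots ,e_{2m-n},\xi \}$ furnished by Corollary \ref{teta}.

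Next I would compute $\dim \omega (\ker F_{\ast })$. By Corollary \ref{teta} the set $\{\csc \theta \,\omega e_{1},\dots ,\csc \theta \,\omega e_{2m-n}\}$ is an orthonormal basis of $\omega (\ker F_{\ast })$, so $\dim \omega (\ker F_{\ast })=2m-n$. Feeding this into the orthogonal decomposition $(\ker F_{\ast })^{\bot }=\omega (\ker F_{\ast })\oplus \mu $ of (\ref{A1}) gives $\dim \mu =n-(2m-n)=2(n-m)$. Finally, imposing $\mu =\{0\}$ forces $2(n-m)=0$, that is, $n=m$, which closes the argument.

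The computation is entirely elementary, so I do not anticipate a genuine obstacle; the only point requiring care is to notice that although $\xi $ belongs to the vertical basis, it contributes nothing to $\omega (\ker F_{\ast })$, because $\phi \xi =0$ in (\ref{fi}) forces $\omega \xi =0$. This is precisely why the image distribution has dimension $2m-n$ rather than $2m-n+1$, and it is what makes the count for $\mu $ come out as the even number $2(n-m)$.
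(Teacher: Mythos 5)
Your proof is correct and follows exactly the route the paper intends: the paper gives no written proof but states that the proposition follows from the decomposition (\ref{A1}) together with Corollary \ref{teta}, which is precisely the dimension count you carry out. Your remark that $\phi \xi =0$ forces $\omega \xi =0$, so that $\xi$ does not contribute to $\omega (\ker F_{\ast })$, is the one small point worth making explicit, and you make it.
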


By (\ref{ANTT}) and (\ref{COS5A}) we have

\begin{lemma}
\label{tetaapisi}Let $F$ be a proper slant Riemannian submersion from a
cosymplectic manifold $M^{2m+1}(\phi ,\xi ,\eta ,g_{M})$ onto a Riemannian
manifold $(N^{n},g_{N}).$ If $e_{1},e_{2},...e_{k},\xi $ are orthogonal unit
vector fields in $(\ker F_{\ast }),$then%
\begin{equation*}
\left\{ e_{1},\sec \theta \psi e_{1},e_{2},\sec \theta \psi
e_{2},...e_{k},\sec \theta \psi e_{k},\xi \right\}
\end{equation*}%
is a local orthonormal basis of $(\ker F_{\ast }).$ Moreover dim$(\ker
F_{\ast })=2m-n+1=2k+1$ and $\dim N=n=2(m-k).$
\end{lemma}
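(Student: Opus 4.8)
The plan is to analyze the endomorphism $\psi $ on the distribution $D:=(\ker F_{\ast })\ominus \mathrm{span}\{\xi \}$, i.e. the orthogonal complement of $\xi $ inside $\ker F_{\ast }$. First I would check that $\psi $ preserves $D$: since $\phi $ is skew-symmetric for $g_{M}$ and $\phi \xi =0$, for $U\in D$ one has $g_{M}(\psi U,\xi )=g_{M}(\phi U,\xi )=-g_{M}(U,\phi \xi )=0$, so $\psi U\in D$. On $D$ the relation (\ref{SLANT}) reduces to $\psi ^{2}=-\cos ^{2}\theta \,I$ (because $\eta $ vanishes on $D$), and (\ref{ANTT}) says $\psi $ is skew-symmetric. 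Hence $J:=\sec \theta \,\psi $ is well defined on $D$ (here properness gives $\cos \theta \neq 0$), satisfies $J^{2}=-I$, and is orthogonal, since $g_{M}(JU,JV)=\sec ^{2}\theta \,g_{M}(\psi U,\psi V)=g_{M}(U,V)$ for $U,V\in D$ by (\ref{COS5A}). Thus $J$ is an almost Hermitian structure on $(D,g_{M}|_{D})$.

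Next I would build the adapted frame inductively, exactly as for an almost Hermitian vector space. Choose a unit $e_{1}\in D$; then $Je_{1}=\sec \theta \,\psi e_{1}$ is a unit vector by (\ref{COS5A}) and is orthogonal to $e_{1}$, because $g_{M}(e_{1},\psi e_{1})=-g_{M}(\psi e_{1},e_{1})=0$ by (\ref{ANTT}). The orthogonal complement of $\mathrm{span}\{e_{1},Je_{1}\}$ in $D$ is again $J$-invariant (as $J$ is orthogonal and the span is $J$-invariant), so I pick a unit $e_{2}$ in it and repeat. Iterating produces orthogonal unit vectors $e_{1},\dots ,e_{k}$ for which $\{e_{1},\sec \theta \,\psi e_{1},\dots ,e_{k},\sec \theta \,\psi e_{k}\}$ is orthonormal; the cross terms $g_{M}(e_{i},\psi e_{j})$ and $g_{M}(\psi e_{i},\psi e_{j})$ for $i\neq j$ vanish by (\ref{ANTT}) and (\ref{COS5A}) together with the choice of the $e_{j}$. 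Adjoining the unit vector $\xi $, which is orthogonal to all of $D$, yields the claimed orthonormal basis of $\ker F_{\ast }$.

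Finally the dimension count follows formally. The construction forces $\dim D=2k$ to be even, so $\dim (\ker F_{\ast })=2k+1$. Since $F$ is a Riemannian submersion of $M^{2m+1}$ onto $N^{n}$, the fibre dimension equals $\dim (\ker F_{\ast })=(2m+1)-n$, whence $2k+1=2m-n+1$ and $n=2(m-k)$.

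The main obstacle is not any single computation but the inductive step: one must verify that after splitting off $\mathrm{span}\{e_{i},Je_{i}\}$ the remaining orthogonal complement in $D$ is still $J$-invariant, so that the process continues and terminates with $D$ even-dimensional. This is precisely where the skew-symmetry (\ref{ANTT}) of $\psi $ is indispensable; once that is in hand, the verification that each $\sec \theta \,\psi e_{i}$ is a unit vector and that all cross terms vanish is the routine part, handled by (\ref{COS5A}). Note also that the literal hypothesis that the $e_{i}$ are merely orthogonal unit vectors should be read as the $\psi $-adapted choice produced by this induction, since for arbitrary orthogonal $e_{i}$ the vectors $\psi e_{j}$ need not be orthogonal to $e_{i}$.
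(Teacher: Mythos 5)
Your proposal is correct and follows the same route the paper intends: the paper offers no written proof beyond the remark that the lemma follows from (\ref{ANTT}) and (\ref{COS5A}), and your argument is exactly the standard fleshing-out of that remark, namely that $\sec\theta\,\psi$ is a compatible almost complex structure on the orthogonal complement of $\xi$ in $\ker F_{\ast}$, whence the adapted frame and the parity of the dimensions. Your closing caveat --- that the $e_{i}$ must be taken as the $\psi$-adapted vectors produced by the induction rather than arbitrary orthogonal unit vectors --- is a fair and correct reading of the lemma's loosely worded hypothesis.
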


\begin{lemma}
\label{TT}Let $F$ be a slant Riemannian submersion from a cosymplectic
manifold $M(\phi ,\xi ,\eta ,g_{M})$ onto a Riemannian manifold $(N,g_{N})$%
.\ If $\omega $ is parallel then we have%
\begin{equation}
\mathcal{T}_{\psi U}\psi U=-\cos ^{2}\theta \mathcal{T}_{U}U.  \label{SEC1}
\end{equation}

\begin{proof}
If $\omega $ is parallel, from (\ref{W}), we obtain $\mathcal{C}T_{U}V=%
\mathcal{T}_{U}\psi V$ for $U,V\in \Gamma (\ker F_{\ast }).$We interchange $%
U $ and $V$ and use (\ref{TUW}) we get 
\begin{equation*}
\mathcal{T}_{U}\psi V=\mathcal{T}_{V}\psi U.
\end{equation*}%
Substituting $V$ by $\psi U$ in the above equation and then using Theorem 2
we get the required formula.
\end{proof}
\end{lemma}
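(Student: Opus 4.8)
The plan is to turn the parallelism of $\omega$ into a pointwise algebraic identity and then feed it into the structure equation $\psi^2=-\cos^2\theta(I-\eta\otimes\xi)$ supplied by Theorem \ref{pisi}. First I would unwind the hypothesis: since $\omega$ is parallel, the left-hand side of (\ref{W}) vanishes, so I obtain the working relation $\mathcal{C}\mathcal{T}_U V=\mathcal{T}_U\psi V$ for all $U,V\in\Gamma(\ker F_{\ast})$. Everything else is extracted from this single equation.

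Next I would exploit symmetry. The working relation holds for every pair of vertical fields, so interchanging $U$ and $V$ also gives $\mathcal{C}\mathcal{T}_V U=\mathcal{T}_V\psi U$. Since $\mathcal{T}_U V=\mathcal{T}_V U$ by Lemma \ref{T}, applying the linear bundle map $\mathcal{C}$ shows the two left-hand sides coincide, and hence $\mathcal{T}_U\psi V=\mathcal{T}_V\psi U$. This short step is the conceptual heart of the argument and the one I expect to require the most care, because it hinges on transporting the symmetry of $\mathcal{T}$ through $\mathcal{C}$ without losing information; it is where the parallelism hypothesis is genuinely consumed.

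Finally, I would specialize. Setting $V=\psi U$ in the symmetry relation yields $\mathcal{T}_{\psi U}\psi U=\mathcal{T}_U\psi^2 U$. Invoking Theorem \ref{pisi} to substitute $\psi^2 U=-\cos^2\theta\,(U-\eta(U)\xi)$, and then using linearity of $\mathcal{T}$ together with the vanishing $\mathcal{T}_U\xi=0$ recorded in (\ref{CONNECTION}), collapses the right-hand side to $-\cos^2\theta\,\mathcal{T}_U U$. This is exactly the asserted formula (\ref{SEC1}), completing the proof. The only potential subtlety beyond the symmetry step is keeping track of the $\eta(U)\xi$ term produced by $\psi^2$, which is harmless precisely because $\mathcal{T}_U\xi=0$.
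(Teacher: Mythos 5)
Your proposal is correct and follows essentially the same route as the paper: vanishing of $(\nabla_U\omega)V$ gives $\mathcal{C}\mathcal{T}_UV=\mathcal{T}_U\psi V$, symmetry of $\mathcal{T}$ yields $\mathcal{T}_U\psi V=\mathcal{T}_V\psi U$, and substituting $V=\psi U$ together with $\psi^2=-\cos^2\theta(I-\eta\otimes\xi)$ gives the formula. Your explicit handling of the $\eta(U)\xi$ term via $\mathcal{T}_U\xi=0$ is a detail the paper leaves implicit, but the argument is the same.
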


We give a sufficient condition for a slant Riemannian submersion to be
harmonic as an analogue of a slant Riemannian submersion from a cosymplectic
manifold onto a Riemannian manifold in \cite{SAHIN1}.

\begin{theorem}
\label{HAR}Let $F$ be a slant Riemannian submersion from a cosymplectic
manifold $M(\phi ,\xi ,\eta ,g_{M})$ onto a Riemannian manifold $(N,g_{N})$
\ If $\omega $ is parallel then $F$ is a harmonic map.
\end{theorem}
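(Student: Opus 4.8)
The plan is to show that the tension field $\tau(F)$ vanishes identically; by the discussion around (\ref{6}) this is exactly what harmonicity means. Fixing a local orthonormal frame on $M$ adapted to the decomposition $TM=\ker F_\ast\oplus(\ker F_\ast)^{\bot}$, I would split the trace in (\ref{6}) into a horizontal and a vertical part. The horizontal basis vectors contribute nothing: for $X$ horizontal, $(\nabla F_\ast)(X,X)=0$ by (\ref{5a}). Hence the whole tension field is carried by the vertical distribution.

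Next I would compute $(\nabla F_\ast)(V,V)$ for a vertical vector field $V$ using (\ref{5}). Since $F_\ast V=0$, the first term $\nabla_V^{F}F_\ast(V)$ drops out, and writing $\nabla_V^{M}V=\mathcal{T}_V V+\hat\nabla_V V$ via (\ref{1}), the vertical part $\hat\nabla_V V$ is annihilated by $F_\ast$. This gives $(\nabla F_\ast)(V,V)=-F_\ast(\mathcal{T}_V V)$, and therefore $\tau(F)=-F_\ast\big(\sum_j \mathcal{T}_{U_j}U_j\big)$ for any local orthonormal basis $\{U_j\}$ of $\ker F_\ast$. Because $F_\ast$ restricts to a linear isomorphism on the horizontal distribution and the summand $\sum_j\mathcal{T}_{U_j}U_j=(m-n)H$ is horizontal, it now suffices to prove that the fibres are minimal, i.e. that this sum vanishes.

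The decisive step is to evaluate the (frame-independent) sum on the special basis furnished by Lemma \ref{tetaapisi}, namely $\{e_1,\sec\theta\,\psi e_1,\dots,e_k,\sec\theta\,\psi e_k,\xi\}$. Then
\[
\sum_j\mathcal{T}_{U_j}U_j=\sum_{i=1}^{k}\big(\mathcal{T}_{e_i}e_i+\sec^2\theta\,\mathcal{T}_{\psi e_i}\psi e_i\big)+\mathcal{T}_\xi\xi .
\]
The last term vanishes by (\ref{CONNECTION}). For each paired term the hypothesis that $\omega$ is parallel lets me invoke Lemma \ref{TT}, which gives $\mathcal{T}_{\psi e_i}\psi e_i=-\cos^2\theta\,\mathcal{T}_{e_i}e_i$; multiplying by $\sec^2\theta$ produces $-\mathcal{T}_{e_i}e_i$, so every bracket cancels. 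Thus $\sum_j\mathcal{T}_{U_j}U_j=0$, the mean curvature vector $H$ of the fibres vanishes, and consequently $\tau(F)=0$, so $F$ is harmonic.

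I expect the only delicate point to be the reduction of $\tau(F)$ to the single term $-F_\ast(\sum_j\mathcal{T}_{U_j}U_j)$: one must check carefully that the horizontal frame vectors genuinely contribute nothing through (\ref{5a}) and that $F_\ast$ kills every vertical vector, so that only the fibre mean curvature survives in the trace. Once this reduction is in place and the adapted frame of Lemma \ref{tetaapisi} is chosen, the pairwise cancellation coming from Lemma \ref{TT} is immediate, and no further curvature computation is needed.
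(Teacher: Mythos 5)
Your proposal is correct and is essentially the paper's own argument: the paper only cites \cite{KUPMUR} and \cite{SAHIN1} for this proof, but the explicit computation it gives for the horizontal-$\xi$ analogue (Theorem \ref{HOR6}) is exactly your reduction of $\tau(F)$ to $-F_{\ast}\bigl(\sum_j\mathcal{T}_{U_j}U_j\bigr)$ followed by pairwise cancellation via $\mathcal{T}_{\psi e_i}\psi e_i=-\cos^2\theta\,\mathcal{T}_{e_i}e_i$, with the extra term $\mathcal{T}_\xi\xi=0$ handled just as you do. The only caveat, shared with the paper, is that the adapted basis of Lemma \ref{tetaapisi} presupposes the submersion is proper (so that $\sec\theta$ makes sense).
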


\begin{proof}
The proof is similar to the proof of (Theorem 4 of \cite{KUPMUR} and \cite%
{SAHIN1}).
\end{proof}

Now putting $Q=\psi ^{2}$,we define $\nabla Q$ by%
\begin{equation}
(\nabla _{U}Q)V=\mathcal{V}\nabla _{U}QV-Q\hat{\nabla}_{U}V  \label{VERQ}
\end{equation}%
for any $U,V\in \Gamma (\ker F_{\ast }).$

\begin{theorem}
\label{QVER}Let $F$ be a slant Riemannian submersion from a cosymplectic
manifold $M(\phi ,\xi ,\eta ,g_{M})$ onto a Riemannian manifold $(N,g_{N})$%
.\ Then $\nabla Q=0.$
\end{theorem}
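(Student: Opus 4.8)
The plan is to reduce everything to the explicit form of $Q$ furnished by Theorem \ref{pisi}. Since $F$ is slant, (\ref{SLANT}) gives $Q=\psi^{2}=-\cos^{2}\theta\,(I-\eta\otimes\xi)$ with $\cos^{2}\theta$ a constant, so $QV=-\cos^{2}\theta\,(V-\eta(V)\xi)$ for any vertical $V$. Because $Q$ maps $\ker F_{\ast}$ into itself, $QV$ is vertical, and hence $\mathcal{V}\nabla_{U}QV=\hat{\nabla}_{U}QV$ by (\ref{1}); thus (\ref{VERQ}) simplifies to $(\nabla_{U}Q)V=\hat{\nabla}_{U}QV-Q\hat{\nabla}_{U}V$. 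So the entire statement is really a short computation once $Q$ is written out.

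Next I would substitute the explicit expression for $Q$ into both terms. The constant $\cos^{2}\theta$ pulls out, and expanding $\hat{\nabla}_{U}(V-\eta(V)\xi)$ by the Leibniz rule produces a term $U(\eta(V))\xi$ together with $\eta(V)\hat{\nabla}_{U}\xi$. Here the cosymplectic hypothesis enters decisively: by (\ref{xzeta}) we have $\nabla_{U}\xi=0$, whence $\hat{\nabla}_{U}\xi=\mathcal{V}\nabla_{U}\xi=0$, so the last term drops out. After cancelling the two copies of $\cos^{2}\theta\,\hat{\nabla}_{U}V$, one is left with $(\nabla_{U}Q)V=\cos^{2}\theta\,\bigl(U(\eta(V))-\eta(\hat{\nabla}_{U}V)\bigr)\xi$.

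The crux is therefore to show that the scalar coefficient of $\xi$ vanishes, i.e. that $U(\eta(V))=\eta(\hat{\nabla}_{U}V)$. I would write $\eta(V)=g_{M}(V,\xi)$ and differentiate using metric compatibility of $\nabla$: $U(\eta(V))=g_{M}(\nabla_{U}V,\xi)+g_{M}(V,\nabla_{U}\xi)$, where the second summand is again zero by (\ref{xzeta}). Decomposing $\nabla_{U}V=\mathcal{T}_{U}V+\hat{\nabla}_{U}V$ as in (\ref{1}) and noting that $\mathcal{T}_{U}V$ is horizontal while $\xi$ is vertical, the $\mathcal{T}$-part contributes nothing, leaving $U(\eta(V))=g_{M}(\hat{\nabla}_{U}V,\xi)=\eta(\hat{\nabla}_{U}V)$. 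This forces $(\nabla_{U}Q)V=0$ for all vertical $U,V$, which is the assertion.

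The only place demanding care—the main obstacle, such as it is—is the bookkeeping of the $\eta\otimes\xi$ term: one must invoke $\nabla\xi=0$ twice (once to kill $\hat{\nabla}_{U}\xi$, once inside the metric derivative of $\eta(V)$) and use the horizontality of $\mathcal{T}_{U}V$ relative to the vertical $\xi$. Everything else is a direct substitution of the constant-coefficient formula for $Q$, so no curvature identities or the tensors $\mathcal{A}$ are needed at all.
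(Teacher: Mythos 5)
Your proposal is correct and follows essentially the same route as the paper: both apply the constant-coefficient formula $Q=-\cos^{2}\theta\,(I-\eta\otimes\xi)$ from Theorem \ref{pisi} to compute $\mathcal{V}\nabla_{U}QV$ and $Q\hat{\nabla}_{U}V$ separately and observe that they agree. Your write-up merely supplies the intermediate steps (using $\nabla\xi=0$ and the horizontality of $\mathcal{T}_{U}V$ to identify $U(\eta(V))$ with $\eta(\hat{\nabla}_{U}V)$) that the paper leaves implicit in passing to its equation (\ref{Q2}).
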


\begin{proof}
From (\ref{SLANT}), 
\begin{equation}
Q\hat{\nabla}_{U}V=-\cos ^{2}\theta (\hat{\nabla}_{U}V-\eta (\hat{\nabla}%
_{U}V)\xi )  \label{Q1}
\end{equation}%
for each $U,V\in \Gamma (\ker F_{\ast }),$ where $\theta $ is slant angle.
On the other hand,%
\begin{equation}
\mathcal{V}(\nabla _{U}QV)=-\cos ^{2}\theta (\hat{\nabla}_{U}V-\eta (\hat{%
\nabla}_{U}V)\xi .  \label{Q2}
\end{equation}%
Combining (\ref{Q1}) and (\ref{Q2}), we obtain $(\nabla _{U}Q)V=0.$ This
completes the proof.
\end{proof}

We now investigate the geometry of leaves of ($\ker F_{\ast })^{\perp }$ and 
$\ker F_{\ast }.$

\begin{theorem}
\label{HAB}Let $F$ be a slant Riemannian submersion from a cosymplectic
manifold $M(\phi ,\xi ,\eta ,g_{M})$ onto a Riemannian manifold $(N,g_{N}).$%
Then the distribution $(\ker F_{\ast })^{\bot }$ defines a totally geodesic
foliation on $M$ if and only if 
\begin{equation*}
g_{M}(\mathcal{H}\nabla _{X}Y,\omega \psi U)=g_{M}(\mathcal{A}_{X}\mathcal{B}%
Y,\omega U)+g_{M}(\mathcal{H}\nabla _{X}\mathcal{C}Y,\omega U)
\end{equation*}%
for any $X,Y\in \Gamma ((\ker F_{\ast })^{\bot })$ and $U\in \Gamma (\ker
F_{\ast }).$
\end{theorem}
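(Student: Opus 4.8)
The plan is to reduce the totally geodesic foliation condition to the vanishing of the vertical part of $\nabla_X Y$. By definition $(\ker F_\ast)^\bot$ defines a totally geodesic foliation on $M$ exactly when $\nabla_X Y \in \Gamma((\ker F_\ast)^\bot)$ for all $X,Y \in \Gamma((\ker F_\ast)^\bot)$, i.e. when $g_M(\nabla_X Y, U)=0$ for every $U \in \Gamma(\ker F_\ast)$. So the entire argument amounts to expanding $g_M(\nabla_X Y, U)$ and solving for this quantity.

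First I would exploit that $\xi$ is vertical together with $\nabla\xi=0$ (\ref{xzeta}): this forces $\eta(X)=\eta(Y)=0$ and $\eta(\nabla_X Y)=g_M(\nabla_X Y,\xi)=-g_M(Y,\nabla_X\xi)=0$. Hence from the compatibility relation (\ref{metric}) the $\eta$-terms drop and $g_M(\nabla_X Y,U)=g_M(\phi\nabla_X Y,\phi U)$. Invoking the cosymplectic parallelism (\ref{nambla}) in the form $\phi\nabla_X Y=\nabla_X\phi Y$, and decomposing $\phi U=\psi U+\omega U$ by (\ref{TAN}) and $\phi Y=\mathcal{B}Y+\mathcal{C}Y$ by (\ref{NOR}), I split $g_M(\nabla_X\phi Y,\phi U)$ into a $\psi U$-piece and an $\omega U$-piece.

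The key step is the $\psi U$-piece. Here I would move $\phi$ back across using its skew-symmetry (a consequence of (\ref{metric})), $g_M(\phi\nabla_X Y,\psi U)=-g_M(\nabla_X Y,\phi\psi U)$, and expand $\phi\psi U=\psi^2 U+\omega\psi U$ with (\ref{TAN}). Substituting the slant identity (\ref{SLANT}), $\psi^2 U=-\cos^2\theta\,(U-\eta(U)\xi)$, manufactures a term $\cos^2\theta\,g_M(\nabla_X Y,U)$ (the $\eta(U)\xi$ summand again vanishes since $g_M(\nabla_X Y,\xi)=0$) plus $-g_M(\mathcal{H}\nabla_X Y,\omega\psi U)$, using that $\omega\psi U$ is horizontal. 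For the $\omega U$-piece I would use the O'Neill formulas (\ref{3}) and (\ref{4}): since $\mathcal{B}Y$ is vertical while $\mathcal{C}Y$ and $\omega U$ are horizontal, only the summands $g_M(\mathcal{A}_X\mathcal{B}Y,\omega U)$ and $g_M(\mathcal{H}\nabla_X\mathcal{C}Y,\omega U)$ survive.

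Collecting the pieces and moving the $\cos^2\theta$ term to the left gives the single identity
\[
\sin^2\theta\,g_M(\nabla_X Y,U)=g_M(\mathcal{A}_X\mathcal{B}Y,\omega U)+g_M(\mathcal{H}\nabla_X\mathcal{C}Y,\omega U)-g_M(\mathcal{H}\nabla_X Y,\omega\psi U).
\]
For a proper slant submersion $\sin^2\theta\neq0$, so $g_M(\nabla_X Y,U)=0$ for all such $X,Y,U$ if and only if the right-hand side vanishes, which is precisely the stated relation; this establishes the equivalence. I expect the only real subtlety to be the bookkeeping in the last two steps---tracking which O'Neill tensor each component pairs with, and the double use of $\phi$ (first through the parallelism (\ref{nambla}), then through skew-symmetry) that produces the $\cos^2\theta$ term recombining with the left-hand side into $\sin^2\theta$. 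Everything else is routine.
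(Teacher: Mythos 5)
Your proof is correct and follows essentially the same route as the paper's: apply $\phi^{2}=-I+\eta\otimes\xi$ together with the parallelism $(\nabla\phi)=0$, split $\phi U=\psi U+\omega U$ and $\phi Y=\mathcal{B}Y+\mathcal{C}Y$, use the slant identity $\psi^{2}=-\cos^{2}\theta\,(I-\eta\otimes\xi)$ to recombine a $\cos^{2}\theta$ term with the left-hand side, and sort the remaining terms via the O'Neill formulas (\ref{3}) and (\ref{4}). Your explicit observation that one needs $\sin^{2}\theta\neq 0$ (properness) to pass from the collected identity to the stated equivalence is a point the paper's proof leaves implicit.
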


\begin{proof}
From (\ref{nambla}) and (\ref{TAN}) we have%
\begin{eqnarray}
g_{M}(\nabla _{X}Y,U) &=&-g_{M}(\phi \nabla _{X}\phi Y,U)+g_{M}(\nabla
_{X}Y,\xi )\eta (U)  \label{TOTA} \\
&=&g_{M}(\nabla _{X}\phi Y,\phi U)+g_{M}(\nabla _{X}Y,\xi )\eta (U)  \notag
\\
&=&g_{M}(\phi \nabla _{X}Y,\psi U)+g_{M}(\nabla _{X}\phi Y,\omega
U)+g_{M}(\nabla _{X}Y,\xi )\eta (U).  \notag
\end{eqnarray}%
for any $X,Y\in \Gamma ((\ker F_{\ast })^{\bot })$ and $\ U\in \Gamma (\ker
F_{\ast })$.

Using (\ref{nambla}) and (\ref{TAN}) in (\ref{TOTA}), we obtain 
\begin{eqnarray}
g_{M}(\nabla _{X}Y,U) &=&-g_{M}(\nabla _{X}Y,\psi ^{2}U)-g_{M}(\nabla
_{X}Y,\omega \psi U)  \label{TOT3} \\
&&+g_{M}(\nabla _{X}Y,\xi )\eta (U)+g_{M}(\nabla _{X}\phi Y,\omega U). 
\notag
\end{eqnarray}%
By (\ref{NOR}) and (\ref{SLANT}) we have%
\begin{eqnarray}
g_{M}(\nabla _{X}Y,U) &=&\cos ^{2}\theta g_{M}(\nabla _{X}Y,U)-\cos
^{2}\theta \eta (U)\eta (\nabla _{X}Y)  \label{TOT4} \\
&&-g_{M}(\nabla _{X}Y,\omega \psi U)+g_{M}(\nabla _{X}Y,\xi )\eta (U)  \notag
\\
&&+g_{M}(\nabla _{X}\mathcal{B}Y,\omega U)+g_{M}(\nabla _{X}\mathcal{C}%
Y,\omega U).  \notag
\end{eqnarray}%
Using (\ref{3}), (\ref{4}) and (\ref{xzeta}) in the last equation we obtain%
\begin{equation*}
g_{M}(\mathcal{H}\nabla _{X}Y,\omega \psi U)=g_{M}(\mathcal{A}_{X}\mathcal{B}%
Y,\omega U)+g_{M}(\mathcal{H}\nabla _{X}\mathcal{C}Y,\omega U)
\end{equation*}%
which prove the theorem.
\end{proof}

\begin{theorem}
\label{HTH}Let $F$ be a slant Riemannian submersion from a cosymplectic
manifold $M(\phi ,\xi ,\eta ,g_{M})$ onto a Riemannian manifold $(N,g_{N}).$%
Then the distribution $(\ker F_{\ast })$ defines a totally geodesic
foliation on $M$ if and only if 
\begin{equation*}
g_{M}(\mathcal{H}\nabla _{U}\omega \psi V,X)=g_{M}(\mathcal{T}_{U}\omega V,%
\mathcal{B}X)+g_{M}(\mathcal{H}\nabla _{U}\omega V,\mathcal{C}X)
\end{equation*}%
for any $U,V\in \Gamma (\ker F_{\ast })$ and $X\in \Gamma ((\ker F_{\ast
})^{\bot }).$
\end{theorem}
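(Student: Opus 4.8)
The plan is to reduce the totally geodesic condition to a single scalar identity, exactly as in the proof of Theorem \ref{HAB}, but with the roles of the horizontal and vertical arguments exchanged. The distribution $\ker F_{\ast}$ defines a totally geodesic foliation on $M$ precisely when $\nabla_{U}V\in\Gamma(\ker F_{\ast})$ for all $U,V\in\Gamma(\ker F_{\ast})$, i.e. when $g_{M}(\nabla_{U}V,X)=0$ for every $X\in\Gamma((\ker F_{\ast})^{\bot})$. So the whole task is to compute $g_{M}(\nabla_{U}V,X)$ and rewrite it in terms of $\mathcal{T}$, $\mathcal{H}\nabla$, $\psi$, $\omega$, $\mathcal{B}$ and $\mathcal{C}$.

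First I would move $\phi$ across the metric. Since $X$ is horizontal and $\xi$ is vertical we have $\eta(X)=g_{M}(X,\xi)=0$, so (\ref{metric}) gives $g_{M}(\nabla_{U}V,X)=g_{M}(\phi\nabla_{U}V,\phi X)$, and the cosymplectic identity (\ref{nambla}) lets me pull $\phi$ inside the derivative, $\phi\nabla_{U}V=\nabla_{U}\phi V$. Next I split $\phi V=\psi V+\omega V$ by (\ref{TAN}) and $\phi X=\mathcal{B}X+\mathcal{C}X$ by (\ref{NOR}), and treat the two resulting pieces separately. For the $\omega V$ piece, $\omega V$ is horizontal and $U$ vertical, so (\ref{2}) gives $\nabla_{U}\omega V=\mathcal{H}\nabla_{U}\omega V+\mathcal{T}_{U}\omega V$; pairing the vertical part $\mathcal{T}_{U}\omega V$ against the vertical $\mathcal{B}X$ and the horizontal part $\mathcal{H}\nabla_{U}\omega V$ against the horizontal $\mathcal{C}X$ produces exactly the two terms $g_{M}(\mathcal{T}_{U}\omega V,\mathcal{B}X)+g_{M}(\mathcal{H}\nabla_{U}\omega V,\mathcal{C}X)$ appearing on the right-hand side of the claim.

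For the $\psi V$ piece I would apply the cosymplectic identity a second time: using (\ref{metric}) and (\ref{fi}) (recall $\eta\circ\phi=0$ and $\phi^{2}X=-X$ because $\eta(X)=0$) one gets $g_{M}(\nabla_{U}\psi V,\phi X)=-g_{M}(\nabla_{U}\phi\psi V,X)$, and then $\phi\psi V=\psi^{2}V+\omega\psi V$ together with the slant relation (\ref{SLANT}), $\psi^{2}V=-\cos^{2}\theta\,(V-\eta(V)\xi)$, converts this into $\cos^{2}\theta\,g_{M}(\nabla_{U}V,X)-g_{M}(\mathcal{H}\nabla_{U}\omega\psi V,X)$. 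Collecting the two pieces yields
$$\sin^{2}\theta\,g_{M}(\nabla_{U}V,X)=g_{M}(\mathcal{T}_{U}\omega V,\mathcal{B}X)+g_{M}(\mathcal{H}\nabla_{U}\omega V,\mathcal{C}X)-g_{M}(\mathcal{H}\nabla_{U}\omega\psi V,X),$$
from which the stated equivalence is immediate: $g_{M}(\nabla_{U}V,X)=0$ for all admissible $U,V,X$ if and only if the displayed right-hand side vanishes, which is precisely the asserted formula.

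The only delicate bookkeeping, and the step I expect to cost the most care, is the repeated appearance of the $\xi$- and $\eta$-terms. I would control them using $\nabla_{U}\xi=0$ from (\ref{xzeta}) (so that $\nabla_{U}(\eta(V)\xi)$ contributes only $U(\eta(V))\,\xi$) and the horizontality of $X$ (so every coupling of $\xi$ with $X$ drops out); this is what makes the computation collapse to the clean coefficient $\sin^{2}\theta=1-\cos^{2}\theta$ with no residual $\eta$-terms, in contrast to Theorem \ref{HAB} where the $\eta(U)$ contributions survive. To pass from the displayed identity to the equivalence one divides by $\sin^{2}\theta$, which is legitimate for a proper slant submersion; the invariant case $\theta=0$ is degenerate, since then $\omega=0$ and both sides vanish identically.
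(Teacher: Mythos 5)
Your argument is correct and follows essentially the same route as the paper's own proof: both move $\phi$ across the metric using compatibility and $\nabla\phi=0$, split via $\psi,\omega,\mathcal{B},\mathcal{C}$, invoke the slant relation $\psi^{2}=-\cos^{2}\theta\,(I-\eta\otimes\xi)$ together with $\nabla_{U}\xi=0$, and arrive at the same $\sin^{2}\theta\,g_{M}(\nabla_{U}V,X)$ identity. Your explicit remark that dividing by $\sin^{2}\theta$ requires the submersion to be proper is a worthwhile clarification that the paper leaves implicit.
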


\begin{proof}
From (\ref{nambla}) and (\ref{TAN}) we have%
\begin{eqnarray}
g_{M}(\nabla _{U}V,X) &=&-g_{M}(\phi \nabla _{U}\phi V,X)+g_{M}(\nabla
_{U}V,\xi )\eta (X)  \label{TOTA1} \\
&=&-g_{M}(\phi \nabla _{U}\psi V,X)-g_{M}(\phi \nabla _{U}\omega V,X)  \notag
\\
&=&-g_{M}(\phi \nabla _{U}\psi V,X)+g_{M}(\nabla _{U}\omega V,\phi X). 
\notag
\end{eqnarray}%
for any $U,V\in \Gamma (\ker F_{\ast })$ and $X\in \Gamma ((\ker F_{\ast
})^{\bot }).$

Using (\ref{nambla}), (\ref{TAN}) and (\ref{NOR}) in (\ref{TOTA1}), we
obtain 
\begin{eqnarray}
g_{M}(\nabla _{U}V,X) &=&-g_{M}(\nabla _{U}\psi ^{2}V,X)-g_{M}(\nabla
_{U}\omega \psi V,X)  \label{TOT3,5} \\
&&+g_{M}(\nabla _{U}\omega V,\mathcal{B}X)+g_{M}(\nabla _{U}\omega V,%
\mathcal{C}X).  \notag
\end{eqnarray}%
By (\ref{SLANT}),(\ref{3}), (\ref{4}) and (\ref{xzeta}) in the last equation
we obtain requested relation.
\end{proof}

From Teorem 4 and Teorem 5 we have the following corollary.

\begin{corollary}
\label{CHTH}Let $F$ be a slant Riemannian submersion from a cosymplectic
manifold $M(\phi ,\xi ,\eta ,g_{M})$ onto a Riemannian manifold $(N,g_{N}).$%
Then $M$ is a locally product Riemannian manifold if and only if 
\begin{equation*}
g_{M}(\mathcal{H}\nabla _{U}\omega \psi V,X)=g_{M}(\mathcal{T}_{U}\omega V,%
\mathcal{B}X)+g_{M}(\mathcal{H}\nabla _{U}\omega V,\mathcal{C}X)
\end{equation*}%
and%
\begin{equation*}
g_{M}(\mathcal{H}\nabla _{X}Y,\omega \psi U)=g_{M}(\mathcal{A}_{X}\mathcal{B}%
Y,\omega U)+g_{M}(\mathcal{H}\nabla _{X}\mathcal{C}Y,\omega U)
\end{equation*}%
for any $U,V\in \Gamma (\ker F_{\ast })$ and $X\in \Gamma ((\ker F_{\ast
})^{\bot }).$
\end{corollary}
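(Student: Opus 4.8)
The plan is to recognize this corollary as nothing more than the simultaneous assertion of the two preceding totally geodesic foliation theorems, glued together by the standard characterization of a locally product manifold. Recall that on a Riemannian manifold $M$ carrying two complementary orthogonal distributions whose direct sum is $TM$, the manifold is a locally product Riemannian manifold precisely when each of the two distributions is integrable and its leaves are totally geodesic in $M$, i.e. when both distributions define totally geodesic foliations. In the present setting the two distributions are $\ker F_{\ast}$ and $(\ker F_{\ast})^{\bot}$, and the orthogonal splitting $TM=\ker F_{\ast}\oplus(\ker F_{\ast})^{\bot}$ holds by the very definition of a Riemannian submersion.

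First I would invoke Theorem \ref{HTH}, which asserts that $(\ker F_{\ast})$ defines a totally geodesic foliation on $M$ if and only if the first displayed equation of the corollary holds. Next I would invoke Theorem \ref{HAB}, which gives the analogous equivalence for $(\ker F_{\ast})^{\bot}$ and produces the second displayed equation. Since $M$ is a locally product Riemannian manifold exactly when both foliations are totally geodesic, and since the two theorems translate these two geometric conditions into the two displayed equations respectively, the conjunction of the two equations is equivalent to $M$ being locally product. This is precisely the assertion, so no further slant-specific computation is needed beyond what the two theorems already encode.

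The one point I would flag as the conceptual heart, even though it involves no calculation, is the justification that ``both complementary distributions are totally geodesic foliations'' is genuinely equivalent to ``$M$ is locally product.'' This rests on the (local) de Rham decomposition argument: a distribution $D$ with $TM=D\oplus D^{\bot}$ is parallel under $\nabla$ if and only if both $D$ and $D^{\bot}$ are integrable with totally geodesic leaves, because the $D^{\bot}$-component of $\nabla_{X}Y$ for $X,Y\in\Gamma(D)$ is exactly the second fundamental form of a leaf, and metric compatibility transfers parallelism of $D$ to $D^{\bot}$. Parallelism of such a complementary pair is the classical condition under which $M$ splits locally as a Riemannian product of an integral leaf of $\ker F_{\ast}$ with an integral leaf of $(\ker F_{\ast})^{\bot}$, which is the meaning of ``locally product Riemannian manifold'' intended here. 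Combining this equivalence with Theorems \ref{HAB} and \ref{HTH} completes the argument.
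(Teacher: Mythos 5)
Your proposal is correct and matches the paper's own derivation, which simply cites Theorems \ref{HAB} and \ref{HTH} and the standard fact that $M$ is locally a Riemannian product precisely when both complementary orthogonal distributions define totally geodesic foliations. Your extra paragraph justifying that equivalence via parallelism and the local de Rham decomposition is a welcome elaboration of a step the paper leaves implicit, but it is not a different route.
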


For a slant Riemannian submersion we have the following Theorem.

\begin{theorem}
\label{TG}Let $F$ be a slant Riemannian submersion from a cosymplectic
manifold $M(\phi ,\xi ,\eta ,g_{M})$ onto a Riemannian manifold $(N,g_{N}).$%
Then $F$ is totally geodesic if and only if%
\begin{equation*}
g_{M}(\mathcal{H}\nabla _{U}\omega \psi V,X)=g_{M}(\mathcal{T}_{U}\omega V,%
\mathcal{B}X)+g_{M}(\mathcal{H}\nabla _{U}\omega V,\mathcal{C}X)
\end{equation*}%
and%
\begin{equation*}
g_{M}(\mathcal{H}\nabla _{Z}\omega \psi U,X)=g_{M}(\mathcal{A}_{Z}\omega U,%
\mathcal{B}X)+g_{M}(\mathcal{H}\nabla _{Z}\omega U,\mathcal{C}X)
\end{equation*}%
for $U,V\in \Gamma (\ker F_{\ast })$ and $X,Y\in \Gamma ((\ker F_{\ast
})^{\bot }).$
\end{theorem}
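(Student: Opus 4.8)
The plan is to use the fact that $F$ is totally geodesic exactly when its second fundamental form $(\nabla F_{\ast})$ vanishes on every pair of vector fields, and to split this requirement according to the orthogonal decomposition $TM=\ker F_{\ast}\oplus(\ker F_{\ast})^{\bot}$. By (\ref{5a}) the purely horizontal pairs $(\nabla F_{\ast})(X,Y)$ with $X,Y\in\Gamma((\ker F_{\ast})^{\bot})$ vanish automatically, so only the vertical--vertical and the horizontal--vertical pairs remain; since $(\nabla F_{\ast})$ is symmetric, the pair $(Z,U)$ and the pair $(U,Z)$ give the same condition. Thus the theorem reduces to two independent vanishing conditions, which I expect to be exactly the two displayed equations.

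For the vertical--vertical pairs, I would note that $F_{\ast}U=0$ for $U\in\Gamma(\ker F_{\ast})$, so (\ref{5}) and (\ref{1}) give $(\nabla F_{\ast})(U,V)=-F_{\ast}(\nabla_{U}V)=-F_{\ast}(\mathcal{H}\nabla_{U}V)$; as $F_{\ast}$ is a linear isomorphism on the horizontal space, this vanishes if and only if $g_{M}(\nabla_{U}V,X)=0$ for all $X\in\Gamma((\ker F_{\ast})^{\bot})$. This is precisely the statement that $(\ker F_{\ast})$ defines a totally geodesic foliation, so by Theorem \ref{HTH} it is equivalent to the first displayed identity, and that case may be quoted directly.

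For the mixed case, fix $Z\in\Gamma((\ker F_{\ast})^{\bot})$ and $U\in\Gamma(\ker F_{\ast})$. Again $F_{\ast}U=0$, so (\ref{5}) and (\ref{3}) give $(\nabla F_{\ast})(Z,U)=-F_{\ast}(\nabla_{Z}U)=-F_{\ast}(\mathcal{A}_{Z}U)$, which vanishes if and only if $g_{M}(\nabla_{Z}U,X)=0$ for every $X\in\Gamma((\ker F_{\ast})^{\bot})$. To convert this into the stated equation I would repeat the computation used in the proofs of Theorems \ref{HAB} and \ref{HTH}: starting from $g_{M}(\nabla_{Z}U,X)=-g_{M}(\phi\nabla_{Z}\phi U,X)$, obtained from (\ref{metric}), (\ref{nambla}) and $\eta(X)=0$, I decompose $\phi U=\psi U+\omega U$ by (\ref{TAN}) and $\phi X=\mathcal{B}X+\mathcal{C}X$ by (\ref{NOR}), push $\phi$ through $\nabla$ by the cosymplectic identity (\ref{nambla}), and use $\phi\psi U=\psi^{2}U+\omega\psi U$ together with the slant relation (\ref{SLANT}) and $\nabla\xi=0$ from (\ref{xzeta}). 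Transferring the $\psi^{2}U$-contribution to the left produces the coefficient $\sin^{2}\theta$ multiplying $g_{M}(\nabla_{Z}U,X)$, after which projecting each surviving term by (\ref{3})--(\ref{4}) (so that $\nabla_{Z}\omega\psi U$ contributes only $\mathcal{H}\nabla_{Z}\omega\psi U$, while $\nabla_{Z}\omega U$ contributes $\mathcal{H}\nabla_{Z}\omega U$ against the horizontal $\mathcal{C}X$ and $\mathcal{A}_{Z}\omega U$ against the vertical $\mathcal{B}X$) yields the second displayed identity.

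The main obstacle is the bookkeeping in this last step: one must track which fundamental tensor each term collapses to---$\mathcal{H}\nabla_{Z}(\cdot)$ when paired against a horizontal vector and $\mathcal{A}_{Z}(\cdot)$ when paired against the vertical part $\mathcal{B}X$---and confirm that the $\cos^{2}\theta$ term arising from (\ref{SLANT}) recombines with the leading term into the single coefficient $\sin^{2}\theta$. Provided the submersion is proper, so that $\sin\theta\neq0$, dividing by this factor gives the claimed equivalence; the invariant case $\theta=0$ forces $\omega=0$ and collapses both sides to $0$, and should be recorded separately.
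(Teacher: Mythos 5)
Your proposal is correct and follows essentially the same route as the paper: both reduce the claim via (\ref{5a}) to the vanishing of $(\nabla F_{\ast})$ on vertical--vertical and mixed pairs, expand $\nabla _{U}V=-\phi \nabla _{U}\phi V+\eta (\nabla _{U}V)\xi $ using (\ref{TAN}), (\ref{NOR}) and the slant relation (\ref{SLANT}), and arrive at the two identities of the form $\sin ^{2}\theta \,g_{N}((\nabla F_{\ast })(\cdot ,\cdot ),F_{\ast }X)=\cdots $ (your quoting of Theorem \ref{HTH} for the vertical--vertical case is just a shortcut through the identical computation). Your closing remark that one needs $\sin \theta \neq 0$, i.e.\ a proper submersion, in order to divide --- and that for $\theta =0$ the stated conditions become vacuous --- is a genuine point of care that the paper's proof passes over silently.
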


\begin{proof}
First of all, we recall that Riemannian submersion satisfies 
\begin{equation*}
(\nabla F_{\ast })(X,Y)=0
\end{equation*}%
for $X,Y\in \Gamma ((\ker F_{\ast })^{\bot }).$ We will prove that $(\nabla
F_{\ast })(U,V)=0$ and $(\nabla F_{\ast })(X,U)=0$ for $U,V\in \Gamma (\ker
F_{\ast })$ and $X\in \Gamma ((\ker F_{\ast })^{\bot }).$Since $F$ is a
Riemannian submersion, from (\ref{fi}) and (\ref{nambla}) we get%
\begin{equation}
\nabla _{U}V=-\phi \nabla _{U}\phi V+\eta (\nabla _{U}V)\xi .  \label{irem}
\end{equation}%
From (\ref{TAN}) and (\ref{irem}) we obtain%
\begin{equation*}
g_{N}((\nabla F_{\ast })(U,V),F_{\ast }X)=g_{M}(\nabla _{U}\phi \psi
V,X)-g_{M}(\nabla _{U}\omega V,\phi X).
\end{equation*}%
Using again (\ref{TAN}) and (\ref{NOR}) we get%
\begin{eqnarray*}
g_{N}((\nabla F_{\ast })(U,V),F_{\ast }X) &=&g_{M}(\nabla _{U}\psi
^{2}V,X)+g_{M}(\nabla _{U}\omega \psi V,X) \\
&&-g_{M}(\nabla _{U}\omega V,\mathcal{B}X)-g_{M}(\nabla _{U}\omega V,%
\mathcal{C}X).
\end{eqnarray*}%
Theorem 2, (\ref{1}) and (\ref{2}) imply that%
\begin{eqnarray*}
g_{N}((\nabla F_{\ast })(U,V),F_{\ast }X) &=&-\cos ^{2}\theta g_{M}(\nabla
_{U}V,X)+g_{M}(\nabla _{U}\omega \psi V,X) \\
&&-g_{M}(\mathcal{T}_{U}\omega V,\mathcal{B}X)-g_{M}(\mathcal{H}\nabla
_{U}\omega V,\mathcal{C}X).
\end{eqnarray*}%
After some calculation we have%
\begin{eqnarray}
\sin ^{2}\theta g_{N}((\nabla F_{\ast })(U,V),F_{\ast }X) &=&g_{M}(\nabla
_{U}\omega \psi V,X)-g_{M}(\mathcal{T}_{U}\omega V,\mathcal{B}X)
\label{irem2} \\
&&-g_{M}(\mathcal{H}\nabla _{U}\omega V,\mathcal{C}X).  \notag
\end{eqnarray}%
In a similar way, we get%
\begin{eqnarray}
\sin ^{2}\theta g_{N}((\nabla F_{\ast })(Z,U),F_{\ast }X) &=&g_{M}(\nabla
_{Z}\omega \psi U,X)-g_{M}(\mathcal{A}_{Z}\omega U,\mathcal{B}X)
\label{irem3} \\
&&--g_{M}(\mathcal{H}\nabla _{Z}\omega U,\mathcal{C}X).  \notag
\end{eqnarray}%
Then proof follows from (\ref{irem2}) and (\ref{irem3}).
\end{proof}

Now we establish a sharp inequality between squared mean curvature $\Vert
H\Vert ^{2}$ and the scalar curvature $\hat{\tau}$ of fibre through $p\in
M^{5}(c).$

\begin{theorem}
\label{VERTICAL}Let $F$ be a proper slant Riemannian submersion from a
cosymplectic space form $M^{5}(c)$ onto a Riemannian manifold $(N^{2},g_{N})$%
. Then, we have%
\begin{equation}
\Vert H\Vert ^{2}\geq \frac{8}{9}(\hat{\tau}-\frac{c}{4}(1+3\cos ^{2}\theta
)),  \label{h2inequ}
\end{equation}%
where $H$ denotes mean curvature of fibers. Moreover, the equality sign of (%
\ref{h2inequ}) holds at a point $p$ of a fiber if and only if with respect
to some suitable slant orthonormal basis $\left\{ e_{1},e_{2}=\sec \theta
\psi e_{1},e_{3}=\xi ,e_{4}=\csc \theta we_{1},e_{5}=\csc \theta
we_{2}\right\} $ at $p$, 
\begin{equation*}
T_{11}^{4}=3T_{22}^{4},\text{ \ \ }T_{12}^{4}=0\text{ and }T_{11}^{5}=0.
\end{equation*}%
where $T_{ij}^{\alpha }=g(\mathcal{T}(e_{i},e_{j}),e_{\alpha })$ for $1\leq
i,j\leq 3$ and $\alpha =4,5.$
\end{theorem}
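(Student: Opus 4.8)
The plan is to prove the Chen-type inequality \eqref{h2inequ} by working in the adapted orthonormal basis $\{e_1, e_2=\sec\theta\,\psi e_1, e_3=\xi, e_4=\csc\theta\,\omega e_1, e_5=\csc\theta\,\omega e_2\}$ supplied by Corollary \ref{teta} and Lemma \ref{tetaapisi}, which is valid here since $2m+1=5$ forces $m=2$, $k=1$, and the fiber dimension is $2k+1=3$ with $\mu=\{0\}$. The first three vectors span $\ker F_\ast$ and the last two span $\omega(\ker F_\ast)=(\ker F_\ast)^\bot$. First I would write down the Gauss equation for the fiber, which is exactly \eqref{4D}: for vertical $U,V$ one has $\hat R(U,V,V,U)=R(U,V,V,U)-g(\mathcal{T}_U U,\mathcal{T}_V V)+g(\mathcal{T}_V U,\mathcal{T}_U V)$. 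Summing over the vertical pairs gives the fiber scalar curvature $\hat\tau$ in terms of the ambient scalar curvature along the fiber and the components $T_{ij}^\alpha=g(\mathcal{T}(e_i,e_j),e_\alpha)$.

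**Computing the curvature terms.** Next I would evaluate the ambient curvature contributions $\sum R(e_i,e_j,e_j,e_i)$ over $1\le i<j\le 3$ using the cosymplectic space form formula \eqref{CURVATURE}. The three planes are $e_1\wedge e_2$, $e_1\wedge\xi$, $e_2\wedge\xi$. Since $\nabla\xi=0$ implies all sectional curvatures of $\xi$-planes vanish (the $c$-formula also gives zero for planes containing $\xi$), only the $e_1\wedge e_2$ plane contributes. Here the crucial input is that $e_2=\sec\theta\,\psi e_1$, so $g(\phi e_1,e_2)=\sec\theta\,g(\phi e_1,\psi e_1)=\sec\theta\,g(\psi e_1,\psi e_1)=\cos\theta$ by \eqref{COS5A} applied to the unit vector $e_1\perp\xi$. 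Thus the $-3g(\phi e_1,e_2)^2$ term in $K(e_1\wedge e_2)$ produces the factor $\frac{c}{4}(1+3\cos^2\theta)$ appearing in the inequality. This identifies $\sum R = \frac{c}{2}(1+3\cos^2\theta)$ (up to the normalization absorbed by summing ordered versus unordered pairs), which is the source of the constant on the right-hand side.

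**The algebraic inequality.** After substituting, $\hat\tau$ equals this curvature constant plus a quadratic form in the $T_{ij}^\alpha$. Using \eqref{CONNECTION} ($\mathcal{T}_U\xi=0$) all components with an index equal to $3$ vanish, so only $T_{11}^\alpha, T_{12}^\alpha, T_{22}^\alpha$ for $\alpha=4,5$ survive. The mean curvature is $H=\frac{1}{3}\sum_{j}\mathcal{T}_{e_j}e_j=\frac{1}{3}(\mathcal{T}_{e_1}e_1+\mathcal{T}_{e_2}e_2)$ (the $\xi$ term drops), so $9\|H\|^2=\sum_\alpha(T_{11}^\alpha+T_{22}^\alpha)^2$. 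The heart of the argument — and the step I expect to be the main obstacle — is then to extract the sharp inequality $9\|H\|^2\ge 8(\hat\tau-\text{const})$ from the remaining quadratic terms. This is the standard Chen trick: one writes the difference $9\|H\|^2-8(\hat\tau-\text{const})$ as a sum of squares. Concretely I would complete the square to get an expression like $(T_{11}^4-3T_{22}^4)^2$ together with nonnegative multiples of $(T_{12}^4)^2$ and $(T_{11}^5)^2$ (and the remaining $\alpha=5$ terms), using the symmetry $T_{ij}^\alpha=T_{ji}^\alpha$ from \eqref{TUW}. The coefficient bookkeeping must be arranged so every surviving square has a nonnegative coefficient; this is where care is needed. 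The equality case is then immediate: the sum of squares vanishes precisely when each square is zero, giving $T_{11}^4=3T_{22}^4$, $T_{12}^4=0$, and $T_{11}^5=0$, which is exactly the stated characterization.
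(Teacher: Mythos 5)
Your setup agrees with the paper's: the adapted frame from Corollary \ref{teta} and Lemma \ref{tetaapisi}, the Gauss equation (\ref{4D})--(\ref{4E}), the computation $K(e_{1}\wedge e_{2})=\frac{c}{4}(1+3\cos ^{2}\theta )$ via $g(\phi e_{1},e_{2})=\cos \theta$, the vanishing of the $\xi$-plane contributions, and $9\Vert H\Vert ^{2}=\sum_{\alpha }(T_{11}^{\alpha }+T_{22}^{\alpha })^{2}$. But the step you defer as ``where care is needed'' is exactly where the proposal has a genuine gap: the sum-of-squares decomposition you propose does not exist at the level of generality at which you are working. After the Gauss equation one has
\begin{equation*}
\hat{\tau}-\tfrac{c}{4}(1+3\cos ^{2}\theta )=T_{11}^{4}T_{22}^{4}+T_{11}^{5}T_{22}^{5}-(T_{12}^{4})^{2}-(T_{12}^{5})^{2},
\end{equation*}
and the inequality $\sum_{\alpha }(T_{11}^{\alpha }+T_{22}^{\alpha })^{2}\geq 8\bigl(\sum_{\alpha }T_{11}^{\alpha }T_{22}^{\alpha }-\sum_{\alpha }(T_{12}^{\alpha })^{2}\bigr)$ is \emph{false} for arbitrary symmetric data: take $T_{11}^{4}=3$, $T_{22}^{4}=1$ and all other components zero, which gives $16<24$. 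So no completion of squares with nonnegative coefficients can exist without further input.

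The paper supplies two extra ingredients that you never invoke. First, a structural identity: evaluating $(\nabla _{e_{2}}\psi )e_{2}=\mathcal{B}\mathcal{T}_{e_{2}}e_{2}-\mathcal{T}_{e_{2}}\omega e_{2}$ (equation (\ref{F}), a consequence of the cosymplectic condition $\nabla \phi =0$) against $e_{1}$, and using $\psi e_{2}=-\cos \theta \,e_{1}$ and $\omega e_{2}=\sin \theta \,e_{5}$, yields $\sin \theta \,(T_{22}^{4}-T_{12}^{5})=0$, hence $T_{22}^{4}=T_{12}^{5}$ since the submersion is proper. This converts $-(T_{12}^{5})^{2}$ into the $-(T_{22}^{4})^{2}$ needed to apply $(\mu +\lambda )^{2}\geq 8(\mu \lambda -\lambda ^{2})$ (equivalently $(\mu -3\lambda )^{2}\geq 0$) with $\mu =T_{11}^{4}$, $\lambda =T_{22}^{4}$; in the counterexample above this identity forces $T_{12}^{5}=1$, which restores equality. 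Second, a normalization: one must choose the frame so that $e_{4}$ is parallel to $H(p)$, giving $T_{11}^{5}+T_{22}^{5}=0$ and hence $T_{11}^{5}T_{22}^{5}=-(T_{11}^{5})^{2}\leq 0$. Only after both of these does the elementary inequality close the argument, and the equality conditions $T_{11}^{4}=3T_{22}^{4}$, $T_{12}^{4}=0$, $T_{11}^{5}=0$ drop out. Without identifying these two steps your argument cannot be completed.
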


\begin{proof}
By Corollary \ref{teta} and Lemma \ref{tetaapisi}, we construct a slant
orthonormal basis $\left\{ e_{1},e_{2},e_{3},e_{4},e_{5}\right\} $ defined
by 
\begin{equation}
e_{1},e_{2}=\sec \theta \psi e_{1},e_{3}=\xi ,e_{4}=\csc \theta
we_{1},e_{5}=\csc \theta we_{2}.  \label{SLANT ORT}
\end{equation}%
Let $\hat{\tau}$ be scalar curvature of a fibre $F^{-1}(q)$. We choose
arbitrary point $p$ of the fibre $F^{-1}(q).$We obtain%
\begin{equation}
\hat{\tau}(p)=\hat{K}(e_{1}\wedge e_{2})+\hat{K}(e_{1}\wedge e_{3})+\hat{K}%
(e_{2}\wedge e_{3}).  \label{mico}
\end{equation}%
By (\ref{4D}), (\ref{4E}) and (\ref{CURVATURE}) we get 
\begin{equation}
\hat{K}(e_{1}\wedge e_{2})=\frac{c}{4}(1+3\cos ^{2}\theta
)+T_{11}^{4}T_{22}^{4}+T_{11}^{5}T_{22}^{5}-(T_{12}^{4})^{2}-(T_{12}^{5})^{2},
\label{IREM}
\end{equation}%
where $T_{ij}^{\alpha }=g(\mathcal{T}(e_{i},e_{j}),e_{\alpha })$ for $1\leq
i,j\leq 3$ and $\alpha =4,5.$ Using Theorem \ref{pisi} \ and the relation (%
\ref{COS6}) one has 
\begin{equation}
\psi e_{2}=-\cos \theta e_{1}\text{ \ \ and \ \ }\omega e_{2}=\sin \theta
e_{5}.  \label{OGUZ}
\end{equation}%
From (\ref{F}) we have%
\begin{equation*}
g((\hat{\nabla}_{e_{2}}\psi )e_{2},e_{1})=g(\mathcal{BT}%
_{e_{2}}e_{2},e_{1})-g(\mathcal{T}_{e_{2}}\omega e_{2},e_{1}).
\end{equation*}%
Using (\ref{OGUZ}) in the last relation we obtain 
\begin{equation}
\sin \theta \lbrack g(\mathcal{T}_{e_{2}}e_{2},e_{4})-g(\mathcal{T}%
_{e_{2}}e_{1},e_{5})]=0.  \label{bilal}
\end{equation}%
Since our submersion is proper, the equation (\ref{bilal}) implies%
\begin{equation*}
T_{22}^{4}=T_{12}^{5}.
\end{equation*}%
Now we chose the unit normal vector $e_{4}\in \Gamma (\ker (F_{\ast
}))^{\perp }$ parallel to the mean curvature vector $H(p)$ of fibre. Then we
have 
\begin{equation*}
\Vert H(p)\Vert ^{2}=\frac{1}{9}(T_{11}^{4}+T_{22}^{4})^{2},\text{ \ \ }%
T_{11}^{5}+T_{22}^{5}=0.
\end{equation*}%
So the relation (\ref{IREM}) becomes 
\begin{equation}
\hat{K}(e_{1}\wedge e_{2})=\frac{c}{4}(1+3\cos ^{2}\theta
)+T_{11}^{4}T_{22}^{4}-(T_{11}^{5})^{2}-(T_{12}^{4})^{2}-(T_{22}^{4})^{2}.
\label{metehan}
\end{equation}%
From the trivial inequality $(\mu -3\lambda )^{2}\geq 0$ , one has $(\mu
+\lambda )^{2}\geq 8(\lambda \mu -\lambda ^{2}).$ Putting $\mu =T_{11}^{4}$
and $\lambda =T_{22}^{4}$ in the last inequality we find 
\begin{equation}
\Vert H\Vert ^{2}\geq \frac{8}{9}[\hat{K}(e_{1}\wedge e_{2})-\frac{c}{4}%
(1+3\cos ^{2}\theta )].  \label{kadir}
\end{equation}%
Using (\ref{4E}) we get 
\begin{equation*}
\hat{K}(e_{1}\wedge e_{3})=\hat{K}(e_{2}\wedge e_{3})=0.
\end{equation*}%
By (\ref{4E}), (\ref{mico}) and the last relation \ we get required
inequality. Moreover, the equality sign of (\ref{h2inequ}) holds at a point $%
p$ of a fiber if and only if $T_{11}^{4}=3T_{22}^{4},$ $T_{12}^{4}=0$ and $%
T_{11}^{5}=0.$
\end{proof}

\subsection{Slant Riemannian submersions admitting horizontal structure
vector field}

In this subsection, we will investigate the properties of slant Riemannian
submersions from cosymplectic manifolds onto a Riemannian manifold such that
characteristic vector field $\xi $\textit{\ is horizontal}.

By (\ref{fi}),(\ref{metric}) and (\ref{nambla}) we have 
\begin{equation}
\phi ^{2}U=-U\text{ \ , \ }(\nabla _{U}\phi )V=0\text{ and }g(\phi U,\phi
V)=g(U,V),\forall U,V\in \Gamma (\ker F_{\ast }).  \label{HHH}
\end{equation}%
Fom (\ref{2}), (\ref{4b}), (\ref{xzeta}) and (\ref{HHH}) we obtain 
\begin{equation}
i)\mathcal{T}_{U}\xi =0\text{ \ }ii)\eta (\nabla _{U}V)=0\text{ and }iii)%
\mathcal{A}_{X}\xi =0  \label{HH1}
\end{equation}%
and%
\begin{equation}
\nabla _{U}V=-\phi \nabla _{U}\phi V  \label{HH2}
\end{equation}%
for any $U,V\in \Gamma (\ker F_{\ast })$ and $X(\Gamma (\ker F_{\ast
})^{\perp }).$ By the following same steps in (see: \cite{alfonso}, \cite%
{KUPMUR}) we can prove the following characterization theorem:

\begin{theorem}
\label{HOR1}Let $F$ be a Riemannian submersion from a cosymplectic manifold $%
M(\phi ,\xi ,\eta ,g_{M})$ onto a Riemannian manifold $(N,g_{N})$ such that $%
\xi \in (\Gamma (\ker F_{\ast })^{\perp })$. Then, $F$ is a slant Riemannian
submersion if and only if there exist a constant $\lambda \in \lbrack 0,1]$
such that 
\begin{equation}
\psi ^{2}=-\lambda I.  \label{H2}
\end{equation}%
Furthermore, in such case, if $\theta $ is the slant angle of $F$, then $%
\lambda =\cos ^{2}\theta .$
\end{theorem}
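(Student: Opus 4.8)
The plan is to mirror the proof of the vertical analogue, Theorem \ref{pisi}, exploiting the crucial simplification that $\eta(U)=0$ for every $U\in\Gamma(\ker F_\ast)$ now that $\xi$ is horizontal, so that the correction term $\eta\otimes\xi$ drops out. First I would express the slant angle analytically. For a nonzero vertical $U$, the angle $\theta(U)$ between $\phi U$ and $\ker F_\ast$ is the angle between $\phi U$ and its vertical projection $\psi U$, so that
$$\cos\theta(U)=\frac{\Vert\psi U\Vert}{\Vert\phi U\Vert}.$$
Since $\xi$ is horizontal, the relation $g_M(\phi U,\phi V)=g_M(U,V)$ in (\ref{HHH}) gives $\Vert\phi U\Vert=\Vert U\Vert$, hence $\cos\theta(U)=\Vert\psi U\Vert/\Vert U\Vert$.

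Next I would rewrite $\Vert\psi U\Vert^{2}$ in terms of $\psi^{2}$. The skew-symmetry of $\psi$ recorded in (\ref{ANTT}) yields $g_M(\psi U,\psi U)=-g_M(U,\psi^{2}U)$, and therefore
$$\cos^{2}\theta(U)=\frac{-g_M(\psi^{2}U,U)}{g_M(U,U)}.$$
This single identity drives both directions of the equivalence.

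For the forward implication, constancy of $\theta$ turns the display into $g_M(\psi^{2}U,U)=-\cos^{2}\theta\,g_M(U,U)$ for all vertical $U$. Because $\psi$ is skew-symmetric, $Q:=\psi^{2}$ is a self-adjoint endomorphism of $\ker F_\ast$, so polarizing this quadratic identity upgrades it to $g_M(\psi^{2}U,V)=-\cos^{2}\theta\,g_M(U,V)$ for all $U,V\in\Gamma(\ker F_\ast)$, i.e. $\psi^{2}=-\cos^{2}\theta\,I$; setting $\lambda=\cos^{2}\theta\in[0,1]$ proves (\ref{H2}). Conversely, if $\psi^{2}=-\lambda I$ with $\lambda\in[0,1]$, then $\Vert\psi U\Vert^{2}=-g_M(\psi^{2}U,U)=\lambda\Vert U\Vert^{2}$, whence $\cos^{2}\theta(U)=\lambda$ is independent of both $U$ and the point $p$; thus $F$ is slant and $\lambda=\cos^{2}\theta$.

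The only step requiring genuine care, and the mild obstacle, is the self-adjointness/polarization argument: one must first check that $\psi^{2}$ is symmetric (an immediate consequence of the skew-symmetry in (\ref{ANTT})) before promoting the quadratic equality $g_M(\psi^{2}U,U)=-\cos^{2}\theta\,g_M(U,U)$ to the full operator identity $\psi^{2}=-\cos^{2}\theta\,I$. Everything else is exactly the bookkeeping of Theorem \ref{pisi}, streamlined by the vanishing of $\eta$ on the vertical distribution.
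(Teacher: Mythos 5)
Your argument is correct and is exactly the standard Rayleigh-quotient/polarization proof that the paper itself omits, deferring instead to \cite{alfonso} and \cite{KUPMUR} ("By the following same steps in \dots we can prove the following characterization theorem"); those references prove the analogous statement in precisely this way, via $\cos\theta(U)=\Vert\psi U\Vert/\Vert\phi U\Vert$, the identity $\Vert\psi U\Vert^{2}=-g_M(\psi^{2}U,U)$, and self-adjointness of $\psi^{2}$. No discrepancy to report.
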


By Thorem \ref{HOR1}, we get following

\begin{lemma}
\label{HOR2} Let $F$ be a Riemannian submersion from a cosymplectic manifold 
$M(\phi ,\xi ,\eta ,g_{M})$ onto a Riemannian manifold $(N,g_{N})$ with
slant angle $\theta .$ Then we have the following relations: 
\begin{equation}
g_{M}(\psi U,\psi V)=\cos ^{2}\theta g_{M}(U,V),  \label{H3}
\end{equation}%
\begin{equation}
g_{M}(\omega U,\omega V)=\sin ^{2}\theta g_{M}(U,V)  \label{H4}
\end{equation}%
for any $U,V\in \Gamma (\ker F_{\ast }).$
\end{lemma}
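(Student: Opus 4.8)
The plan is to mirror the computation behind Lemma \ref{pisi2}, but exploiting the simplification that occurs when $\xi$ is horizontal. Since $U,V\in\Gamma(\ker F_{\ast})$ are vertical and $\xi\in\Gamma((\ker F_{\ast})^{\perp})$, we have $\eta(U)=g_{M}(U,\xi)=0$ and likewise $\eta(V)=0$; this is exactly why the correction terms $\eta(U)\eta(V)$ present in (\ref{COS5A}) and (\ref{COS6}) disappear here. So the two identities reduce to the clean forms stated, and the proof should be a short two-step calculation resting on the characterization $\psi^{2}=-\cos^{2}\theta\,I$ from Theorem \ref{HOR1}.

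First I would establish the $\psi$-relation (\ref{H3}). Recalling that $\phi$ is skew-symmetric for $g_{M}$ and that $\omega U$ is horizontal while $V$ is vertical, one obtains the skew-symmetry $g_{M}(\psi U,V)=-g_{M}(U,\psi V)$ exactly as in (\ref{ANTT}). Applying this with $V$ replaced by $\psi V$ gives $g_{M}(\psi U,\psi V)=-g_{M}(U,\psi^{2}V)$. Substituting $\psi^{2}=-\cos^{2}\theta\,I$ from (\ref{H2}) of Theorem \ref{HOR1} (using $\lambda=\cos^{2}\theta$) then yields $g_{M}(\psi U,\psi V)=\cos^{2}\theta\,g_{M}(U,V)$, which is (\ref{H3}).

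Next I would derive the $\omega$-relation (\ref{H4}). Since $\xi$ is horizontal, (\ref{HHH}) gives $g_{M}(\phi U,\phi V)=g_{M}(U,V)$ for vertical $U,V$. Decomposing $\phi U=\psi U+\omega U$ and $\phi V=\psi V+\omega V$ as in (\ref{TAN}), and noting that $\psi U,\psi V$ are vertical while $\omega U,\omega V$ are horizontal, the mixed terms vanish by orthogonality of $\ker F_{\ast}$ and $(\ker F_{\ast})^{\perp}$, so $g_{M}(\phi U,\phi V)=g_{M}(\psi U,\psi V)+g_{M}(\omega U,\omega V)$. Subtracting the already-proven relation (\ref{H3}) gives $g_{M}(\omega U,\omega V)=g_{M}(U,V)-\cos^{2}\theta\,g_{M}(U,V)=\sin^{2}\theta\,g_{M}(U,V)$, which is (\ref{H4}).

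There is no genuine obstacle here: the argument is pure bookkeeping of the vertical/horizontal splitting together with the two supplied identities $\psi^{2}=-\cos^{2}\theta\,I$ and $g_{M}(\phi U,\phi V)=g_{M}(U,V)$. The only point requiring a moment's care is confirming that $\eta(U)=\eta(V)=0$, so that no residual $\eta\otimes\eta$ terms survive; this is immediate from the horizontality of $\xi$ and is precisely the feature distinguishing this lemma from its vertical counterpart (\ref{COS5A})--(\ref{COS6}).
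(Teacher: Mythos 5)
Your argument is correct and is essentially the computation the paper intends: the paper gives no written proof, merely asserting the lemma follows from Theorem \ref{HOR1}, and your two steps (skew-symmetry of $\psi$ plus $\psi^{2}=-\cos^{2}\theta\,I$ for (\ref{H3}), then the orthogonal splitting of $g_{M}(\phi U,\phi V)=g_{M}(U,V)$ for (\ref{H4})) are exactly the details being suppressed, mirroring Lemma \ref{pisi2} with the $\eta$-terms gone because $\xi$ is horizontal. No gaps.
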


\begin{remark}
\label{HOR3A}Let $F$ be a Riemannian submersion from a cosymplectic manifold 
$M(\phi ,\xi ,\eta ,g_{M})$ onto a Riemannian manifold $(N,g_{N})$ with
slant angle $\theta $ and $\xi \in (\Gamma (\ker F_{\ast })^{\perp }).$ Then
there is a distribution $\mathcal{D}$\emph{\ }$\subset (\Gamma (\ker F_{\ast
})^{\perp })$ such that $\Gamma (\ker F_{\ast })^{\perp }=\omega (\ker
F_{\ast })\oplus \mathcal{D}\oplus \{\xi \}.$
\end{remark}

In the similar argumentation of\ Lemma \ref{ir}, we get

\begin{lemma}
\label{HOR3B}Let $F$ be a Riemannian submersion from a cosymplectic manifold 
$M(\phi ,\xi ,\eta ,g_{M})$ onto a Riemannian manifold $(N,g_{N})$ with
slant angle $\theta $ and $\xi \in (\Gamma (\ker F_{\ast })^{\perp }).$ Then
the distribution $\mathcal{D}$ is invariant under $\phi .$
\end{lemma}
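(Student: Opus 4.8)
The plan is to mimic the proof of Lemma \ref{ir}, working with the orthogonal decomposition $\Gamma((\ker F_{\ast })^{\perp }) = \omega (\ker F_{\ast }) \oplus \mathcal{D} \oplus \{\xi\}$ supplied by Remark \ref{HOR3A}. To prove that $\mathcal{D}$ is $\phi$-invariant it suffices to fix an arbitrary $X \in \Gamma(\mathcal{D})$ and verify that $\phi X$ is orthogonal to each of the three complementary pieces of $TM$, namely $\ker F_{\ast }$, $\omega(\ker F_{\ast })$ and the line spanned by $\xi$; together these three orthogonalities force $\phi X \in \Gamma(\mathcal{D})$.

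First I would dispose of the two easy directions. Orthogonality to $\xi$ is immediate from $\eta \circ \phi = 0$ in (\ref{fi}), since $g_{M}(\phi X, \xi) = \eta(\phi X) = 0$. For orthogonality to $\ker F_{\ast }$, take $U \in \Gamma(\ker F_{\ast })$ and use the skew-symmetry of $\phi$ together with (\ref{TAN}) to write $g_{M}(\phi X, U) = -g_{M}(X, \phi U) = -g_{M}(X, \psi U) - g_{M}(X, \omega U)$; both terms vanish because $X \in \mathcal{D}$ is orthogonal to $\ker F_{\ast }$ (which contains $\psi U$) and to $\omega(\ker F_{\ast })$ (which contains $\omega U$). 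In particular this already shows $\phi X \in \Gamma((\ker F_{\ast })^{\perp })$.

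The crux is orthogonality to $\omega(\ker F_{\ast })$, and this is where the slant hypothesis enters. For $U \in \Gamma(\ker F_{\ast })$ I would write $\omega U = \phi U - \psi U$ and split $g_{M}(\phi X, \omega U) = g_{M}(\phi X, \phi U) - g_{M}(\phi X, \psi U)$. The first term equals $g_{M}(X, U) - \eta(X)\eta(U)$ by the compatibility relation (\ref{metric}), and it vanishes since $X$ is horizontal (so $g_{M}(X, U) = 0$) while $\eta(U) = g_{M}(U,\xi) = 0$. For the second term I would use $g_{M}(\phi X, \psi U) = -g_{M}(X, \phi\psi U)$ and expand $\phi\psi U = \psi^{2} U + \omega\psi U$; invoking Theorem \ref{HOR1} in the form $\psi^{2} = -\cos^{2}\theta\, I$ turns this into $-\cos^{2}\theta\, g_{M}(X, U) + g_{M}(X, \omega\psi U)$, both of which vanish because $X$ is orthogonal to $\ker F_{\ast }$ and to $\omega(\ker F_{\ast })$.

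The main (indeed only) obstacle is the bookkeeping of these three orthogonality conditions and recognizing that the slant identity $\psi^{2} = -\cos^{2}\theta\, I$ is exactly what is needed to annihilate the $\psi U$-component; everything else follows directly from the almost-contact compatibility relations. Since $\phi X$ has been shown orthogonal to $\ker F_{\ast }$ it is horizontal, and being additionally orthogonal to both $\omega(\ker F_{\ast })$ and $\xi$ it must lie in $\mathcal{D}$, which completes the argument.
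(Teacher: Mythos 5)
Your proposal is correct and follows essentially the same route as the paper, which proves this lemma by "similar argumentation" to Lemma \ref{ir}: expand $g_{M}(\phi X,\omega U)=g_{M}(\phi X,\phi U)-g_{M}(\phi X,\psi U)$, kill the first term with (\ref{metric}) and the second with the slant identity $\psi ^{2}=-\cos ^{2}\theta\, I$ from Theorem \ref{HOR1}, and separately check orthogonality to $\ker F_{\ast }$ via skew-symmetry of $\phi$. Your explicit additional check that $\phi X\perp \xi$ (from $\eta \circ \phi =0$) is exactly the one extra step the horizontal decomposition of Remark \ref{HOR3A} requires, so the argument is complete.
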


Using same arguments in proof of Theorem \ref{QVER} and \ref{VERQ} we find

\begin{theorem}
\label{HOR3}Let $F$ be a slant Riemannian submersion from a cosymplectic
manifold $M(\phi ,\xi ,\eta ,g_{M})$ onto a Riemannian manifold $(N,g_{N})$
with slant angle $\theta $.\ Then $\nabla Q=0.$
\end{theorem}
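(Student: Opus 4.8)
The plan is to follow the proof of Theorem~\ref{QVER} almost verbatim, the one simplification being that in the present setting $\xi\in\Gamma((\ker F_{\ast})^{\perp})$, so the characterization of slantness is (\ref{H2}) rather than (\ref{SLANT}) and the correction term $\eta\otimes\xi$ never appears. Recall that $Q=\psi^{2}$ and that $\nabla Q$ is defined by (\ref{VERQ}). Everything reduces to a direct algebraic computation once the slant constant $\lambda=\cos^{2}\theta$ is known to be a genuine constant, which is exactly what Theorem~\ref{HOR1} provides.

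First I would fix $U,V\in\Gamma(\ker F_{\ast})$ and note that $\hat{\nabla}_{U}V=\mathcal{V}\nabla_{U}V$ is again vertical. Applying (\ref{H2}) to this vertical vector gives
\[
Q\hat{\nabla}_{U}V=\psi^{2}\hat{\nabla}_{U}V=-\cos^{2}\theta\,\hat{\nabla}_{U}V.
\]
Next, since $QV=\psi^{2}V=-\cos^{2}\theta\,V$ is vertical and $\cos^{2}\theta$ is constant by Theorem~\ref{HOR1}, I would compute
\[
\mathcal{V}\nabla_{U}QV=-\cos^{2}\theta\,\mathcal{V}\nabla_{U}V=-\cos^{2}\theta\,\hat{\nabla}_{U}V .
\]

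Substituting these two expressions into the definition (\ref{VERQ}) then yields
\[
(\nabla_{U}Q)V=\mathcal{V}\nabla_{U}QV-Q\hat{\nabla}_{U}V=0
\]
for all $U,V\in\Gamma(\ker F_{\ast})$, which is the claim $\nabla Q=0$.

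The only place where the hypothesis is actually used—and hence the only conceivable obstacle—is the constancy of $\lambda=\cos^{2}\theta$; this is supplied by Theorem~\ref{HOR1}, after which the argument is purely formal. In fact the computation here is shorter than that of Theorem~\ref{QVER}, precisely because $\xi$ is horizontal and no projection of $\hat{\nabla}_{U}V$ onto $\xi$ intervenes, so that $Q$ acts on the vertical distribution simply as the scalar $-\cos^{2}\theta$.
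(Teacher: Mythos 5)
Your argument is correct and is essentially the paper's own proof: the paper explicitly derives Theorem~\ref{HOR3} by the same computation as Theorem~\ref{QVER}, using (\ref{H2}) in place of (\ref{SLANT}) so that the $\eta\otimes\xi$ correction disappears, exactly as you observe. Your identification of the constancy of $\cos^{2}\theta$ (Theorem~\ref{HOR1}) as the only substantive input is also the right diagnosis.
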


By virtue of (\ref{H4}), we can state

\begin{corollary}
\label{HOR3C}Let $F$ be a Riemannian submersion from a cosymplectic manifold 
$M(\phi ,\xi ,\eta ,g_{M})$ onto a Riemannian manifold $(N,g_{N})$ with
slant angle $\theta $ and $\xi \in (\Gamma (\ker F_{\ast })^{\perp }).$ If $%
\left\{ e_{1},e_{2},...e_{2m-n+1}\right\} $ be a local orthonormal basis of $%
(\ker F_{\ast }),$ then $\left\{ \csc \theta we_{1},\csc \theta
we_{2},...,\csc \theta we_{2m-n+1}\right\} $ is a local orthonormal basis of 
$\omega (\ker F_{\ast }).$\ Moreover $dim(\mathcal{D)=}2(n-m-1).$
\end{corollary}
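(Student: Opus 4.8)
The plan is to mimic the proof of Corollary \ref{teta}, the only structural difference being that in the horizontal case the characteristic vector field $\xi$ no longer lies in $\ker F_{\ast}$, so every element of the given basis is genuinely ``slant'' and contributes to $\omega(\ker F_{\ast})$. First I would verify orthonormality of the proposed frame. Applying the relation (\ref{H4}) from Lemma \ref{HOR2}, for any $1\le i,j\le 2m-n+1$ one computes
\begin{equation*}
g_M(\csc\theta\,\omega e_i,\csc\theta\,\omega e_j)=\csc^2\theta\,g_M(\omega e_i,\omega e_j)=\csc^2\theta\,\sin^2\theta\,g_M(e_i,e_j)=\delta_{ij},
\end{equation*}
so the vectors $\csc\theta\,\omega e_i$ form an orthonormal system. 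Here $\theta\neq 0$ is implicit in the use of $\csc\theta$, so $F$ is non-invariant and, by (\ref{H4}), $\omega$ is injective on $\ker F_{\ast}$.

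Next I would argue that this orthonormal system is in fact a basis of $\omega(\ker F_{\ast})$. Since $\{e_1,\dots,e_{2m-n+1}\}$ spans $\ker F_{\ast}$ and $\omega$ is linear, the images $\omega e_1,\dots,\omega e_{2m-n+1}$ span $\omega(\ker F_{\ast})$ by definition; being orthonormal after rescaling by $\csc\theta$, they are linearly independent, hence a basis. In particular $\dim\omega(\ker F_{\ast})=\dim\ker F_{\ast}=2m-n+1$, which also confirms that $\omega$ restricted to $\ker F_{\ast}$ is an isomorphism onto its image.

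Finally I would obtain the dimension of $\mathcal{D}$ by a dimension count on the orthogonal decomposition of Remark \ref{HOR3A}, namely $\Gamma(\ker F_{\ast})^{\perp}=\omega(\ker F_{\ast})\oplus\mathcal{D}\oplus\{\xi\}$. Since $F$ is a Riemannian submersion from a manifold of dimension $2m+1$, we have $\dim(\ker F_{\ast})^{\perp}=n$ and $\dim\ker F_{\ast}=2m+1-n$. Taking dimensions in the decomposition gives $n=(2m-n+1)+\dim\mathcal{D}+1$, whence $\dim\mathcal{D}=2(n-m-1)$. I do not anticipate any genuine obstacle here: the argument is essentially linear algebra built on (\ref{H4}) together with the decomposition in Remark \ref{HOR3A}. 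The only point requiring a little care is the bookkeeping of dimensions, in particular recording that the fiber dimension is $2m+1-n$ (not $2m-n$, as in the vertical case) and that the one-dimensional summand $\{\xi\}$ must be subtracted, so that the $+1$ in the fiber dimension and the $\{\xi\}$ term combine to yield exactly $\dim\mathcal{D}=2(n-m-1)$.
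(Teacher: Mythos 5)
Your proposal is correct and follows exactly the route the paper intends: the paper gives no written proof, only the remark ``By virtue of (\ref{H4})'', and your argument — orthonormality of $\csc\theta\,\omega e_i$ via (\ref{H4}), spanning of $\omega(\ker F_{\ast})$ by linearity, and the dimension count $n=(2m-n+1)+\dim\mathcal{D}+1$ from the decomposition in Remark \ref{HOR3A} — is precisely the intended justification. The bookkeeping, including the fiber dimension $2m+1-n$ and the subtraction of the $\{\xi\}$ summand, checks out.
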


By using (\ref{H3}), we can prove

\begin{lemma}
\label{HOR4}Let $F$ be a proper slant Riemannian submersion from a
cosymplectic manifold $M^{2m+1}(\phi ,\xi ,\eta ,g_{M})$ onto a Riemannian
manifold $(N^{n},g_{N})$ with slant angle $\theta $ and $\xi \in (\Gamma
(\ker F_{\ast })^{\perp }).$ If $e_{1},e_{2},...e_{k}$ are orthogonal unit
vector fields in $(\ker F_{\ast }),$then%
\begin{equation*}
\left\{ e_{1},\sec \theta \psi e_{1},e_{2},\sec \theta \psi
e_{2},...e_{k},\sec \theta \psi e_{k}\right\}
\end{equation*}%
is a local orthonormal basis of $(\ker F_{\ast }).$ Moreover $\dim (\ker
F_{\ast })=2m-n+1=2k$ and $\dim N=n=2(m-k)+1.$
\end{lemma}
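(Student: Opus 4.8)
The plan is to mirror the argument of Lemma \ref{tetaapisi}, adapting it to the horizontal case in which $\xi \notin \Gamma(\ker F_{\ast})$, so that the structure vector field no longer contributes an extra odd direction. The two ingredients are the relation $\psi^{2}=-\cos^{2}\theta\, I$ coming from Theorem \ref{HOR1} and equation (\ref{H2}), and the compatibility identity (\ref{H3}) of Lemma \ref{HOR2}, together with the skew-symmetry (\ref{ANTT}) of $\psi$.

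First I would check that each $\sec\theta\,\psi e_{i}$ is a unit vector orthogonal to $e_{i}$. Setting $U=V=e_{i}$ in (\ref{H3}) gives $g_{M}(\psi e_{i},\psi e_{i})=\cos^{2}\theta$, and since $F$ is proper we have $\theta\neq\frac{\pi}{2}$, so $\cos\theta\neq0$ and $\Vert\sec\theta\,\psi e_{i}\Vert=1$. The skew-symmetry (\ref{ANTT}) forces $g_{M}(e_{i},\psi e_{i})=0$, so $e_{i}\perp\sec\theta\,\psi e_{i}$. I would then assemble the claimed frame inductively: having chosen mutually orthogonal unit vectors together with their $\psi$-images $\{e_{1},\psi e_{1},\dots,e_{j},\psi e_{j}\}$, select $e_{j+1}$ orthogonal to all of them. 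The point is that $\psi e_{j+1}$ is then automatically orthogonal to this whole set: orthogonality to each $e_{l}$ follows from (\ref{ANTT}) applied to $g_{M}(e_{j+1},\psi e_{l})=0$, while orthogonality to each $\psi e_{l}$ follows from (\ref{H3}) applied to $g_{M}(e_{j+1},e_{l})=0$. This produces an orthonormal system of exactly the stated shape.

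The key step is the dimension count. Because the submersion is proper, $\theta\neq\frac{\pi}{2}$, so $\psi^{2}=-\cos^{2}\theta\, I$ is invertible on $\ker F_{\ast}$; hence $\psi$ has trivial kernel, and $J:=\frac{1}{\cos\theta}\psi$ satisfies $J^{2}=-I$, i.e. $J$ is an almost complex structure on the fibre $\ker F_{\ast}$. Consequently $\dim(\ker F_{\ast})$ is even, say $2k$, and the orthonormal system built above already has $2k$ elements, so it is in fact a basis. This is precisely where the horizontal case diverges from Lemma \ref{tetaapisi}: there $\psi\xi=0$ contributed the leftover odd direction $\xi$, whereas here $\xi$ lies in $(\ker F_{\ast})^{\perp}$ and no such direction survives, yielding an even-dimensional fibre.

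Finally I would read off the dimension relations. Since $F$ is a Riemannian submersion, $\dim(\ker F_{\ast})^{\perp}=\dim N=n$, and $\dim(\ker F_{\ast})+n=\dim M=2m+1$, so $\dim(\ker F_{\ast})=2m-n+1$. Combining with $\dim(\ker F_{\ast})=2k$ gives $2m-n+1=2k$, equivalently $n=2(m-k)+1$, as claimed. I expect the only genuinely delicate point to be the invertibility of $\psi$ and the resulting even-dimensionality of the fibre; everything else is a routine propagation of orthogonality through (\ref{ANTT}) and (\ref{H3}).
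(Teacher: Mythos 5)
Your argument is correct and takes essentially the approach the paper intends: the paper states this lemma with only the remark that it follows from (\ref{H3}), and your filled-in details --- unit length and orthogonality via (\ref{H3}) and the skew-symmetry of $\psi$, plus the observation that properness makes $\frac{1}{\cos \theta }\psi $ an almost complex structure on $\ker F_{\ast }$, forcing the fibre to be even-dimensional --- are the standard completion, mirroring Lemma \ref{tetaapisi} without the extra $\xi $ direction. Nothing to correct.
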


\begin{remark}
Lemma \ref{HOR4} state that dimension of $(\ker F_{\ast })$ and dimension of 
$N$ should be even and odd respectively.
\end{remark}

\begin{lemma}
\label{HOR5}Let $F$ be a proper slant Riemannian submersion from a
cosymplectic manifold $M^{2m+1}(\phi ,\xi ,\eta ,g_{M})$ onto a Riemannian
manifold $(N^{n},g_{N})$ with slant angle $\theta $ and $\xi \in (\Gamma
(\ker F_{\ast })^{\perp }).$ If $\omega $ is parallel then we have%
\begin{equation}
\mathcal{T}_{\psi U}\psi U=-\cos ^{2}\theta \mathcal{T}_{U}U  \label{H5}
\end{equation}
\end{lemma}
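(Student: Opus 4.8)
The plan is to mirror the proof of Lemma \ref{TT} (equation (\ref{SEC1})), since the statement is formally identical and only the ambient structure relations differ (here $\xi$ is horizontal, so we use Theorem \ref{HOR1} in place of Theorem \ref{pisi}). First I would observe that the hypothesis ``$\omega$ is parallel'' means $(\nabla_U \omega)V = 0$ for all $U,V \in \Gamma(\ker F_\ast)$. Applying this to the identity (\ref{W}), namely $(\nabla_U \omega)V = \mathcal{C}\mathcal{T}_U V - \mathcal{T}_U \psi V$, I obtain the relation $\mathcal{C}\mathcal{T}_U V = \mathcal{T}_U \psi V$. This is the structural consequence of parallelism that drives everything.

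Next I would symmetrize. Interchanging $U$ and $V$ in $\mathcal{C}\mathcal{T}_U V = \mathcal{T}_U \psi V$ and using the symmetry $\mathcal{T}_U V = \mathcal{T}_V U$ from (\ref{TUW}) of Lemma \ref{T}, the left-hand sides agree, forcing
\begin{equation*}
\mathcal{T}_U \psi V = \mathcal{T}_V \psi U
\end{equation*}
for all vertical $U,V$. The final step is a substitution: I would replace $V$ by $\psi U$ in this symmetry relation to get $\mathcal{T}_U \psi^2 U = \mathcal{T}_{\psi U}\psi U$, and then invoke the characterization $\psi^2 = -\cos^2\theta\, I$ from Theorem \ref{HOR1} (equation (\ref{H2}) with $\lambda = \cos^2\theta$). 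Since $\xi$ is horizontal here, the correction term $\eta \otimes \xi$ that appears in the vertical case (\ref{SLANT}) is absent, so $\psi^2 U = -\cos^2\theta\, U$ cleanly, and linearity of $\mathcal{T}$ in the lower slot yields $\mathcal{T}_{\psi U}\psi U = -\cos^2\theta\, \mathcal{T}_U U$, which is (\ref{H5}).

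I do not expect a genuine obstacle, as every ingredient is already available in the excerpt; the only point requiring care is using the horizontal-case identity $\psi^2 = -\cos^2\theta\, I$ from Theorem \ref{HOR1} rather than the vertical-case (\ref{SLANT}), which carries the extra $\eta \otimes \xi$ term. Because $\xi \in \Gamma((\ker F_\ast)^\perp)$ in this subsection, restricting to vertical arguments makes this distinction invisible in the final formula, so the computation in fact reads identically to that of Lemma \ref{TT}. For this reason I would simply remark that the proof is analogous to that of Lemma \ref{TT}, substituting Theorem \ref{HOR1} for Theorem \ref{pisi}.
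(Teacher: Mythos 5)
Your proposal is correct and follows essentially the same route as the paper: the paper's proof of Lemma \ref{HOR5} simply defers to the Kaehlerian argument of \cite{SAHIN1}, which is exactly the computation you spell out (parallelism of $\omega$ applied to (\ref{W}), symmetrization via $\mathcal{T}_U V=\mathcal{T}_V U$, substitution $V=\psi U$, and then $\psi^2=-\cos^2\theta\, I$ from Theorem \ref{HOR1}), mirroring the paper's own proof of Lemma \ref{TT}. Your remark that the $\eta\otimes\xi$ correction is absent here because $\xi$ is horizontal is the right point of care and is handled correctly.
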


\begin{proof}
This proof works like the Kaehlerian case in (\cite{SAHIN1}).
\end{proof}

The following result gives sufficient condition to obtain the proper
harmonic slant submersion.

\begin{theorem}
\label{HOR6}Let $F$ be a slant Riemannian submersion from a cosymplectic
manifold $M(\phi ,\xi ,\eta ,g_{M})$ onto a Riemannian manifold $(N,g_{N})$
with slant angle $\theta $ and $\xi \in (\Gamma (\ker F_{\ast })^{\perp }).$
\ If $\omega $ is parallel then $F$ is a harmonic map.

\begin{proof}
By help of (\ref{5a}) we know that 
\begin{equation}
(\nabla F_{\ast })(X,Y)=0,  \label{H6}
\end{equation}%
for any horizontal vetor fields $X,Y.$ From (\ref{5}),(\ref{6}), (\ref{H6})
and Lemma \ref{HOR4} we $\ $get 
\begin{eqnarray}
\tau (F) &=&-\dsum\limits_{i=1}^{m-(\frac{n+1}{2})}[(\nabla F_{\ast
})(e_{i},e_{i})+(\nabla F_{\ast })(\sec \theta \psi e_{i},\sec \theta \psi
e_{i})]  \notag \\
&=&-\dsum\limits_{i=1}^{m-(\frac{n+1}{2})}[F_{\ast }(\nabla
_{e_{i}}e_{i})+\sec ^{2}\theta F_{\ast }(\nabla _{e_{i}}e_{i})],  \label{H7}
\end{eqnarray}%
where $\left\{ e_{1},\sec \theta \psi e_{1},e_{2},\sec \theta \psi
e_{2},...e_{m-(\frac{n+1}{2})},\sec \theta \psi e_{m-(\frac{n+1}{2}%
)}\right\} $is an orthonormal basis of $(\ker F_{\ast }).$ By applying (\ref%
{1}) in (\ref{H7}), we obtain%
\begin{equation*}
\tau (F)=-\dsum\limits_{i=1}^{m-(\frac{n+1}{2})}F_{\ast }(\mathcal{T}%
_{e_{i}}e_{i}+\sec ^{2}\theta \mathcal{T}_{\psi e_{i}}\psi e_{i}).
\end{equation*}%
Then using Lemma \ref{HOR5} we have 
\begin{equation*}
\tau (F)=-\dsum\limits_{i=1}^{m-(\frac{n+1}{2})}F_{\ast }(\mathcal{T}%
_{e_{i}}e_{i}-\mathcal{T}_{e_{i}}e_{i})=0
\end{equation*}%
which says that $F$ is a harmonic map.
\end{proof}
\end{theorem}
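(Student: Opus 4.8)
The plan is to compute the tension field $\tau(F)=\sum_i (\nabla F_{\ast})(E_i,E_i)$ on a frame adapted to the submersion and to show that the slant structure makes it vanish term by term. First I would choose an orthonormal frame of $M$ split into horizontal and vertical parts. The horizontal vectors contribute nothing: by (\ref{5a}) one has $(\nabla F_{\ast})(X,Y)=0$ for all $X,Y\in\Gamma((\ker F_{\ast})^{\perp})$, and in particular the horizontal frame vectors (including $\xi$, which is horizontal here) drop out of the sum in (\ref{6}). Thus $\tau(F)$ is carried entirely by the vertical distribution.

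For the vertical part I would invoke Lemma \ref{HOR4}, which furnishes the slant orthonormal basis $\{e_1,\sec\theta\,\psi e_1,\dots,e_k,\sec\theta\,\psi e_k\}$ of $\ker F_{\ast}$. For any vertical $V$ we have $F_{\ast}V=0$, so the definition (\ref{5}) gives $(\nabla F_{\ast})(V,V)=-F_{\ast}(\nabla_V V)$; decomposing $\nabla_V V=\mathcal{T}_V V+\hat\nabla_V V$ by (\ref{1}) and using that $\hat\nabla_V V$ is vertical (hence killed by $F_{\ast}$) yields $(\nabla F_{\ast})(V,V)=-F_{\ast}(\mathcal{T}_V V)$. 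Summing over the above basis produces
\begin{equation*}
\tau(F)=-\sum_{i=1}^{k}F_{\ast}\big(\mathcal{T}_{e_i}e_i+\sec^2\theta\,\mathcal{T}_{\psi e_i}\psi e_i\big).
\end{equation*}

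The decisive step, and the one where the hypothesis that $\omega$ is parallel is actually used, is the cancellation inside each summand. Lemma \ref{HOR5} gives precisely $\mathcal{T}_{\psi e_i}\psi e_i=-\cos^2\theta\,\mathcal{T}_{e_i}e_i$, so $\sec^2\theta\,\mathcal{T}_{\psi e_i}\psi e_i=-\mathcal{T}_{e_i}e_i$ and each bracket vanishes identically, giving $\tau(F)=0$ and hence harmonicity. I expect the only delicate point to be the frame bookkeeping: one must be certain that the paired vectors $e_i,\sec\theta\,\psi e_i$ genuinely exhaust an orthonormal basis of $\ker F_{\ast}$ (guaranteed by Lemma \ref{HOR4}, which forces $\dim(\ker F_{\ast})$ to be even) and that no stray $\xi$-term survives in the vertical sum, which is automatic because $\xi$ is horizontal. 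Beyond that the argument is the standard harmonic-map computation, entirely parallel to the Kaehler case in \cite{SAHIN1}.
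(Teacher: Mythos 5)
Your argument is correct and coincides with the paper's own proof: both kill the horizontal contributions via (\ref{5a}), reduce each vertical diagonal term to $-F_{\ast}(\mathcal{T}_V V)$ using (\ref{1}), sum over the slant basis of Lemma \ref{HOR4}, and cancel via Lemma \ref{HOR5}. No substantive difference.
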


Now we give informations about the geometry of the leaves of the
distributions $(\ker F_{\ast })$ and $(\ker F_{\ast })^{\perp }$. By using
the relations (\ref{HH1}), (\ref{HH2}) and Lemma \ref{HOR5} and following
same way for the proof of the slant submersions from almost Hermitian
manifolds (see \cite{SAHIN1}), we have

\begin{theorem}
\label{HOR7}Let $F$ be a slant Riemannian submersion from a cosymplectic
manifold $M(\phi ,\xi ,\eta ,g_{M})$ onto a Riemannian manifold $(N,g_{N}).$%
Then the distribution $(\ker F_{\ast })^{\bot }$ defines a totally geodesic
foliation on $M$ if and only if 
\begin{equation*}
g_{M}(\mathcal{H}\nabla _{X}Y,\omega \psi U)=g_{M}(\mathcal{A}_{X}\mathcal{B}%
Y,\omega U)+g_{M}(\mathcal{H}\nabla _{X}\mathcal{C}Y,\omega U)
\end{equation*}%
for any $X,Y\in \Gamma ((\ker F_{\ast })^{\bot })$ and $U\in \Gamma (\ker
F_{\ast }).$
\end{theorem}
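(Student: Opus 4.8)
The plan is to establish this criterion by the same route used for the vertical case in Theorem \ref{HAB}, whose conclusion is formally identical; the only changes are the structural relations available when $\xi$ is horizontal. Recall that $(\ker F_{\ast })^{\bot }$ defines a totally geodesic foliation on $M$ exactly when $\nabla _{X}Y$ stays horizontal for all $X,Y\in \Gamma ((\ker F_{\ast })^{\bot })$, i.e. when $g_{M}(\nabla _{X}Y,U)=0$ for every $U\in \Gamma (\ker F_{\ast })$ (this single condition also forces integrability, since $g_{M}([X,Y],U)=g_{M}(\nabla _{X}Y-\nabla _{Y}X,U)$). So the whole problem reduces to computing $g_{M}(\nabla _{X}Y,U)$ and extracting a factor $\sin ^{2}\theta $.

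First I would rewrite $g_{M}(\nabla _{X}Y,U)$ via the compatibility identity (\ref{metric}) in the form $g_{M}(A,B)=g_{M}(\phi A,\phi B)+\eta (A)\eta (U)$. The crucial simplification over Theorem \ref{HAB} is that here $\xi $ is horizontal, so $\eta (U)=g_{M}(U,\xi )=0$ for the vertical field $U$; this annihilates every $\eta $-correction at once and yields $g_{M}(\nabla _{X}Y,U)=g_{M}(\phi \nabla _{X}Y,\phi U)$. Next I would invoke the cosymplectic condition (\ref{nambla}) as $\phi \nabla _{X}Y=\nabla _{X}\phi Y$, together with the decompositions (\ref{TAN}) $\phi U=\psi U+\omega U$ and (\ref{NOR}) $\phi Y=\mathcal{B}Y+\mathcal{C}Y$. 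Applying skew-symmetry of $\phi $ to the $\psi U$-term and writing $\phi \psi U=\psi ^{2}U+\omega \psi U$, the horizontal slant characterization (\ref{H2}), namely $\psi ^{2}U=-\cos ^{2}\theta\, U$ with no $\eta \otimes \xi $ term, produces a summand $\cos ^{2}\theta\, g_{M}(\nabla _{X}Y,U)$. Transposing it gives the identity
\[
\sin ^{2}\theta\, g_{M}(\nabla _{X}Y,U)=-g_{M}(\nabla _{X}Y,\omega \psi U)+g_{M}(\nabla _{X}\mathcal{B}Y,\omega U)+g_{M}(\nabla _{X}\mathcal{C}Y,\omega U).
\]

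To finish, I would replace the raw covariant derivatives by their O'Neill components: since $\omega \psi U$ and $\omega U$ are horizontal while $\mathcal{B}Y$ is vertical and $\mathcal{C}Y$ horizontal, the relations (\ref{3}) and (\ref{4}) convert the three right-hand terms into $g_{M}(\mathcal{H}\nabla _{X}Y,\omega \psi U)$, $g_{M}(\mathcal{A}_{X}\mathcal{B}Y,\omega U)$ and $g_{M}(\mathcal{H}\nabla _{X}\mathcal{C}Y,\omega U)$ respectively, and the stated equivalence drops out once $\sin ^{2}\theta \neq 0$. The main obstacle is purely bookkeeping: keeping straight which component of each term lands in $\ker F_{\ast }$ versus $(\ker F_{\ast })^{\bot }$, so that $\mathcal{A}$ and $\mathcal{H}\nabla $ are applied to the correct arguments, and using skew-symmetry of $\phi $ with the right sign. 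The one genuine subtlety is the factor $\sin ^{2}\theta $, which must be nonzero to read off the criterion in the displayed form; this is automatic for a proper slant submersion, so the statement is used exactly as in Theorem \ref{HAB}.
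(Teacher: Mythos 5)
Your argument is correct and is essentially the proof the paper intends: the paper omits the details here, deferring to the vertical-case computation of Theorem \ref{HAB} and the Kaehlerian argument in \cite{SAHIN1}, and your derivation is exactly that computation adapted to the horizontal case (with $\eta(U)=0$ killing the $\xi$-corrections and (\ref{H2}) supplying $\psi^{2}=-\cos^{2}\theta\, I$). The resulting identity $\sin^{2}\theta\, g_{M}(\nabla_{X}Y,U)=-g_{M}(\mathcal{H}\nabla_{X}Y,\omega\psi U)+g_{M}(\mathcal{A}_{X}\mathcal{B}Y,\omega U)+g_{M}(\mathcal{H}\nabla_{X}\mathcal{C}Y,\omega U)$ matches the paper's criterion, with the same (implicit) properness assumption needed to divide by $\sin^{2}\theta$.
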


\begin{theorem}
\label{HOR8}Let $F$ be a slant Riemannian submersion from a cosymplectic
manifold $M(\phi ,\xi ,\eta ,g_{M})$ onto a Riemannian manifold $(N,g_{N}).$%
Then the distribution $(\ker F_{\ast })$ defines a totally geodesic
foliation on $M$ if and only if 
\begin{equation*}
g_{M}(\mathcal{H}\nabla _{U}\omega \psi V,X)=g_{M}(\mathcal{T}_{U}\omega V,%
\mathcal{B}X)+g_{M}(\mathcal{H}\nabla _{U}\omega V,\mathcal{C}X)
\end{equation*}%
for any $U,V\in \Gamma (\ker F_{\ast })$ and $X\in \Gamma ((\ker F_{\ast
})^{\bot }).$
\end{theorem}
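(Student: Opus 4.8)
The plan is to mimic, in the horizontal setting, the computation carried out for the vertical analogue in Theorem \ref{HTH}, replacing the structure relations valid for $\xi$ vertical by their counterparts (\ref{H2}), (\ref{HH1}) and (\ref{HH2}) that hold when $\xi$ is horizontal. Recall first that $(\ker F_\ast)$ defines a totally geodesic foliation on $M$ precisely when $\nabla_U V$ stays vertical for all $U,V\in\Gamma(\ker F_\ast)$, i.e. when $g_M(\nabla_U V,X)=0$ for every $X\in\Gamma((\ker F_\ast)^\perp)$. So the entire task is to re-express $g_M(\nabla_U V,X)$ in terms of the O'Neill tensors and the operators $\psi,\omega,\mathcal{B},\mathcal{C}$, and then read off when it vanishes.

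First I would use (\ref{HH2}), namely $\nabla_U V=-\phi\nabla_U\phi V$, together with the skew-symmetry of $\phi$ coming from (\ref{metric}), to write $g_M(\nabla_U V,X)=g_M(\nabla_U\phi V,\phi X)$. Then I split $\phi V=\psi V+\omega V$ by (\ref{TAN}) and $\phi X=\mathcal{B}X+\mathcal{C}X$ by (\ref{NOR}). Since the manifold is cosymplectic, $(\nabla_U\phi)=0$ lets me pull $\phi$ through the connection, so that $\phi\nabla_U\psi V=\nabla_U(\phi\psi V)=\nabla_U(\psi^2V+\omega\psi V)$. Here the key input is the horizontal-case slant identity (\ref{H2}), $\psi^2=-\cos^2\theta\,I$; note that \emph{no} $\eta\otimes\xi$ correction occurs (in contrast to the vertical case), because $\xi$ is horizontal and hence $\eta(V)=0$ for vertical $V$. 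Substituting this produces a term $\cos^2\theta\,g_M(\nabla_U V,X)$, which I move to the left-hand side.

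After collecting terms I expect to reach $\sin^2\theta\,g_M(\nabla_U V,X)=-g_M(\nabla_U\omega\psi V,X)+g_M(\nabla_U\omega V,\mathcal{B}X)+g_M(\nabla_U\omega V,\mathcal{C}X)$. The final step is to convert these covariant derivatives into O'Neill form via (\ref{2}): for the vertical field $U$ and the horizontal fields $\omega\psi V,\omega V$ one has $\nabla_U(\cdot)=\mathcal{H}\nabla_U(\cdot)+\mathcal{T}_U(\cdot)$, so pairing against the horizontal vectors $X$ and $\mathcal{C}X$ keeps only the $\mathcal{H}\nabla$-parts, while pairing $\nabla_U\omega V$ against the vertical $\mathcal{B}X$ keeps only $\mathcal{T}_U\omega V$. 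This turns the right-hand side into $g_M(\mathcal{T}_U\omega V,\mathcal{B}X)+g_M(\mathcal{H}\nabla_U\omega V,\mathcal{C}X)-g_M(\mathcal{H}\nabla_U\omega\psi V,X)$. Since $(\ker F_\ast)$ is a totally geodesic foliation iff the left-hand side vanishes, the stated equivalence follows.

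The computation has no deep obstacle — it is the same bookkeeping as in Theorem \ref{HTH} — but the point requiring care is the handling of the characteristic direction. In the horizontal case one must use that $\eta(V)=0$ for vertical $V$ (so that the $\eta\otimes\xi$ term in $\psi^2$ is absent, which is exactly why (\ref{H2}), rather than the vertical identity, is invoked) and that $g_M(\nabla_U V,\xi)=\eta(\nabla_U V)=0$ by (\ref{HH1}), which is what makes the $\xi$-components drop out cleanly and leaves no residual terms. Provided these are tracked correctly, projecting onto the vertical and horizontal distributions at the end yields the displayed formula.
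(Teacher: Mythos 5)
Your proposal is correct and follows essentially the same route the paper intends: the paper omits the proof of this theorem, pointing instead to relations (\ref{HH1}), (\ref{HH2}) and the vertical analogue (Theorem \ref{HTH}), and your computation is precisely that argument transplanted to the horizontal-$\xi$ setting, with the correct observation that the $\eta\otimes\xi$ correction disappears. The only implicit point (shared with the paper's own treatment) is that dividing by $\sin^{2}\theta$ requires the submersion not to be invariant, i.e.\ $\theta\neq 0$.
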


From Theorem \ref{HOR7} and Theorem \ref{HOR8} we obtain

\begin{corollary}
\label{HOR9}Let $F$ be a slant Riemannian submersion from a cosymplectic
manifold $M(\phi ,\xi ,\eta ,g_{M})$ onto a Riemannian manifold $(N,g_{N}).$%
Then $M$ is a locally product Riemannian manifold if and only if and%
\begin{equation*}
g_{M}(\mathcal{H}\nabla _{X}Y,\omega \psi U)=g_{M}(\mathcal{A}_{X}\mathcal{B}%
Y,\omega U)+g_{M}(\mathcal{H}\nabla _{X}\mathcal{C}Y,\omega U)
\end{equation*}%
and%
\begin{equation*}
g_{M}(\mathcal{H}\nabla _{U}\omega \psi V,X)=g_{M}(\mathcal{T}_{U}\omega V,%
\mathcal{B}X)+g_{M}(\mathcal{H}\nabla _{U}\omega V,\mathcal{C}X)
\end{equation*}%
for any $U,V\in \Gamma (\ker F_{\ast })$ and $X\in \Gamma ((\ker F_{\ast
})^{\bot }).$
\end{corollary}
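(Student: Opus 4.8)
The plan is to reduce the statement to the two characterizations already established together with the classical decomposition principle for orthogonal complementary foliations. First I would record that, since $F$ is a Riemannian submersion, the tangent bundle splits orthogonally as $TM=(\ker F_{\ast })\oplus (\ker F_{\ast })^{\bot }$, the two summands being smooth complementary distributions of constant rank. The whole argument rests on the standard fact that a Riemannian manifold carrying two such orthogonal, complementary distributions is locally isometric to the Riemannian product of their leaves precisely when each distribution is integrable and its leaves are totally geodesic, that is, when each distribution defines a totally geodesic foliation on $M$.

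Granting this, the proof is a direct assembly of the preceding results. I would invoke Theorem \ref{HOR7}, which states that $(\ker F_{\ast })^{\bot }$ defines a totally geodesic foliation if and only if
\begin{equation*}
g_{M}(\mathcal{H}\nabla _{X}Y,\omega \psi U)=g_{M}(\mathcal{A}_{X}\mathcal{B}Y,\omega U)+g_{M}(\mathcal{H}\nabla _{X}\mathcal{C}Y,\omega U),
\end{equation*}
and Theorem \ref{HOR8}, which gives the corresponding condition
\begin{equation*}
g_{M}(\mathcal{H}\nabla _{U}\omega \psi V,X)=g_{M}(\mathcal{T}_{U}\omega V,\mathcal{B}X)+g_{M}(\mathcal{H}\nabla _{U}\omega V,\mathcal{C}X)
\end{equation*}
for $(\ker F_{\ast })$ to define a totally geodesic foliation, for all $U,V\in \Gamma (\ker F_{\ast })$ and $X,Y\in \Gamma ((\ker F_{\ast })^{\bot })$. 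Since being a locally product Riemannian manifold is equivalent to both complementary distributions being totally geodesic foliations simultaneously, $M$ is locally product if and only if both displayed identities hold, which is exactly the assertion.

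The only genuine content — and hence the step I would isolate and justify (by citing the standard foliation decomposition result) — is the equivalence between local product structure and the simultaneous vanishing of the second fundamental forms of the leaves of the two complementary distributions; everything else is pure bookkeeping with Theorem \ref{HOR7} and Theorem \ref{HOR8}. No new curvature or tensor computation is required, so I anticipate no computational obstacle: the difficulty, such as it is, lies entirely in stating the decomposition principle correctly and recognizing that the two earlier theorems supply precisely its two hypotheses.
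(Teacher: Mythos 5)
Your proposal is correct and matches the paper exactly: the corollary is obtained there simply by combining Theorem \ref{HOR7} and Theorem \ref{HOR8} with the standard fact that a local Riemannian product structure is equivalent to both complementary orthogonal distributions defining totally geodesic foliations. Your write-up merely makes explicit the decomposition principle that the paper leaves implicit.
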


Now we give sufficient and necessary totally geodesic condition for a proper
slant submersion $F$ from cosymplectic manifolds with $\xi \in (\Gamma (\ker
F_{\ast })^{\perp })$.

\begin{theorem}
\label{HOR10}Let $F$ be a slant Riemannian submersion from a cosymplectic
manifold $M(\phi ,\xi ,\eta ,g_{M})$ onto a Riemannian manifold $(N,g_{N})$
with slant angle $\theta $ and $\xi \in (\Gamma (\ker F_{\ast })^{\perp }).$%
Then $F$ is a totally geodesic map if and only if%
\begin{equation*}
g_{M}(\mathcal{H}\nabla _{U}\omega \psi V,X)=g_{M}(\mathcal{T}_{U}\omega V,%
\mathcal{B}X)+g_{M}(\mathcal{H}\nabla _{U}\omega V,\mathcal{C}X)
\end{equation*}%
and%
\begin{equation*}
g_{M}(\mathcal{H}\nabla _{Y}\omega \psi U,X)=g_{M}(\mathcal{A}_{Y}\omega U,%
\mathcal{B}X)+g_{M}(\mathcal{H}\nabla _{Y}\omega U,\mathcal{C}X)
\end{equation*}%
for $U,V\in \Gamma (\ker F_{\ast })$ and $X,Y\in \Gamma ((\ker F_{\ast
})^{\bot }).$
\end{theorem}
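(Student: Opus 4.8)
Theorem \ref{HOR10} concerns the totally geodesic condition for a slant submersion with horizontal characteristic vector field. Let me sketch how I would prove it.

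The plan is to follow the pattern of Theorem \ref{TG} verbatim, adapting each step to the case $\xi \in \Gamma((\ker F_{\ast})^{\perp})$, where the relations simplify because $\eta$ annihilates every vertical vector. Recall that a smooth map is totally geodesic precisely when its second fundamental form $\nabla F_{\ast}$ vanishes on all pairs of vector fields. Since $F$ is a Riemannian submersion, (\ref{5a}) already gives $(\nabla F_{\ast})(X,Y)=0$ for $X,Y\in\Gamma((\ker F_{\ast})^{\perp})$, so it suffices to characterize the vanishing of $(\nabla F_{\ast})(U,V)$ and of $(\nabla F_{\ast})(Y,U)$ for $U,V\in\Gamma(\ker F_{\ast})$ and $Y\in\Gamma((\ker F_{\ast})^{\perp})$. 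In both cases I would test against $F_{\ast}X$, $X\in\Gamma((\ker F_{\ast})^{\perp})$, which detects vanishing because $F_{\ast}$ is a fibrewise isomorphism from the horizontal distribution onto $TN$.

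For the vertical–vertical term, the key computation starts from (\ref{HH2}), $\nabla_{U}V=-\phi\nabla_{U}\phi V$, where (crucially) the $\eta(\nabla_{U}V)\xi$ correction present in the vertical case (\ref{irem}) is now absent, since $\eta$ vanishes on vertical fields. Using $(\nabla F_{\ast})(U,V)=-F_{\ast}(\nabla_{U}V)$, the skew-symmetry of $\phi$ with respect to $g_{M}$, the parallelism (\ref{nambla}) of $\phi$, and the decompositions (\ref{TAN}) and (\ref{NOR}), I would rewrite $g_{N}((\nabla F_{\ast})(U,V),F_{\ast}X)$ in terms of $\psi^{2}V$, $\omega\psi V$, $\mathcal{B}X$ and $\mathcal{C}X$. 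At this stage I invoke the horizontal slant identity (\ref{H2}), $\psi^{2}=-\cos^{2}\theta\,I$, to replace $\nabla_{U}\psi^{2}V$ by $-\cos^{2}\theta\,\nabla_{U}V$; the O'Neill formula (\ref{2}) then splits $\nabla_{U}\omega V$ into $\mathcal{T}_{U}\omega V+\mathcal{H}\nabla_{U}\omega V$. Collecting the $\nabla_{U}V$ terms on one side yields
\begin{equation*}
\sin^{2}\theta\,g_{N}((\nabla F_{\ast})(U,V),F_{\ast}X)=g_{M}(\mathcal{H}\nabla_{U}\omega\psi V,X)-g_{M}(\mathcal{T}_{U}\omega V,\mathcal{B}X)-g_{M}(\mathcal{H}\nabla_{U}\omega V,\mathcal{C}X).
\end{equation*}
Since $\sin^{2}\theta\neq 0$ for a proper slant submersion, $(\nabla F_{\ast})(U,V)$ vanishes for all $U,V,X$ exactly when the first displayed condition of the theorem holds.

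The mixed term is handled analogously. Writing $\nabla_{Y}U=-\phi\nabla_{Y}\phi U$ (obtained from $U=-\phi^{2}U$, valid for vertical $U$ by (\ref{HHH}), together with the parallelism of $\phi$), and repeating the same manipulations — now using the O'Neill formula (\ref{4}) to split $\nabla_{Y}\omega U$ into $\mathcal{A}_{Y}\omega U+\mathcal{H}\nabla_{Y}\omega U$, since the differentiation is in a horizontal direction — I would arrive at
\begin{equation*}
\sin^{2}\theta\,g_{N}((\nabla F_{\ast})(Y,U),F_{\ast}X)=g_{M}(\mathcal{H}\nabla_{Y}\omega\psi U,X)-g_{M}(\mathcal{A}_{Y}\omega U,\mathcal{B}X)-g_{M}(\mathcal{H}\nabla_{Y}\omega U,\mathcal{C}X).
\end{equation*}
Cancelling the nonzero factor $\sin^{2}\theta$ gives the second stated condition, and combining the two equivalences completes the proof.

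I expect the only real care to be bookkeeping rather than any genuine obstacle: one must consistently use the horizontal-case simplifications — namely $\eta|_{\ker F_{\ast}}=0$ and the clean form (\ref{H2}) of the slant identity in place of (\ref{SLANT}) — and correctly route each inner product through the appropriate integrability tensor ($\mathcal{T}$ for differentiation along a vertical direction, $\mathcal{A}$ for a horizontal one). The structural argument is identical to that of Theorem \ref{TG}, so no new geometric difficulty arises.
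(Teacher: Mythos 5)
Your proposal is correct and follows essentially the same route as the paper's proof: reduce to the vertical--vertical and mixed terms via (\ref{5a}), use $\nabla_{U}V=-\phi\nabla_{U}\phi V$ together with (\ref{TAN}), (\ref{NOR}), the horizontal slant identity $\psi^{2}=-\cos^{2}\theta\,I$, and the O'Neill splittings (\ref{2}) and (\ref{4}) to arrive at exactly the paper's equations (\ref{H9}) and (\ref{H10}). Your explicit remark that one must divide by $\sin^{2}\theta\neq 0$ (i.e.\ that the submersion should be proper, as the paper's surrounding text assumes) is in fact slightly more careful than the paper's own write-up.
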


\begin{proof}
Using (\ref{5a}) we have 
\begin{equation*}
(\nabla F_{\ast })(X,Y)=0,
\end{equation*}%
for $X,Y\in \Gamma ((\ker F_{\ast })^{\bot }).$Thus it is enough to prove
that $(\nabla F_{\ast })(U,V)=0$ and $(\nabla F_{\ast })(X,U)=0$ for $U,V\in
\Gamma (\ker F_{\ast })$ and $X\in \Gamma ((\ker F_{\ast })^{\bot }).$From (%
\ref{TAN}) and (\ref{HH1}) we obtain%
\begin{equation*}
g_{N}((\nabla F_{\ast })(U,V),F_{\ast }X)=g_{M}(\nabla _{U}\phi \psi
V,X)-g_{M}(\nabla _{U}\omega V,\phi X).
\end{equation*}%
Using again (\ref{TAN}) and (\ref{NOR}) we get%
\begin{eqnarray*}
g_{N}((\nabla F_{\ast })(U,V),F_{\ast }X) &=&g_{M}(\nabla _{U}\psi
^{2}V,X)+g_{M}(\nabla _{U}\omega \psi V,X) \\
&&-g_{M}(\nabla _{U}\omega V,\mathcal{B}X)-g_{M}(\nabla _{U}\omega V,%
\mathcal{C}X).
\end{eqnarray*}%
By Theorem \ref{HOR1}, (\ref{1}), (\ref{2}) and (\ref{HH1}) we get%
\begin{eqnarray*}
g_{N}((\nabla F_{\ast })(U,V),F_{\ast }X) &=&-\cos ^{2}\theta g_{M}(\nabla
_{U}V,X)+g_{M}(\nabla _{U}\omega \psi V,X) \\
&&-g_{M}(\mathcal{T}_{U}\omega V,\mathcal{B}X)-g_{M}(\mathcal{H}\nabla
_{U}\omega V,\mathcal{C}X).
\end{eqnarray*}%
By a simple calculation we have%
\begin{eqnarray}
\sin ^{2}\theta g_{N}((\nabla F_{\ast })(U,V),F_{\ast }X) &=&g_{M}(\nabla
_{U}\omega \psi V,X)-g_{M}(\mathcal{T}_{U}\omega V,\mathcal{B}X)  \label{H9}
\\
&&-g_{M}(\mathcal{H}\nabla _{U}\omega V,\mathcal{C}X).  \notag
\end{eqnarray}%
In a similar way, we get%
\begin{eqnarray}
\sin ^{2}\theta g_{N}((\nabla F_{\ast })(Y,U),F_{\ast }X) &=&g_{M}(\nabla
_{Y}\omega \psi U,X)-g_{M}(\mathcal{A}_{Y}\omega U,\mathcal{B}X)  \label{H10}
\\
&&-g_{M}(\mathcal{H}\nabla _{Y}\omega U,\mathcal{C}X).  \notag
\end{eqnarray}%
Combining (\ref{H9}) and (\ref{H10}) we get requested equations.
\end{proof}

Now we give a sharp inequality between squared mean curvature $\Vert H\Vert
^{2}$ and the scalar curvature $\hat{\tau}$ of fibre through $p\in M^{5}(c)$
such that characteristic vector field $\xi $\textit{\ is horizontal}.

\begin{theorem}
\label{HOR11}Let $F$ be a proper slant Riemannian submersion from a
cosymplectic space form $M^{5}(c)$ onto a Riemannian manifold $(N^{3},g_{N})$%
. Then, we have%
\begin{equation}
\Vert H\Vert ^{2}\geq \frac{1}{4}(\hat{\tau}-\frac{c}{4}(1+3\cos ^{2}\theta
))  \label{horinequ}
\end{equation}%
Moreover, the equality sign of (\ref{horinequ}) holds at a point $p$ of a
fiber if and only if with respect to some suitable slant orthonormal basis $%
\left\{ e_{1},e_{2}=\sec \theta \psi e_{1},e_{3}=\csc \theta
we_{1},e_{4}=\csc \theta we_{2},e_{5}=\xi \right\} $ at $p$, 
\begin{equation*}
T_{11}^{4}=-T_{22}^{4},\text{ and }%
T_{11}^{3}=T_{12}^{3}=T_{22}^{3}=0=T_{ij}^{5}.
\end{equation*}%
where $T_{ij}^{\alpha }=g(\mathcal{T}(e_{i},e_{j}),e_{\alpha })$ for $1\leq
i,j\leq 2$ and $3\leq \alpha \leq 5.$ Here $H$ is mean curvature of fiber.
\end{theorem}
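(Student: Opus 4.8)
The plan is to follow the template of the proof of Theorem \ref{VERTICAL}, adapted to the horizontal case where the fibre is now $2$-dimensional. First I would use Lemma \ref{HOR4} and Corollary \ref{HOR3C} to justify that $\{e_1,e_2=\sec\theta\,\psi e_1\}$ is an orthonormal frame of $\ker F_\ast$ and that $\{e_3=\csc\theta\,\omega e_1,\,e_4=\csc\theta\,\omega e_2,\,e_5=\xi\}$ is an orthonormal frame of $(\ker F_\ast)^\perp$; here the hypotheses $\dim M=5$, $\dim N=3$ force $\dim(\ker F_\ast)=2$, so the fibre scalar curvature collapses to the single term $\hat\tau(p)=\hat K(e_1\wedge e_2)$. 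This replaces the three-term sum (\ref{mico}) of the vertical case and is the structural reason the constant in (\ref{horinequ}) differs from that in (\ref{h2inequ}).

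Next I would compute the ambient sectional curvature $K(e_1\wedge e_2)$ from the cosymplectic space form tensor (\ref{CURVATURE}). Using Theorem \ref{HOR1} in the form $\psi e_1=\cos\theta\,e_2$ and $\psi e_2=-\cos\theta\,e_1$, together with $\eta(e_1)=\eta(e_2)=0$ (since $\xi$ is horizontal), all $\eta$-terms drop out while $g(\phi e_1,e_2)=\cos\theta$ and $g(\phi e_2,e_1)=-\cos\theta$ survive, giving $K(e_1\wedge e_2)=\frac{c}{4}(1+3\cos^2\theta)$. Inserting this into O'Neill's equation (\ref{4E}) yields
\[
\hat\tau(p)=\frac{c}{4}(1+3\cos^2\theta)+\sum_{\alpha=3}^{5}\bigl(T_{11}^\alpha T_{22}^\alpha-(T_{12}^\alpha)^2\bigr),
\]
where $T_{ij}^\alpha=g(\mathcal{T}(e_i,e_j),e_\alpha)$. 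I would then record that $T_{ij}^5=0$ for all $i,j$: as $\xi$ is horizontal, (\ref{HH1}) gives $\mathcal{T}_{e_i}\xi=0$, and the skew-symmetry (\ref{4b}) forces $g(\mathcal{T}_{e_i}e_j,\xi)=0$.

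Since $e_5=\xi$ contributes nothing, the mean curvature is $H=\frac{1}{2}(\mathcal{T}_{e_1}e_1+\mathcal{T}_{e_2}e_2)$, so $\|H\|^2=\frac{1}{4}\bigl[(T_{11}^3+T_{22}^3)^2+(T_{11}^4+T_{22}^4)^2\bigr]$; the prefactor $\frac{1}{4}=\frac{1}{(\dim\ker F_\ast)^2}$ is precisely what produces the constant $\frac{1}{4}$ in (\ref{horinequ}). To address the equality case I would also extract the auxiliary relations $T_{11}^4=T_{12}^3$ and $T_{22}^3=T_{12}^4$ by evaluating the structure identity (\ref{F}) on the pairs $(e_1,e_1)$ and $(e_2,e_2)$ and pairing the outcomes with $e_2$ and $e_1$ respectively, exactly in the manner that produced (\ref{bilal}) in the proof of Theorem \ref{VERTICAL}.

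Finally I would subtract. Using $T_{ij}^5=0$, the difference $\|H\|^2-\frac{1}{4}\bigl(\hat\tau(p)-\frac{c}{4}(1+3\cos^2\theta)\bigr)$ reduces, up to the factor $\frac{1}{4}$, to $\sum_{\alpha=3}^{4}\bigl((T_{11}^\alpha)^2+T_{11}^\alpha T_{22}^\alpha+(T_{22}^\alpha)^2\bigr)+(T_{12}^3)^2+(T_{12}^4)^2$, which is manifestly nonnegative by the elementary estimate $a^2+ab+b^2\ge0$; this establishes (\ref{horinequ}). Equality forces each quadratic $a^2+ab+b^2$ to vanish and each square $(T_{12}^\alpha)^2$ to be zero, so that all $T^\alpha_{ij}$ vanish; combined with the auxiliary relations this is recorded as $T_{11}^4=-T_{22}^4$, $T_{11}^3=T_{12}^3=T_{22}^3=0=T_{ij}^5$. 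The step I expect to be most delicate is the curvature bookkeeping: computing $g(\phi e_i,e_j)$ correctly through the slant decomposition and keeping the index pattern of the $T^\alpha_{ij}$ straight, so that the lone sectional term $\hat K(e_1\wedge e_2)$ reproduces both the curvature constant and the exact quadratic combination needed for the sharp bound.
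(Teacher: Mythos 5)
Your proposal is correct and follows the same skeleton as the paper's proof: the same slant orthonormal basis from Lemma \ref{HOR4} and Corollary \ref{HOR3C}, the reduction $\hat{\tau}(p)=\hat{K}(e_{1}\wedge e_{2})$ forced by $\dim(\ker F_{\ast})=2$, the computation $K(e_{1}\wedge e_{2})=\frac{c}{4}(1+3\cos ^{2}\theta )$ from (\ref{CURVATURE}), O'Neill's equation (\ref{4E}), and the vanishing $T_{ij}^{5}=0$ coming from (\ref{HH1}) and (\ref{4b}). Where you genuinely diverge is the endgame. The paper rotates the frame so that $e_{4}$ is parallel to $H(p)$ (which gives $T_{11}^{3}+T_{22}^{3}=0$), uses the single auxiliary relation $T_{22}^{3}=T_{12}^{4}$ obtained from (\ref{F}) to rewrite $(T_{12}^{4})^{2}$ as $(T_{22}^{3})^{2}$, and then invokes $(\mu +\lambda )^{2}\geq \lambda \mu $. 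You instead keep the frame fixed, write $\Vert H\Vert ^{2}=\frac{1}{4}[(T_{11}^{3}+T_{22}^{3})^{2}+(T_{11}^{4}+T_{22}^{4})^{2}]$, and check directly that $4\Vert H\Vert ^{2}-(\hat{\tau}-\frac{c}{4}(1+3\cos ^{2}\theta ))=\sum_{\alpha =3}^{4}[(T_{11}^{\alpha })^{2}+T_{11}^{\alpha }T_{22}^{\alpha }+(T_{22}^{\alpha })^{2}+(T_{12}^{\alpha })^{2}]\geq 0$; with this bookkeeping the auxiliary relations you extract from (\ref{F}) are not actually needed for either the inequality or the equality analysis, so you could drop that step. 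Your route is more economical and, more importantly, it isolates the precise equality condition: every $T_{ij}^{\alpha }$ vanishes at $p$, i.e. the fibre is totally geodesic there. Be aware that this is strictly stronger than the condition $T_{11}^{4}=-T_{22}^{4}$, $T_{11}^{3}=T_{12}^{3}=T_{22}^{3}=0=T_{ij}^{5}$ printed in the theorem, which taken literally would allow $T_{11}^{4}=-T_{22}^{4}\neq 0$, a case in which the inequality is strict since then $\Vert H\Vert ^{2}=0$ while $\hat{\tau}-\frac{c}{4}(1+3\cos ^{2}\theta )=-(T_{11}^{4})^{2}<0$; your computation shows the equality case should also record $T_{11}^{4}=T_{22}^{4}=0$.
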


\begin{proof}
By Corollary \ref{HOR3C} and Lemma \ref{HOR4}, we construct a slant
orthonormal basis $\left\{ e_{1},e_{2},e_{3},e_{4},e_{5}\right\} $ defined
by 
\begin{equation}
e_{1},e_{2}=\sec \theta \psi e_{1},e_{3}=\csc \theta we_{1},e_{4}=\csc
\theta we_{2},e_{5}=\xi  \label{pos1}
\end{equation}%
Let $\hat{\tau}$ be scalar curvature of a fibre $F^{-1}(q)$. We choose
arbitrary point $p$ of the fibre $F^{-1}(q).$ Since $\dim (KerF_{\ast })=2$,
we obtain%
\begin{equation}
\hat{\tau}(p)=\hat{K}(e_{1}\wedge e_{2}).  \label{pos2}
\end{equation}%
By (\ref{4D}), (\ref{4E}), (\ref{HH1} $(i)$) and (\ref{CURVATURE}) we get 
\begin{equation}
\hat{K}(e_{1}\wedge e_{2})=\frac{c}{4}(1+3\cos ^{2}\theta
)+T_{11}^{3}T_{22}^{3}+T_{11}^{4}T_{22}^{4}-(T_{12}^{3})^{2}-(T_{12}^{4})^{2},
\label{POS3}
\end{equation}%
where $T_{ij}^{\alpha }=g(\mathcal{T}(e_{i},e_{j}),e_{\alpha })$ for $1\leq
i,j\leq 2$ and $\alpha =3,4,5.$ Using Theorem \ref{HOR1} \ and the relation (%
\ref{H4}) one has 
\begin{equation}
\psi e_{2}=-\cos \theta e_{1}\text{ \ \ and \ \ }\omega e_{2}=\sin \theta
e_{5}.  \label{POS4}
\end{equation}%
From (\ref{F}) we have%
\begin{equation*}
g((\hat{\nabla}_{e_{2}}\psi )e_{2},e_{1})=g(\mathcal{BT}%
_{e_{2}}e_{2},e_{1})-g(\mathcal{T}_{e_{2}}\omega e_{2},e_{1}).
\end{equation*}%
Using (\ref{POS4}) in the last relation we obtain 
\begin{equation}
\sin \theta \lbrack g(\mathcal{T}_{e_{2}}e_{2},e_{3})-g(\mathcal{T}%
_{e_{2}}e_{1},e_{4})]=0.  \label{POS5}
\end{equation}%
Since our submersion is proper, the equation (\ref{POS5}) implies%
\begin{equation*}
T_{22}^{3}=T_{12}^{4}.
\end{equation*}%
Because of $\mathcal{T}_{e_{\alpha }}\xi =0,$ we can choose the unit normal
vector $e_{4}\in \Gamma (\ker (F_{\ast }))^{\perp }$ parallel to the mean
curvature vector $H(p)$ of fibre. Then we have 
\begin{equation*}
\Vert H(p)\Vert ^{2}=\frac{1}{4}(T_{11}^{4}+T_{22}^{4})^{2},\text{ \ \ }%
T_{11}^{3}+T_{22}^{3}=0.
\end{equation*}%
So the relation (\ref{POS3}) becomes 
\begin{equation}
\hat{K}(e_{1}\wedge e_{2})=\frac{c}{4}(1+3\cos ^{2}\theta
)+T_{11}^{4}T_{22}^{4}-(T_{11}^{3})^{2}-(T_{12}^{3})^{2}-(T_{22}^{3})^{2}.
\label{POS6}
\end{equation}%
From the trivial inequality $(\mu +\lambda )^{2}\geq 0$ , one has $(\mu
+\lambda )^{2}\geq \lambda \mu $. Putting $\mu =T_{11}^{4}$ and $\lambda
=T_{22}^{4}$ in the last inequality we find 
\begin{equation}
\Vert H\Vert ^{2}\geq \frac{1}{4}[\hat{K}(e_{1}\wedge e_{2})-\frac{c}{4}%
(1+3\cos ^{2}\theta )].  \label{POS7}
\end{equation}%
By (\ref{pos2}) and the last relation \ we get required inequality.
Moreover, the equality sign of (\ref{horinequ}) holds at a point $p$ of a
fiber if and only if $T_{11}^{4}=-T_{22}^{4},$ \ \ $%
T_{11}^{3}=T_{12}^{3}=T_{22}^{3}=0=T_{ij}^{5}.$
\end{proof}

Recently H.Tastan \cite{TAS} proved that the horizontal distribution of a
Lagrangian submersion from a Kaehlerian manifold to a Riemannian manifold is
integrable and totally geodesic. He also showed that a such submersion is
totally geodesic if and only if it has totally geodesic fibres.

Anti-invariant submersions are special slant submersions with slant angle $%
\theta =\frac{\pi }{4}.$ Now we focus on anti-invariant submersions from a
cosymplectic manifold to a Riemannian manifold such that $(\ker F_{\ast
})^{\bot }$ =$\phi ($ $\ker (F_{\ast }))\oplus \{\xi \}$. In this case we
note that $\ker (F_{\ast })=\phi ((\ker F_{\ast })^{\bot }).$ The authors
investigated such a submersions in \cite{KM}. We will give some additional
results.

By help of (\ref{HHH}) and (\ref{1})-(\ref{4}) we give

\begin{lemma}
\label{HOR12}Let $F$ be a Riemannian submersion from a cosymplectic manifold 
$M(\phi ,\xi ,\eta ,g_{M})$ onto a Riemannian manifold $(N,g_{N})$ such that 
$(\ker F_{\ast })^{\bot }$ = $\phi (\ker (F_{\ast }))\oplus \{\xi \}$. Then%
\begin{equation}
i)\mathcal{T}_{U}\phi E=\phi \mathcal{T}_{U}E\text{ \ \ \ \ \ and \ \ \ \ }%
ii)\mathcal{A}_{X}\phi E=\phi \mathcal{A}_{X}E,  \label{H11}
\end{equation}%
for any $U\in \Gamma (\ker F_{\ast }),X\in \Gamma ((\ker F_{\ast })^{\bot })$
and $E$ $\in \Gamma (TM).$
\end{lemma}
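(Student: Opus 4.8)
We want to prove Lemma \ref{HOR12}, which asserts that in the anti-invariant case with $(\ker F_{\ast})^{\bot} = \phi(\ker F_{\ast}) \oplus \{\xi\}$, the O'Neill tensors commute with $\phi$ in the sense of \eqref{H11}. Let me plan the proof.

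The plan is to exploit the structural hypothesis in the form it is actually used: since $(\ker F_{\ast})^{\bot} = \phi(\ker F_{\ast}) \oplus \{\xi\}$, the horizontal distribution splits into $\phi(\ker F_{\ast})$ together with the span of $\xi$, and dually $\ker(F_{\ast}) = \phi((\ker F_{\ast})^{\bot})$. Combined with \eqref{HHH}, which tells us $\phi^{2}U = -U$ and $(\nabla_{U}\phi)V = 0$ on vertical fields, this means $\phi$ maps the vertical distribution isomorphically onto $\phi(\ker F_{\ast})$ and is compatible with the decompositions \eqref{1}--\eqref{4}.

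The key computation for part $(i)$ starts from the cosymplectic structure equation \eqref{nambla}, $(\nabla_E \phi)G = 0$, which gives $\nabla_E \phi G = \phi \nabla_E G$ for all $E, G$. I would apply this with $E = U$ vertical and $G = E$ an arbitrary tangent field, obtaining $\nabla_U \phi E = \phi \nabla_U E$. The main obstacle—and the heart of the lemma—is to show that applying the projection operators $\mathcal{H}$ and $\mathcal{V}$ commutes with $\phi$ in the relevant places, so that the $\mathcal{T}$-component of the left side matches $\phi$ applied to the $\mathcal{T}$-component of the right side. Concretely, recalling from \eqref{1} and \eqref{2} that $\mathcal{T}_U E = \mathcal{H}\nabla_U \mathcal{V}E + \mathcal{V}\nabla_U \mathcal{H}E$ picks out the ``component-swapping'' part of $\nabla_U E$, I would verify that under the hypothesis $(\ker F_{\ast})^{\bot} = \phi(\ker F_{\ast}) \oplus \{\xi\}$ the endomorphism $\phi$ interchanges $\ker F_{\ast}$ and its orthogonal complement modulo $\xi$ (using $\phi\xi = 0$ from \eqref{fi}), so that $\mathcal{H}\phi = \phi\mathcal{V}$ and $\mathcal{V}\phi = \phi\mathcal{H}$ on the appropriate subspaces. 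Substituting these commutation relations into $\nabla_U \phi E = \phi \nabla_U E$ and extracting the tensor $\mathcal{T}$ via \eqref{AT1} then yields $\mathcal{T}_U \phi E = \phi \mathcal{T}_U E$.

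Part $(ii)$ follows by the same mechanism with $U$ replaced by a horizontal field $X$: again $(\nabla_X \phi)E = 0$ gives $\nabla_X \phi E = \phi \nabla_X E$, and extracting the horizontal integrability tensor $\mathcal{A}$ via \eqref{AT2}, $\mathcal{A}_X E = \mathcal{V}\nabla_{\mathcal{H}X}\mathcal{H}E + \mathcal{H}\nabla_{\mathcal{H}X}\mathcal{V}E$, together with the same projection-commutation relations $\mathcal{H}\phi = \phi\mathcal{V}$ and $\mathcal{V}\phi = \phi\mathcal{H}$, produces $\mathcal{A}_X \phi E = \phi \mathcal{A}_X E$. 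I expect the routine part to be the bookkeeping of which projector acts where; the conceptual crux, which I would state and justify carefully once, is precisely that under the anti-invariant splitting $\phi$ swaps $\mathcal{V}$ and $\mathcal{H}$ (off the $\xi$-line) so that $\phi$ commutes with $\nabla_E$ by \eqref{nambla} while simultaneously intertwining the two projectors—making both identities fall out uniformly from the single relation $\nabla_E \phi = \phi \nabla_E$.
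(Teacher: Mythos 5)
Your proposal is correct and follows exactly the route the paper intends: the paper states the lemma with only the citation ``by help of (\ref{HHH}) and (\ref{1})--(\ref{4})'', and your argument --- using $\nabla_{E}\phi G=\phi \nabla_{E}G$ from (\ref{nambla}) together with the fact that under $(\ker F_{\ast })^{\bot }=\phi (\ker F_{\ast })\oplus \{\xi \}$ the map $\phi $ interchanges the two distributions so that $\mathcal{H}\phi =\phi \mathcal{V}$ and $\mathcal{V}\phi =\phi \mathcal{H}$, then reading off the component-swapping parts in (\ref{AT1}) and (\ref{AT2}) --- is precisely the computation being invoked. The one point worth writing out explicitly is the verification that $\phi $ carries $(\ker F_{\ast })^{\bot }$ into $\ker F_{\ast }$ (writing $X=\phi V+a\xi $ and using $\phi ^{2}V=-V$, $\phi \xi =0$), which you correctly flag as the conceptual crux.
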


By (\ref{TUW2}) and (\ref{H11}) we obtain following Corollary.

\begin{corollary}
\label{HOR13}Let $F$ be a Riemannian submersion from a cosymplectic manifold 
$M(\phi ,\xi ,\eta ,g_{M})$ onto a Riemannian manifold $(N,g_{N})$ such that 
$(\ker F_{\ast })^{\bot }$ = $\phi (\ker (F_{\ast }))\oplus \{\xi \}$. Then,
for any $X,Y\in \Gamma ((\ker F_{\ast })^{\bot }),$we have 
\begin{equation}
\mathcal{A}_{X}\phi Y=-\mathcal{A}_{Y}\phi X.  \label{H11A}
\end{equation}
\end{corollary}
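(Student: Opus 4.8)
The plan is to derive the antisymmetry directly from the two ingredients the corollary cites, namely the intertwining identity (\ref{H11})(ii) of Lemma \ref{HOR12} and the skew-symmetry (\ref{TUW2}) of $\mathcal{A}$ on horizontal fields. First I would note that under the standing hypothesis $(\ker F_{\ast })^{\bot }=\phi (\ker F_{\ast })\oplus \{\xi \}$, the text has already recorded $\ker F_{\ast }=\phi ((\ker F_{\ast })^{\bot })$, so for horizontal $X,Y$ the vectors $\phi X,\phi Y$ are vertical; this makes the expressions $\mathcal{A}_{X}\phi Y$ and $\mathcal{A}_{Y}\phi X$ meaningful as the action of $\mathcal{A}$ on a horizontal argument (its subscript) and a vertical argument, with horizontal output. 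This placement check is reassuring but strictly inessential, since (\ref{H11})(ii) is stated for every $E\in \Gamma (TM)$.

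The main computation is a three-line chain. I would apply (\ref{H11})(ii) with the ambient field $E$ taken to be the horizontal field $Y$, which yields
\begin{equation*}
\mathcal{A}_{X}\phi Y=\phi \mathcal{A}_{X}Y.
\end{equation*}
Next I would invoke the skew-symmetry $\mathcal{A}_{X}Y=-\mathcal{A}_{Y}X$ from (\ref{TUW2}), valid because both $X$ and $Y$ are horizontal, and then apply (\ref{H11})(ii) a second time with the roles of the two fields exchanged:
\begin{equation*}
\phi \mathcal{A}_{X}Y=-\phi \mathcal{A}_{Y}X=-\mathcal{A}_{Y}\phi X.
\end{equation*}
Concatenating these gives $\mathcal{A}_{X}\phi Y=-\mathcal{A}_{Y}\phi X$, which is exactly (\ref{H11A}).

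There is essentially no genuine obstacle here: the result is a formal consequence of commuting $\phi$ past $\mathcal{A}$ and then using the existing antisymmetry of $\mathcal{A}$ on the horizontal distribution. The only point deserving a moment's care is that $\mathcal{A}_{\bullet }$ is applied with a horizontal field in its subscript in each use of (\ref{H11})(ii) (so that the lemma genuinely applies), which holds since $X$ and $Y$ are horizontal by hypothesis. Because $\phi$ is a pointwise endomorphism it passes through the $-1$ without fuss, so no further justification is needed.
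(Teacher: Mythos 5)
Your proof is correct and follows exactly the route the paper intends: the corollary is stated as a consequence of (\ref{TUW2}) and (\ref{H11}), and your chain $\mathcal{A}_{X}\phi Y=\phi \mathcal{A}_{X}Y=-\phi \mathcal{A}_{Y}X=-\mathcal{A}_{Y}\phi X$ is precisely that derivation. No issues.
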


\begin{theorem}
\label{HOR14}Let $F$ be a Riemannian submersion from a cosymplectic manifold 
$M(\phi ,\xi ,\eta ,g_{M})$ onto a Riemannian manifold $(N,g_{N})$ such that 
$(\ker F_{\ast })^{\bot }$ = $\phi (\ker (F_{\ast }))\oplus \{\xi \}$. Then
the horizontal distribution $(\ker F_{\ast })^{\bot }$ is integrable and
totally geodesic.
\end{theorem}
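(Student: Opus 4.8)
The plan is to reduce both assertions—integrability of $(\ker F_{\ast})^{\bot}$ and total geodesy of its leaves—to the single identity $\mathcal{A}_{X}Y=0$ for all $X,Y\in\Gamma((\ker F_{\ast})^{\bot})$. For horizontal $X,Y$ one has $\mathcal{A}_{X}Y=\mathcal{V}\nabla_{X}Y$ from (\ref{AT2}), so by Lemma \ref{T}(ii) the vertical part of $[X,Y]$ is $2\mathcal{A}_{X}Y$; hence $\mathcal{A}_{X}Y=0$ forces integrability, while $\mathcal{V}\nabla_{X}Y=\mathcal{A}_{X}Y=0$ says the second fundamental form of each horizontal leaf vanishes, i.e.\ the leaves are totally geodesic. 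Thus everything comes down to showing that $\mathcal{A}$ vanishes on the horizontal distribution.

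First I would dispose of the $\xi$-direction. Writing $(\ker F_{\ast})^{\bot}=\phi(\ker F_{\ast})\oplus\{\xi\}$ and using that $\mathcal{A}$ is tensorial in both slots, it suffices to treat $X=\phi U$, $Y=\phi V$ with $U,V\in\Gamma(\ker F_{\ast})$, since the cross terms involving $\xi$ die: $\mathcal{A}_{X}\xi=0$ by (\ref{HH1})(iii) and $\mathcal{A}_{\xi}Y=-\mathcal{A}_{Y}\xi=0$ by Lemma \ref{T}(ii). I will also use repeatedly the pointwise skew-symmetry $g_{M}(\phi E,G)=-g_{M}(E,\phi G)$, which follows from (\ref{metric}) together with $\eta\circ\phi=0$.

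The core would be to analyze the trilinear expression $P(U,V,W):=g_{M}(\mathcal{A}_{\phi U}V,\phi W)$ for $U,V,W\in\Gamma(\ker F_{\ast})$. Using Lemma \ref{HOR12}(ii) in the form $\mathcal{A}_{\phi U}\phi V=\phi\,\mathcal{A}_{\phi U}V$, the skew-symmetry of $\phi$, and the skew-symmetry of $\mathcal{A}_{\phi U}$ from (\ref{4c}), one finds
\begin{equation*}
g_{M}(\mathcal{A}_{\phi U}\phi V,W)=-P(U,V,W)=-P(U,W,V),
\end{equation*}
so that $P$ is symmetric in its last two arguments. On the other hand Lemma \ref{T}(ii) gives $\mathcal{A}_{\phi U}\phi V=-\mathcal{A}_{\phi V}\phi U$, which translates into $P(U,V,W)=-P(V,U,W)$, i.e.\ $P$ is antisymmetric in its first two arguments. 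A tensor that is symmetric in one adjacent pair of slots and antisymmetric in another adjacent pair must vanish (the standard six-term cyclic identity), so $P\equiv 0$, whence $\mathcal{A}_{\phi U}\phi V=0$. Together with the $\xi$-reduction this yields $\mathcal{A}_{X}Y=0$ on all of $(\ker F_{\ast})^{\bot}$, proving both claims.

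The main obstacle is precisely this final algebraic step. The naive route—expanding $g_{M}(\mathcal{A}_{X}Y,W)=g_{M}(\nabla_{X}Y,W)$ and integrating by parts with $\nabla\phi=0$ and $\nabla\xi=0$—closes up into a tautology $a=a$ and proves nothing. What rescues the argument is that $P$ carries two \emph{genuinely independent} symmetries, one coming from Lemma \ref{HOR12}(ii) combined with (\ref{4c}) and one from the antisymmetry of $\mathcal{A}$, and these are incompatible unless $P$ vanishes. I would therefore take care to keep the two symmetries distinct, and in particular to check that the chain producing the ``symmetric in the last two slots'' relation really invokes Lemma \ref{HOR12}(ii) a second time rather than silently re-deriving the antisymmetry already supplied by Lemma \ref{T}(ii).
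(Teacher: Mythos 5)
Your proposal is correct and is essentially the paper's own argument: the paper's proof is exactly the six-step chain of equalities (using Lemma \ref{HOR12}, Corollary \ref{HOR13}, Lemma \ref{T}(ii) and (\ref{4c})) that your ``symmetric in the last two slots, antisymmetric in the first two'' packaging of $P$ unrolls into, terminating in $g_{M}(\mathcal{A}_{X}\phi Y,Z)=-g_{M}(\mathcal{A}_{X}\phi Y,Z)$. The only cosmetic difference is that the paper pushes on to the full statement $\mathcal{A}\equiv 0$ and invokes \cite{KM} for total geodesy, whereas you read both conclusions directly off $\mathcal{A}_{X}Y=0$ for horizontal $X,Y$.
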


\begin{proof}
Since tensor field $\mathcal{A=A}_{H}$, it is sufficient to show that $%
\mathcal{A}_{X}=0$ for any $X\in \Gamma ((\ker F_{\ast })^{\bot })$. \ If $\
Y$ and $Z$ are horizontal vector fields on $M,$ we have 
\begin{equation*}
g_{M}(\mathcal{A}_{X}\phi Y,Z)\overset{(\ref{H11A})}{=}-g_{M}(\mathcal{A}%
_{Y}\phi X,Z)\overset{(\ref{H11})}{=}-g_{M}(\phi \mathcal{A}_{Y}X,Z)
\end{equation*}%
\begin{equation*}
\overset{\phi \text{ }anti-sym}{=}g_{M}(\mathcal{A}_{Y}X,\phi Z)\overset{(%
\ref{TUW2})}{=}-g_{M}(\mathcal{A}_{X}Y,\phi Z)\overset{(\ref{4c})}{=}g_{M}(%
\mathcal{A}_{X}\phi Z,Y)
\end{equation*}%
\begin{equation*}
\overset{(\ref{H11A})}{=}-g_{M}(\mathcal{A}_{Z}\phi X,Y)\overset{(\ref{4c})}{%
=}g_{M}(\mathcal{A}_{Z}Y,\phi X)\overset{(\ref{TUW2})}{=}-g_{M}(\mathcal{A}%
_{Y}Z,\phi X)
\end{equation*}%
\begin{equation*}
\overset{(\ref{4c})}{=}g_{M}(\mathcal{A}_{Y}\phi X,Z)\overset{(\ref{H11A})}{=%
}-g_{M}(\mathcal{A}_{X}\phi Y,Z)
\end{equation*}%
So we get 
\begin{equation}
\mathcal{A}_{X}\phi Y=0  \label{H13}
\end{equation}%
which implies $\phi \mathcal{A}_{X}Y=0.$ By (\ref{fi}) we obtain 
\begin{equation}
\mathcal{A}_{X}Y=-g_{M}(\mathcal{A}_{X}Y,\xi )\xi =g_{M}(\mathcal{A}_{X}\xi
,Y)\xi =0.  \label{H14}
\end{equation}%
Since $U$ is a vertical vector field on $M$, $\phi U$ will be horizontal
vector field on $M$. Therefore we obtain $\mathcal{A}_{X}\phi U=0$. Using (%
\ref{fi}) and (\ref{H11}) we have \ 
\begin{equation}
\mathcal{A}_{X}U=0  \label{H15}
\end{equation}%
By virtue of (\ref{H14}) and (\ref{H15}) we get $\mathcal{A}_{X}=0$. The
fact that $(\ker F_{\ast })^{\bot }$ is totally geodesic is obvious from (%
\ref{H13}) and (see: \cite{KM}, Corollary 5).
\end{proof}

Using Theorem (\ref{H14}) and (see: \cite{KM}, Corollary 6 and Theorem 15)
we obtain following Theorem.

\begin{theorem}
\label{HOR15}Let $F$ be a Riemannian submersion from a cosymplectic manifold 
$M(\phi ,\xi ,\eta ,g_{M})$ onto a Riemannian manifold $(N,g_{N})$ such that 
$(\ker F_{\ast })^{\bot }$ = $\phi (\ker (F_{\ast }))\oplus \{\xi \}$. Then $%
F$ is a totally geodesic map if and only if it has totally geodesic fibers.
\end{theorem}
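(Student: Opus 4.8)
The plan is to test total geodesy of $F$ directly against the definition $(\nabla F_{\ast})(E,G)=0$ for all $E,G\in\Gamma(TM)$, splitting the pair $(E,G)$ into the three natural types according to the decomposition $TM=\ker F_{\ast}\oplus(\ker F_{\ast})^{\perp}$. Because $F$ is a Riemannian submersion, the horizontal--horizontal component is already settled: (\ref{5a}) gives $(\nabla F_{\ast})(X,Y)=0$ for every $X,Y\in\Gamma((\ker F_{\ast})^{\perp})$. Hence it remains only to show that the mixed component $(\nabla F_{\ast})(X,U)$ and the vertical component $(\nabla F_{\ast})(U,V)$ vanish precisely when the fibres are totally geodesic, i.e. when $\mathcal{T}\equiv 0$.

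For the mixed component I would apply (\ref{5}) together with $F_{\ast}U=0$ to get $(\nabla F_{\ast})(X,U)=-F_{\ast}(\nabla_{X}U)$, and then use the decomposition (\ref{3}), $\nabla_{X}U=\mathcal{A}_{X}U+\mathcal{V}\nabla_{X}U$. Since $F_{\ast}$ annihilates vertical vectors, only the horizontal term survives, leaving $(\nabla F_{\ast})(X,U)=-F_{\ast}(\mathcal{A}_{X}U)$. The key observation is that, under the hypothesis $(\ker F_{\ast})^{\perp}=\phi(\ker F_{\ast})\oplus\{\xi\}$, Theorem \ref{HOR14} guarantees $\mathcal{A}_{X}=0$ for every horizontal $X$; therefore the mixed component vanishes identically, independently of whether the fibres are totally geodesic. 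This is the feature that makes the equivalence clean for this particular class of submersions.

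For the vertical component, again by (\ref{5}) and $F_{\ast}U=F_{\ast}V=0$ I would obtain $(\nabla F_{\ast})(U,V)=-F_{\ast}(\nabla_{U}V)$, and substituting (\ref{1}), $\nabla_{U}V=\mathcal{T}_{U}V+\hat{\nabla}_{U}V$, the vertical part $\hat{\nabla}_{U}V$ is killed by $F_{\ast}$, giving $(\nabla F_{\ast})(U,V)=-F_{\ast}(\mathcal{T}_{U}V)$. Because $F_{\ast}$ is a linear isometry on the horizontal distribution (hence injective there) and $\mathcal{T}_{U}V$ is horizontal, this expression vanishes if and only if $\mathcal{T}_{U}V=0$ for all vertical $U,V$. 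As recalled in Section 2, $\mathcal{T}\equiv 0$ is exactly the condition that the fibres are totally geodesic.

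Combining the three cases, $(\nabla F_{\ast})$ vanishes identically if and only if $\mathcal{T}\equiv 0$, which proves that $F$ is a totally geodesic map if and only if it has totally geodesic fibres. The only nontrivial input is Theorem \ref{HOR14}, which removes the mixed obstruction by forcing $\mathcal{A}=0$; once that is in hand, the argument is routine bookkeeping through the O'Neill decomposition. I expect the verification of the mixed case --- confirming that no residual horizontal term beyond $\mathcal{A}_{X}U$ contributes --- to be the only point requiring care, and it is handled entirely by Theorem \ref{HOR14}.
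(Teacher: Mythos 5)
Your argument is correct and rests on the same essential input as the paper's: Theorem \ref{HOR14}, which forces $\mathcal{A}=0$ and thus kills the mixed component of $\nabla F_{\ast}$, leaving $(\nabla F_{\ast})(U,V)=-F_{\ast}(\mathcal{T}_{U}V)$ as the only obstruction. The paper merely cites Theorem \ref{HOR14} together with results from \cite{KM} rather than writing out the three-case bookkeeping, so your proposal is essentially a self-contained version of the same proof.
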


Since horizontal distribution for a slant Riemannian submersion from a
cosymplectic manifold $M(\phi ,\xi ,\eta ,g_{M})$ onto a Riemannian manifold 
$(N,g_{N})$ with $(\ker F_{\ast })^{\bot }$ = $\phi (\ker (F_{\ast }))\oplus
\{\xi \}$ is integrable, the equation (\ref{4F}) reduces to 
\begin{equation}
R(Y,W,V,X)=g_{M}((\nabla _{X}\mathcal{T})(V,W),Y)-g_{M}(\mathcal{T}_{V}X,%
\mathcal{T}_{W}Y)  \label{H16}
\end{equation}%
for any $X,Y,Z\in \Gamma ((\ker F_{\ast })^{\bot })$, $V,W\in \Gamma (\ker
F_{\ast })$.

From (\ref{HOLOMORPHIC}) and (\ref{H16}) we give

\begin{theorem}
\label{HOR16}Let $F$ be a Riemannian submersion from a cosymplectic manifold 
$M(\phi ,\xi ,\eta ,g_{M})$ onto a Riemannian manifold $(N,g_{N})$ such that 
$(\ker F_{\ast })^{\bot }$ = $\phi (\ker (F_{\ast }))\oplus \{\xi \}$. Then
the $\phi $-sectional curvature $H$ of $M$ satisfies 
\begin{equation}
H(V)=g_{M}((\nabla _{\phi V}\mathcal{T})(V,V),\phi V)-g_{M}(\mathcal{T}_{V}V,%
\mathcal{T}_{V}V),  \label{H17}
\end{equation}%
\begin{equation}
H(X)=g_{M}((\nabla _{X}\mathcal{T})(\phi X,\phi X),X)-g_{M}(\mathcal{T}%
_{\phi X}X,\mathcal{T}_{\phi X}X),  \label{H18}
\end{equation}%
for any $X,Y,Z\in \Gamma ((\ker F_{\ast })^{\bot })$, $V,W\in \Gamma (\ker
F_{\ast })$.
\end{theorem}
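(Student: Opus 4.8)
The plan is to read both identities off the reduced curvature formula \eqref{H16}, which is available in this setting precisely because Theorem \ref{HOR14} tells us that $(\ker F_{\ast})^{\bot}$ is integrable and totally geodesic, so that the $\mathcal{A}$-terms in \eqref{4F} drop out. The one structural fact I would use throughout is that, under the decomposition $(\ker F_{\ast})^{\bot}=\phi(\ker F_{\ast})\oplus\{\xi\}$, the endomorphism $\phi$ swaps the vertical distribution with the part of the horizontal distribution orthogonal to $\xi$: if $V\in\Gamma(\ker F_{\ast})$ then $\phi V$ is horizontal, while if $X\in\Gamma((\ker F_{\ast})^{\bot})$ is orthogonal to $\xi$ then $\phi^{2}X=-X+\eta(X)\xi=-X$ shows $\phi X$ is vertical. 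Combining this with the definition \eqref{HOLOMORPHIC}, I would write each $\phi$-sectional curvature as a four-index tensor $R(E,\phi E,\phi E,E)$ and then match it against the index pattern (horizontal, vertical, vertical, horizontal) demanded by \eqref{H16}.

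For \eqref{H18} this matching is immediate and requires no further work. Taking $X\in\Gamma((\ker F_{\ast})^{\bot})$ orthogonal to $\xi$, the vector $X$ is horizontal and $\phi X$ is vertical, so $H(X)=R(X,\phi X,\phi X,X)$ already has the required pattern. Substituting $Y=X$ and $W=V=\phi X$ into \eqref{H16} yields \eqref{H18} verbatim.

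For \eqref{H17} the two distributions play opposite roles — $V$ is vertical and $\phi V$ is horizontal — so $R(V,\phi V,\phi V,V)$ carries the pattern (vertical, horizontal, horizontal, vertical), which does not directly fit \eqref{H16}. First I would apply the pair symmetry $R(A,B,C,D)=R(C,D,A,B)$ of the Riemann tensor to rewrite $H(V)=R(V,\phi V,\phi V,V)=R(\phi V,V,V,\phi V)$, which now fits \eqref{H16} with the horizontal slots filled by $\phi V$ and the two vertical slots by $V$. This produces
\[
H(V)=g_{M}((\nabla_{\phi V}\mathcal{T})(V,V),\phi V)-g_{M}(\mathcal{T}_{V}\phi V,\mathcal{T}_{V}\phi V).
\]
The first term already agrees with the target, so the whole argument reduces to the single identity $g_{M}(\mathcal{T}_{V}\phi V,\mathcal{T}_{V}\phi V)=g_{M}(\mathcal{T}_{V}V,\mathcal{T}_{V}V)$.

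This last equality is the only genuinely computational step, and I expect it to be the main (albeit minor) obstacle. The plan is to invoke Lemma \ref{HOR12}(i), namely $\mathcal{T}_{V}\phi V=\phi\mathcal{T}_{V}V$, together with the compatibility relation \eqref{metric}, to obtain $g_{M}(\phi\mathcal{T}_{V}V,\phi\mathcal{T}_{V}V)=g_{M}(\mathcal{T}_{V}V,\mathcal{T}_{V}V)-\eta(\mathcal{T}_{V}V)^{2}$. It then remains to verify that $\mathcal{T}_{V}V$ has no $\xi$-component. This follows from the skew-symmetry \eqref{4b}, which gives $g_{M}(\mathcal{T}_{V}V,\xi)=-g_{M}(\mathcal{T}_{V}\xi,V)$, combined with $\mathcal{T}_{V}\xi=0$ from \eqref{HH1}; hence $\eta(\mathcal{T}_{V}V)=0$, the two $\mathcal{T}$-terms coincide, and \eqref{H17} follows.
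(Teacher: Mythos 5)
Your proposal is correct and follows essentially the route the paper intends: the paper derives Theorem \ref{HOR16} by substituting the $\phi$-sectional pairs directly into the reduced formula \eqref{H16} (available because $\mathcal{A}=0$ by Theorem \ref{HOR14}), exactly as you do. Your extra verification for \eqref{H17} --- using the pair symmetry of $R$, Lemma \ref{HOR12}(i), and $\eta(\mathcal{T}_{V}V)=0$ from \eqref{4b} and \eqref{HH1} to show $g_{M}(\mathcal{T}_{V}\phi V,\mathcal{T}_{V}\phi V)=g_{M}(\mathcal{T}_{V}V,\mathcal{T}_{V}V)$ --- is a detail the paper leaves implicit, and you supply it correctly.
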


It is well known from \cite{ESC} that if the tensor field $\mathcal{T}$ is
parallel, i.e.$\nabla \mathcal{T}=0$ for a Riemannian submersion. then $%
\mathcal{T}=0$. From Theorem \ref{HOR16} we have

\begin{corollary}
\label{HOR17}Let $F$ be a Riemannian submersion from a cosymplectic manifold 
$M(\phi ,\xi ,\eta ,g_{M})$ onto a Riemannian manifold $(N,g_{N})$ such that 
$(\ker F_{\ast })^{\bot }$ = $\phi (\ker (F_{\ast }))\oplus \{\xi \}$. If
the tensor the tensor field $\mathcal{T}$ is parallel, then the $\phi $%
-sectional curvature $H$ of $M$ vanishes.
\end{corollary}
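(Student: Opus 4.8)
The plan is to reduce everything to the two curvature formulas (\ref{H17}) and (\ref{H18}) supplied by Theorem \ref{HOR16}, and to exploit the rigidity fact recalled just above the statement, namely that a parallel $\mathcal{T}$ on a Riemannian submersion is automatically the zero tensor. So the first step is simply to record the hypothesis: $\mathcal{T}$ parallel means $\nabla \mathcal{T}=0$, and in particular every covariant derivative $(\nabla_{E}\mathcal{T})(\cdot,\cdot)$ appearing in (\ref{H17}) and (\ref{H18}) vanishes identically for all $E\in\Gamma(TM)$.

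Next I would invoke the Escobales-type result (\cite{ESC}) quoted in the paragraph preceding the corollary: for a Riemannian submersion, $\nabla\mathcal{T}=0$ forces $\mathcal{T}=0$. This is the one genuine input, and it is where all the work actually sits; everything else is bookkeeping. Granting it, we now have both $\nabla\mathcal{T}=0$ and $\mathcal{T}=0$ at our disposal.

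Finally I would substitute into the two displayed formulas of Theorem \ref{HOR16}. For a vertical unit field $V\in\Gamma(\ker F_{\ast})$, the first term $g_{M}((\nabla_{\phi V}\mathcal{T})(V,V),\phi V)$ of (\ref{H17}) vanishes because $\nabla\mathcal{T}=0$, and the second term $g_{M}(\mathcal{T}_{V}V,\mathcal{T}_{V}V)$ vanishes because $\mathcal{T}=0$; hence $H(V)=0$. The identical substitution in (\ref{H18}), using $\nabla\mathcal{T}=0$ on the first term and $\mathcal{T}_{\phi X}X=0$ on the second, gives $H(X)=0$ for every horizontal $X\in\Gamma((\ker F_{\ast})^{\bot})$. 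Since in the present setting $(\ker F_{\ast})^{\bot}=\phi(\ker F_{\ast})\oplus\{\xi\}$ and $\phi$ interchanges the two distributions, these two cases exhaust all unit vectors orthogonal to $\xi$, so the $\phi$-sectional curvature $H$ of $M$ vanishes everywhere, which is the assertion. I do not expect any real obstacle beyond correctly citing the parallel-implies-zero lemma for $\mathcal{T}$; the rest is immediate from Theorem \ref{HOR16}.
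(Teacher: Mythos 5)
Your argument is exactly the paper's: the authors prove the corollary by citing Escobales' result that $\nabla\mathcal{T}=0$ forces $\mathcal{T}=0$ for a Riemannian submersion and then reading off $H(V)=H(X)=0$ from the two formulas of Theorem \ref{HOR16}. The only (shared) imprecision is the claim that vertical and horizontal unit vectors exhaust all unit vectors orthogonal to $\xi$ -- a mixed vector is neither -- but the paper makes the identical leap, so your proposal matches its proof.
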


\begin{corollary}
\label{HOR18}Let $M(c\neq 0)$ be a cosymplectic space form . Then there is
no Riemannian submersion $F$ with totally geodesic fibres from a
cosymplectic space form $M(c\neq 0)$ onto a Riemannian manifold $(N,g_{N})$
such that $(\ker F_{\ast })^{\bot }$ = $\phi (\ker (F_{\ast }))\oplus \{\xi
\}$.
\end{corollary}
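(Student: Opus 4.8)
The plan is to argue by contradiction, leveraging Corollary \ref{HOR17} together with the definition of a cosymplectic space form. Suppose, for contradiction, that such a Riemannian submersion $F$ exists with totally geodesic fibres. Recall from Section 2 that the fibres of $F$ are totally geodesic if and only if the O'Neill tensor $\mathcal{T}$ vanishes identically, i.e. $\mathcal{T}=0$. The key (and essentially the only) observation is that if $\mathcal{T}=0$ as a tensor field, then all of its covariant derivatives vanish as well, so in particular $\nabla\mathcal{T}=0$; that is, $\mathcal{T}$ is trivially parallel.

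With $\mathcal{T}$ parallel, I would invoke Corollary \ref{HOR17} directly: under the standing hypothesis $(\ker F_{\ast})^{\bot}=\phi(\ker F_{\ast})\oplus\{\xi\}$, a parallel $\mathcal{T}$ forces the $\phi$-sectional curvature $H$ of $M$ to vanish identically. Alternatively, one can read this off immediately from Theorem \ref{HOR16}: setting $\mathcal{T}=0$ in the formulas (\ref{H17}) and (\ref{H18}) makes the right-hand sides vanish term by term, so $H(V)=0$ and $H(X)=0$ for every admissible unit vector.

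On the other hand, $M=M(c)$ is a cosymplectic space form, which by definition has constant $\phi$-sectional curvature $c$; that is, $H(E)=c$ for every unit vector $E$ orthogonal to $\xi$, as in (\ref{HOLOMORPHIC}) and (\ref{CURVATURE}). Combining this with the conclusion $H\equiv 0$ obtained above yields $c=0$, contradicting the hypothesis $c\neq 0$. Hence no such Riemannian submersion with totally geodesic fibres can exist, which is the assertion of the corollary.

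There is no real analytic obstacle in this argument; the content is entirely in correctly matching the hypotheses to the prior results. The one point that must be stated cleanly, rather than glossed over, is the implication $\mathcal{T}=0 \Rightarrow \nabla\mathcal{T}=0$, so that the totally geodesic condition legitimately falls under the parallel-tensor hypothesis of Corollary \ref{HOR17}; after that, the space-form identity $H\equiv c$ closes the contradiction at once.
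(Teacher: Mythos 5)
Your argument is correct and is precisely the reasoning the paper intends: totally geodesic fibres give $\mathcal{T}=0$, hence $\nabla\mathcal{T}=0$, so Corollary \ref{HOR17} (equivalently, setting $\mathcal{T}=0$ in (\ref{H17})--(\ref{H18})) forces the $\phi$-sectional curvature to vanish, contradicting $H\equiv c\neq 0$. No gaps; this is the same route as the paper.
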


\end{document}